\DeclareMathAlphabet{\mathpzc}{OT1}{pzc}{m}{it}
\theoremstyle{definition}
\newtheorem{theorem}{Theorem}[section]
\newtheorem{proposition}{Proposition}[section]
\newtheorem{remark}{Remark}[section]
\newtheorem{lemma}{Lemma}[section]
\newcommand{\Ld}{\mathbf{L}}
\newcommand{\Hd}{\mathbf{H}}
\newcommand{\hone}{H^1(\Omega)}
\newcommand{\hones}{H^1}
\newcommand{\hzero}{H_0^1(\Omega)}
\newcommand{\hzeros}{H_0^1}
\newcommand{\honed}{\Hd^1(\Omega)}
\newcommand{\honeds}{\Hd^1}
\newcommand{\hzerod}{\Hd_0^1(\Omega)}
\newcommand{\hzerods}{\Hd_0^1}
\newcommand{\htwod}{\mathbf{H}^2(\Omega)}
\newcommand{\hcurl}{\Hd({\text{curl}})}
\newcommand{\hdiv}{\mathbf{H}(\text{div})}
\newcommand{\hcurlh}{\Hd({\text{curl}_h})}
\newcommand{\hdivh}{\mathbf{H}(\text{div}_h)}
\newcommand{\ltwo}{L^{2}(\Omega)}
\newcommand{\ltwos}{L^{2}}
\newcommand{\ltwod}{\Ld^2(\Omega)}
\newcommand{\ltwods}{\Ld^2}
\newcommand{\linfs}{L^{^\infty}}
\newcommand{\linfds}{\Ld^{^\infty}}
\newcommand{\lzerotwo}{L_0^2(\Omega)}
\newcommand{\bv}[1]{\mathbf{#1}}
\newcommand{\gradv}[1]{\nabla\bv{#1}}
\newcommand{\diver}[1]{\textrm{div} \, \bv{#1}}
\newcommand{\curl}[1]{\textrm{curl} \, \bv{#1}}
\newcommand{\lp}{\bigl(}
\newcommand{\rp}{\bigr)}
\newcommand{\alp}{\left(}
\newcommand{\arp}{\right)}
\newcommand{\lb}{\left\lbrace}
\newcommand{\rb}{\right\rbrace}
\newcommand{\lbb}{\left\lbrace \!\! \left\lbrace}
\newcommand{\rbb}{\right\rbrace \!\! \right\rbrace}
\newcommand{\lj}{\left\llbracket}
\newcommand{\rj}{\right\rrbracket}
\newcommand{\LN}{\left\|}
\newcommand{\RN}{\right\|}
\newcommand{\dt}{\tau}
\newcommand{\inc}{\delta}
\newcommand{\tril}{\,b}
\newcommand{\trilm}{\, b_h^{m}}
\newcommand{\itemizebullet}{$\diamond$}
\newcommand{\polydegree}{\ell}
\newcommand{\nunot}{\nu_{_{\!0}}}
\newcommand{\inertiamom}{\jmath}
\newcommand{\chartime}{\mathscr{T}}
\newcommand{\magdiff}{\sigma}
\newcommand{\permit}{\varkappa_0}
\newcommand{\heff}{\bv{h}}
\newcommand{\hd}{\bv{h}_d}
\newcommand{\ha}{\bv{h}_a}
\newcommand{\Vspace}{\boldsymbol{\mathcal{V}}}
\newcommand{\Hspace}{\boldsymbol{\mathcal{H}}}
\newcommand{\Mspace}{\boldsymbol{\mathcal{M}}}
\newcommand{\utest}{\bv{v}}
\newcommand{\Utest}{\bv{V}}
\newcommand{\FEspaceU}{\mathbb{U}}
\newcommand{\FEspaceUht}{\FEspaceU_{h\dt}}
\newcommand{\FEspaceUdivfree}{\mathbb{V}}
\newcommand{\FEspaceP}{\mathbb{P}}
\newcommand{\FEspacePht}{\mathbb{Q}_{h\dt}}
\newcommand{\Ptest}{Q}
\newcommand{\FEspaceW}{\mathbb{W}}
\newcommand{\FEspaceWht}{\mathbb{W}_{h\dt}}
\newcommand{\wtest}{\bv{x}}
\newcommand{\Wtest}{\bv{X}}
\newcommand{\FEspaceM}{\mathbb{M}}
\newcommand{\FEspaceMht}{\mathbb{M}_{h\dt}}
\newcommand{\mtest}{\bv{z}}
\newcommand{\Mtest}{\bv{Z}}
\newcommand{\phitest}{\chi}
\newcommand{\FEspacePhi}{\mathbb{X}}
\newcommand{\Phitest}{\mathrm{X}}
\newcommand{\hdpot}{\varphi}
\newcommand{\hdpoth}{\Phi}
\newcommand{\hapot}{\phi}
\newcommand{\hdpoto}{\psi}
\newcommand{\bulkint}[2]{\int_{#1}{#2} \, dx }
\newcommand{\bdryint}[2]{\int_{#1} {#2} \, dS }
\newcommand{\triangulation}{\mathcal{T}_h}
\newcommand{\element}{T}
\newcommand{\quadrilateral}{\mathcal{Q}}
\newcommand{\simplex}{\mathcal{P}}
\newcommand{\dipdir}{\bv{d}}
\newcommand{\cstab}{{C}_{\text{stab}}}
\newcommand{\tf}{t_F}
\newcommand{\bdry}{\Gamma}
\newcommand{\suscep}{\varkappa_0}
\newcommand{\cf}{cf.}
\newcommand{\normal}{\boldsymbol{\mathit{n}}}
\newcommand{\SPvel}{\Pi_{s}}
\newcommand{\SPpress}{\pi_{s}}
\begin{document}


\title[Ferrohydrodynamics]{The equations of ferrohydrodynamics: \\
modeling and numerical methods}

\author[R.H.~Nochetto]{Ricardo H.~Nochetto}
\address[R.H.~Nochetto]{Department of Mathematics and Institute for Physical Science and Technology, University of Maryland, College Park, MD 20742, USA.}
\email{rhn@math.umd.edu}

\author[A.J.~Salgado]{Abner J.~Salgado}
\address[A.J.~Salgado]{Department of Mathematics, University of Tennessee, Knoxville, TN 37996, USA.}
\email{asalgad1@utk.edu}

\author[I.~Tomas]{Ignacio Tomas}
\address[I.~Tomas]{Department of Mathematics, University of Maryland, College Park, MD 20742, USA.}
\curraddr{Department of Mathematics, Texas A\&M University, College Station, TX 77843, USA.}
\email{itomas@tamu.edu}

\thanks{The work of RHN and IT is supported by NSF grants DMS-1109325 and DMS-1411808. AJS is partially supported by NSF grant DMS-1418784.}

\keywords{Ferrofluids, Incompressible Flows, Microstructure, Micropolar Flows, Magnetic Fluid Flow, Angular Momentum, Magnetization.}

\subjclass[2000]{65N12; 
65M60;   
35Q35; 
35Q61. 
}

\date{Submitted \today}

\begin{abstract}
We discuss the equations describing the motion of ferrofluids subject to an external magnetic field. We concentrate on the model proposed by R. Rosensweig, provide an appropriate definition for the effective magnetizing field, and explain the simplifications behind this definition. We show that this system is formally energy stable, and devise a numerical scheme that mimics the same stability estimate. We prove that solutions of the numerical scheme always exist and, under further simplifying assumptions, that the discrete solutions converge. We also discuss alternative formulations proposed in pre-existing work, primarily involving a regularization of the magnetization equation and supply boundary conditions which lead to an energy stable system. We present a series of numerical experiments which illustrate the potential of the scheme in the context of real applications.

\end{abstract}
\maketitle

\section{Introduction}
\label{sec:Intro}

A ferrofluid is a liquid which becomes strongly magnetized in the presence of applied magnetic fields. It is a colloid made of nanoscale monodomain ferromagnetic particles suspended in a carrier fluid (water, oil, or other organic solvent). These particles are suspended by Brownian motion and will not precipitate nor clump under normal conditions. Ferrofluids are dielectric (non conducting) and paramagnetic (they are attracted by magnetic fields, and do not retain magnetization in the absence of an applied field); see \cite{Behrens}.

Ferrofluids can be controlled by means of external magnetic fields, which gives rise to a wealth of control-based applications. They were developed in the 1960's to pump fuel in spacecrafts without mechanical action \cite{Stephen1995}. Recent interest in ferrofluids is related to technical applications such as instrumentation, vacuum technology, lubrication, vibration damping, radar absorbing materials, and acoustics \cite{Miwa2003,Raj1995,Vinoy2011}. For instance, they are used as liquid seals for the drive shafts of hard disks, for vibration control and damping in vehicles and enhanced heat transfer of electronics. Other potential applications are in micro/nanoelectromechanical systems: magnetic manipulation of microchannel flows, particle separation, nanomotors, micro electrical generators, and nanopumps \cite{Shib2011,Hart2004,Yama2005,Zahn01,Zeng2013}. One of the most promising applications are in the field of medicine, where targeted (magnetically guided) chemotherapy and radiotherapy, hyperthermia 
treatments,
 and magnetic resonance imaging contrast enhancement are very active areas of research \cite{Rin2009,Pank03,Shap2013}. An interesting potential application of ferrofluids under current research is the construction of adaptive deformable mirrors \cite{Laird04,Laird06,Brou07}.

The applications mentioned above justify the development of tools for the simulation of ferrofluids. There are some mathematical models, based on systems of partial differential equations \cite{Ros97,Shli2002}. However, these are analytically tractable only in a very limited number of cases \cite{Rinal02,Zahn95}. In this sense, numerical analysis and scientific computation have a lot to offer to smart fluids and smart materials in general.

Mathematical modeling of ferrofluids may have some points in common with magnetohydrodynamics (MHD), micromagnetism and liquid crystals, but generally speaking it uses significantly different ideas. For instance, the equations of MHD deal with nonmagnetizable but electrically conducting fluids, which is in sharp constrast to ferrofluids. The dominant body force in MHD is the Lorentz force, whereas for ferrofluids the Kelvin force is the most important one, leading to different kinds of nonlinearities. The Landau-Lifshitz-Gilbert equations of micromagnetism use rigid director fields as they model saturated \emph{magnetically hard} materials: they have a high coercive force, are difficult to magnetize and demagnetize, but are capable of retaining a significant residual magnetization. If the magnetization $\bv{m}$ satisfies $|\bv{m}| = m_s$, with $m_s$ the saturation magnetization, we can factor out $m_s$ and include it into the constitutive parameters, thus obtaining an equation for unitary director fields. On 
the other hand, ferrofluids are \emph{magnetically soft}: they are easy to magnetize, yet they retain very little or no residual magnetization in the absence of an external magnetic field and they usually exhibit a high saturation value \cite{Behrens}. Therefore, rigid director fields are not suitable to describe the magnetization of a ferrofluid.

There are currently two generally accepted ferrofluid models which we will call by the name of their developers: the Rosensweig  \cite{Ros97} and Shliomis \cite{Shli2002} models. Rigorous work on the analysis (existence of global weak solutions and local existence of strong solutions) for these models is very recent \cite{AmiShliomis2008,Ami2009,Ami2010,Ami2008}. In this work we will concentrate on the Rosensweig model which includes all the inherent difficulties of the simpler Shliomis model. Our key interest is around boundary conditions and discretization techniques leading to energy-stable continuum and discrete systems. This task becomes particularly complicated if we want to include the effects of non-trivial applied magnetizing fields, which so far has not been properly addressed in the literature. For this purpose we will need to revisit the theory of magnetostatics and typical boundary value problems associated with it. At this stage most manipulations are formal, but they are still able to shed 
light into very important issues such as space discretization techniques. Finally, we feel compelled to remark that, to the best of our knowledge, this is the first work presenting a stable numerical scheme for the Rosensweig model fully coupled with the magnetostatics equations accounting for the effect of the demagnetizing field.

Our presentation is organized as follows. In Section \ref{sec:Rosensweig} we present the Rosensweig model: after introducing notation in \S\ref{notation}, we elaborate on several issues related to boundary conditions in \S\ref{hdbcs}--\S\ref{probstat}. In \S\ref{continuumenergy} we derive a formal energy estimate which constitutes the main guideline to devise a numerical scheme. We introduce the numerical method in Section~\ref{sec:Discretization}, show that this scheme is energy stable, and that solutions always exist. In Section~\ref{simpferro1} we consider a simplified model, and devise a scheme for this model for which we show convergence besides stability. In Section~\ref{validation} we validate the scheme using prefabricated solutions, and finally in Section~\ref{sec:numerics} we show a series of numerical examples which illustrate the potential of the scheme in the context of real applications.


\section{The Rosensweig model of ferrohydrodynamics}
\label{sec:Rosensweig}

Consider a mass of homogeneous, incompressible micropolar ferrofluid with linear velocity $\bv{u}$ and angular velocity $\bv{w}$ contained in a bounded simply connected domain $\Omega\subset \mathbb{R}^3$, subject to a smooth harmonic (curl-free and div-free) applied magnetizing field $\ha$ inducing a magnetization field $\bv{m}$ and a demagnetizing (stray) field $\hd$. The evolution of such fluid is described by the following system of partial differential equations (PDEs); see for instance \cite{Ros97,Ros2002}, and Remark \ref{assderros} regarding the intrinsic limitations of this model:
\begin{subequations} 
\label{eq:ferroeq}
\begin{align}
\label{eq:ferroeq:lm}
\bv{u}_t + (\bv{u}\cdot\nabla)\bv{u} - (\nu + \nu_r) \Delta \bv{u} +\nabla p &= 2 \nu_r \curl{w} + \mu_0 (\bv{m}\cdot\nabla)\heff  \, , \\
\label{eq:ferroeq:cm}
\diver{u} &= 0 \, , \\
\label{eq:ferroeq:cam}
\inertiamom \bv{w}_t + \inertiamom (\bv{u}\cdot\nabla)\bv{w} - c_1 \Delta \bv{w} - c_2 \nabla \diver{w} + 4 {\nu}_r \bv{w} &= 2 \nu_r \curl{u} + \mu_0 \bv{m} \times \heff \, ,   \\
\label{eq:ferroeq:cmag}
\bv{m}_t + (\bv{u}\cdot\nabla)\bv{m} - \magdiff \Delta \bv{m} &= \bv{w} \times \bv{m} -
    \tfrac{1}{\chartime} (\bv{m} - \permit \heff) \, ,
\end{align}
\end{subequations}
in $\Omega$ for every $t \in (0,\tf]$, where 
\begin{align}
\label{eq:totalh}
\heff = \ha + \hd \, , 
\end{align}
is the effective magnetizing field; see Remark~\ref{remm1} below. The material constants $\nu$, $\nu_r$, $\mu_0$, $\inertiamom$, $c_a$, $c_d$, $c_0$, $\magdiff$, $\chartime$ and $\permit$ are assumed nonnegative and, moreover, we will assume that their values are such that the model satisfies the Clausius-Duhem inequality (\cf~\cite{Ros97,Ros2002,Luka,Tom13}). Expression \eqref{eq:ferroeq:lm} represents the conservation of linear momentum, \eqref{eq:ferroeq:cm} gives the conservation of mass, \eqref{eq:ferroeq:cam} corresponds to the conservation of angular momentum, and \eqref{eq:ferroeq:cmag} describes the evolution of the magnetization. The forcing term $\mu_0 (\bv{m}\cdot\nabla)\heff$ in the linear momentum equation is the so-called Kelvin force. 

The magnetic diffusion $\magdiff$ was introduced in \cite{Ami2008} as a regularization mechanism in order to prove global existence of weak solutions; but its physical grounds have been called into question \cite{Rin2013conv}, $\magdiff$ being negligibly small or zero. Therefore, we will primarily focus on the case $\magdiff = 0$. If $\magdiff >0$ the boundary conditions associated with the vector Laplacian $\Delta\bv{m}$ allow us to introduce additional modeling features. Thus, in this case, we will propose energy-stable boundary conditions.

The constant $\permit$ is dimensionless; it is the magnetic susceptibility, the product $\mu_0 (1+\suscep)$ is what is usually called the magnetic permeability of the material (\cf~\cite{Jack1998}). For oil-based ferrofluids \cite{Rinal02} we have $\permit \in [0.3,4.3]$, and for water-based ferrofluid $\permit$ is generally smaller than unity. If $\permit = 0$ the medium is not magnetizable, so there is no ferrohydrodynamic phenomena: magnetic fields cannot exert any force or torque on the fluid. The quantity $\permit \heff$ is usually called the \emph{equilibrium magnetization}: if $\magdiff \equiv 0$, $\bv{u} \equiv 0$ and $\bv{w} \equiv 0$ in the magnetization equation \eqref{eq:ferroeq:cmag}, and $\heff$ is given, then we get
\begin{align}\label{boundaryODE}
\bv{m}_t + \tfrac{1}{\chartime} \bv{m} = \tfrac{\permit}{\chartime} \heff \, ,
\end{align}
so that $\bv{m} \approx \permit \heff$ when close to equilibrium. The core dynamics of the magnetization equation in \eqref{eq:ferroeq:cmag} is dominated by the reaction terms for most flows of interest (see for instance \cite{rinaldi2002,Rinal02} for the dimensional analysis of the Rosensweig model). Essentially, this is the case because the relaxation time $\chartime$ of commercial grade ferrofluids is in the range of $10^{-5}$ to $10^{-9}$ seconds (see for instance \cite{rinaldi2002,Shli2002}), which makes $\tfrac{1}{\chartime}$ a very large constant.

System \eqref{eq:ferroeq} is supplemented with initial conditions for the linear velocity, the angular velocity, and the magnetization
\begin{align}
\label{eq:icFERRO}
\bv{u}\rvert_{t=0} = \bv{u}_0 \, , \ \
\bv{w}\rvert_{t=0} = \bv{w}_0 \, , \ \
\bv{m}\rvert_{t=0} = \bv{m}_0 \, , \ \
\end{align}
as well as (no-slip and no-spin) boundary conditions for the linear and angular velocities
\begin{align}
\label{eq:bcMNSE}
\begin{aligned}
  &\bv{u}\rvert_{\bdry \times (0,\tf)} = 0 \, , 
      &&\bv{w}\rvert_{\bdry \times (0,\tf)} = 0 \, .
\end{aligned}
\end{align}
The quantities $\heff$ and $\bv{m}$ are subordinate to Maxwell's equations, which hold in the whole space $\mathbb{R}^3$. Truncating $\heff$ to $\Omega$ and choosing suitable boundary conditions necessarily compromises the nature of the original magnetostatic problem, yet it can provide a reasonable starting point to develop and understand an energy-stable Partial Differential Equations (PDEs) system. We will reduce the magnetostatic problem to a single scalar potential in \S\ref{hdbcs}, and in \S\ref{mbcs} we discuss the information that is lost in this process, derive the (approximate) boundary value problem (BVP) that $\hd$ satisfies, the boundary conditions that can be applied to $\hd$ and $\bv{m}$, and discuss how physically realistic they are. We will also comment on the requirements for $\ha$ to be a physically reasonable magnetizing field.

\begin{remark}[assumptions underlying the derivation of the Rosensweig model] \label{assderros} A ferrofluid is a colloidal suspension of ferromagnetic particles in a carrier fluid. The limitations of model \eqref{eq:ferroeq} can be traced back to the following (quite restrictive) assumptions made on these particles at the time of its derivation (see for instance \cite{Ros2002}):
\begin{itemize}
\item[\itemizebullet] The ferromagnetic particles are spherical.
\item[\itemizebullet] The ferrofluid is a monodisperse mixture, meaning that the ferromagnetic particles are of the same mass/size.
\item[\itemizebullet] The density of ferromagnetic particles (number of particles per unit volume) in the carrier liquid is considered to be homogeneous.
\item[\itemizebullet] No agglomeration, clumping, anisotropic behavior (e.g. formation of chains), nor particle-to-particle interactions are considered.
\item[\itemizebullet] The induced fields ($\bv{m}$ and $\hd$) are unable to perturb the applied magnetic field $\ha$.
\end{itemize}
These assumptions might restrict the applicability of the Rosensweig model for some physical situations; see \cite{Ros97,Oden2009} for more details.
\end{remark}

\begin{remark}[effective magnetizing field] \label{remm1}
The effective magnetizing field is defined by \eqref{eq:totalh}. In other models the effective magnetizing field can be more complicated and include, as in micromagnetics, terms due to the exchange of energy and anisotropy \cite{Mayer2009}. Most analytic computations for the Rosensweig model are usually derived assuming that $\permit << 1$ so that $\hd$ and the equations of magnetostatics can be disregarded, thus setting $\heff := \ha$. In \S\ref{simpferro1} we will consider this scenario and provide some arguments to justify this simplification. Finally, the only available existence results for the Rosensweig model \cite{Ami2010,Ami2008} define the effective magnetizing field in a way that leads to $\heff = \hd$, which is equivalent to considering the unforced case (relaxation to equilibrium).
\end{remark}

\subsection{Notation and assumptions}
\label{notation}
We will assume that the domain $\Omega$ is a convex polyhedron with boundary $\bdry$. We define the trilinear form $\tril(\cdot,\cdot,\cdot)$
\begin{align}
\label{eq:defoftrilb}
  \tril(\bv{m},\heff,\bv{u}) = \sum_{i,j = 1}^d \int_{\Omega} \bv{m}^i \heff_{x_i}^j  \bv{u}^j \, dx \, .
\end{align}
Let $\Hspace$ and $\Vspace$ denote the classical spaces of divergence-free functions \cite{Temam}
\[
  \Hspace = \lb \utest \in \ltwod \, | \, \diver{}\utest = 0  \text{ in } \Omega \text{ and } \utest \cdot \normal = 0 \text{ on } \bdry \rb, \quad
  \Vspace = \lb \utest \in \hzerod \, | \, \diver{}\utest = 0 \, \text{in } \Omega \rb = \hzerod \cap \Hspace \, ,
\]
and let $\Mspace$ be the space
\begin{align*}
\Mspace = \lb \mtest \in \ltwod \ | \ \diver{}\mtest \in \ltwo , \
\curl{}\mtest \in \ltwod \rb = \hcurl \cap \hdiv \, .
\end{align*}
For more details about the spaces $\hcurl$ and $\hdiv$, and the characterization of their traces see, for instance, \cite{Boff2013,Girault}.

We recall the following identity for vector-valued functions in the space $\hzerod$
\begin{align}
\label{eq:Honezdcurldiv}
  |\bv{u}|_{\honeds}^2 = \|\curl{u}\|_{\ltwods}^2 + \|\diver{u}\|_{L^2}^2 \, .
\end{align}
Since the underlying function spaces will always be defined over $\Omega$, from now on we use the abbreviations $\ltwos = \ltwo$, $\hones = \hone$, $\hzeros = \hzero$ for scalar-valued functions, and $\ltwods = \ltwod$, $ \honeds = \honed$, $\hzerods = \hzerod$ for vector-valued valued functions.


\subsection{Modeling the magnetic field: The scalar potential approach}
\label{hdbcs}
One of the main difficulties in the analysis and approximation of \eqref{eq:ferroeq} lies in the fact that the magnetic field $\heff$ is governed by Maxwell's equations which are naturally defined over $\mathbb{R}^3$. Under reasonable assumptions these can be further  simplified to the equations of magnetostatics
\begin{align}
\label{eq:magnetostatics}
\curl{h} = 0 \, , \ \diver{b} = 0 \, \ \text{in }\mathbb{R}^3,
\end{align}
which imply the transmission conditions
\begin{align}
\label{eq:jumpconditions}
\begin{aligned}
\lj \heff \rj  \times \normal = 0  \,  , \ 
\lj \bv{b} \rj  \cdot \normal = 0  \ \text{on } \bdry \, , 
\end{aligned}
\end{align}
over any surface $\bdry \subset \mathbb{R}^3$; hereafter $\normal$ denotes the outward unit normal to $\bdry$ (the boundary of $\Omega$) and $\lj q \rj$ the jump of the quantity $q$ across $\bdry$. The magnetic induction $\bv{b}$ is defined as
\begin{align}
\label{eq:bfielddef}
  \bv{b} = \mu_0 \left( \heff + \boldsymbol{1}_\Omega \bv{m} \right),
\end{align}
where $\boldsymbol{1}_\Omega$ is the characteristic function of $\Omega$. Definitions \eqref{eq:totalh} and \eqref{eq:bfielddef}, together with \eqref{eq:magnetostatics} and \eqref{eq:jumpconditions}, and assuming that $\ha$ is a smooth harmonic (see Remark \ref{remHarmonic}) vector field in $\mathbb{R}^3$, yield the following constraints for $\hd$:
\begin{align}
\label{eq:h_dpde00}
  \curl{h}_d = 0  \text{ in } \mathbb{R}^3 \, , \
  \diver{h}_d = - \diver{}\bv{m} \text{ in } \Omega \, , \
  \diver{h}_d = 0 \text{ in } \mathbb{R}^3\setminus\Omega \, ,
\end{align}
supplemented with (assuming that $\lj\ha\rj = 0$ on $\bdry$):
\begin{align}
\label{eq:transscond}
\begin{aligned}
\normal \times \hd = \normal \times \hd^{\text{out}},  \ \ \ 
\hd\cdot \normal = (\hd^{\text{out}} - \bv{m})\cdot \normal \ \ \ \text{on } \bdry,
\end{aligned}
\end{align}
where we use the superscript ``out'' to denote values in $\mathbb{R}^3\setminus\Omega$. The problem defined by \eqref{eq:h_dpde00} and \eqref{eq:transscond} is the most physical approach to compute $\hd$; see for instance \cite[Chapter 3]{Mayer2009} in the context of micromagnetism. This approach, however, entails a major difficulty: we have to deal with an exterior problem. This may not be an issue from the point of view of analysis but, from the numerical point of view this would require highly specialized techniques for our (already) quite complex ferrohydrodynamics problem. There are just a few references actually solving problem \eqref{eq:h_dpde00}-\eqref{eq:transscond} (see for instance \cite{Koehler1997,Beben2014}), but most generally (see for instance \cite{ProhlBartels2008}) some form of truncation is favored. Following \cite{Ami2009,Ami2010} we will truncate the support of $\hd$, i.e.~we will assume that $\hd^{\text{out}} = 0$, which yields the BVP:
\begin{align}
\label{eq:h_dpde01}
\begin{aligned}
\curl{h}_d &= 0  \text{ in } \Omega \, , \ \diver{h}_d = - \diver{}\bv{m} \text{ in } \Omega \, , \\
\hd\cdot \normal &=  - \bv{m}\cdot \normal \, , \  \normal \times \hd = 0 \text{ on } \bdry \, .
\end{aligned}
\end{align}
The simplification that leads to \eqref{eq:h_dpde01} is not necessarily physically faithful (unless $\hd^{\text{out}} \approx 0$), yet it is widely used in practice \cite{Bossa98,Rapp2013,SolFerr2011,Neittan1996}. We can approximate problem \eqref{eq:h_dpde01} by using a scalar potential approach \cite{Bossa98}, that is we set $\hd = \nabla \hdpoto$ where $\hdpoto$ solves either
\begin{align}
\label{eq:phiNeu}
  -\Delta \hdpoto = \diver{}\bv{m} \ \ \text{in } \Omega \, , \ 
  \frac{\partial \hdpoto}{\partial \normal} =  - \bv{m} \cdot \normal \ \  \text{on } \bdry
\end{align}
or
\begin{align}
\label{eq:phiDir}
-\Delta \hdpoto = \diver{}\bv{m} \ \ \text{in } \Omega \, , \ 
\hdpoto = 0 \ \  \text{on } \bdry \, .
\end{align}
This approach, however, does not retain all the boundary conditions of \eqref{eq:h_dpde01}. The Neumann BVP \eqref{eq:phiNeu} retains $\hd\cdot \normal =  - \bv{m}\cdot \normal$, while the tangential condition $\normal \times \hd = 0$ results from $\hdpoto = 0$ of the Dirichlet BVP \eqref{eq:phiDir}.

Further simplified problems are used in practice for $\hd$. The homogeneous Neumann problem
\begin{align}
\label{eq:phiNeuII}
-\Delta \hdpoto = \diver{}\bv{m} \quad \text{in } \Omega \, , \
\frac{\partial\hdpoto}{\partial \normal} =  0 \quad \text{on } \bdry \, ,
\end{align}
is also used \cite{Ami2008,SolFerr2011,Bossa98,Rapp2013} to approximate the demagnetizing field. Problem \eqref{eq:phiNeuII}, however, can only be justified if $\bv{m}\cdot \normal$ is very small.

Starting from the Maxwell's equations \eqref{eq:magnetostatics} we have derived the Neumann problem \eqref{eq:phiNeuII} for the scalar potential. This encompasses a series of simplifying assumptions, rarely explained in the scientific literature, here made explicit. 
These simplifications compromise the nature of the original magnetostatic problem, but for the time being, the simplified problem \eqref{eq:phiNeuII} will keep the spirit of the demagnetizing field alive. Finally, it is worth mentioning that so far only \eqref{eq:phiNeu} and \eqref{eq:phiNeuII} have been used for the construction of an energy-stable system and the analysis of the Rosensweig model \cite{Ami2008, Ami2009,Ami2010}. In this work, we will use \eqref{eq:phiNeu}, giving us control of the normal condition on $\hd$ but not on the tangential component.
\begin{remark}[physical requirements on $\ha$]\label{remHarmonic}
It is not difficult to see that $\ha$ must be harmonic. If $\omega$ is a control volume and there is no magnetizable media inside $\omega$ we have that $\bv{m} \equiv 0$ and $\hd \equiv 0$ in $\omega$. By Maxwell's equations then $\curl \heff = \curl{}\ha = 0$ and $\diver{}\bv{b} = \mu_0 \diver{}\ha = 0$ in $\omega$. Since $\omega$ is arbitrary $\ha$ is harmonic (curl-free and div-free).
\end{remark}
\begin{remark}[variational problems for the magnetizing fields]\label{rem00}
Multiply \eqref{eq:phiNeu} by a sufficiently smooth test function $\phitest$. Integrating by parts, and using $\tfrac{\partial\hdpoto}{\partial \normal} = - \bv{m}\cdot \normal$ yields
\begin{align}\label{varforphi01}
\int_{\Omega} \nabla\hdpoto \cdot \nabla \phitest \, dx
= -  \int_{\Omega} \bv{m} \cdot \nabla \phitest \, dx
\ \ \ \forall \, \phitest \in \hone \, .
\end{align}
Since $\curl{h}_{a}=0$, there exists a scalar potential $\hapot$ such that $\ha = \nabla \hapot$, then
\begin{align}
\label{varforphi03}
\int_{\Omega} \nabla\hapot \cdot \nabla \phitest \, dx
=  \int_{\Omega} \ha \cdot \nabla \phitest \, dx \ \ \ \forall \, \phitest \in \hone.
\end{align}
It will be useful, primarily to simplify the presentation, to set $\heff = \nabla\hdpot$ with 
$\hdpot = \hdpoto + \hapot$, so that $\hdpot$ satisfies
\begin{align}
\label{varforphi05}
\int_{\Omega} \nabla\hdpot \cdot \nabla \phitest \, dx 
=  \int_{\Omega} ( \ha - \bv{m} )\cdot \nabla \phitest \, dx \ \ \ \forall \, \phitest \in \hone \, , 
\end{align}
which is the variational form of the BVP
\begin{align}
\label{eq:phiNeuIII}
-\Delta \hdpot = \diver{}\bv{m} \ \text{in } \Omega \, , \
\frac{\partial\hdpot}{\partial \normal} = (\ha - \bv{m})\cdot \normal \ \text{on } \bdry \, , \
\end{align}
where the term $- \diver{}\ha$ on the right-hand side of \eqref{eq:phiNeuIII} has been omitted since $\ha$ is solenoidal.
\end{remark}
%
%
\subsection{Boundary conditions for $\bv{m}$ and their coupling with $\hd$}
\label{mbcs}
For the magnetization $\bv{m}$ we consider both $\magdiff = 0$ and $\magdiff > 0$ in \eqref{eq:ferroeq}.
Since $\bv{u}\cdot \normal = 0$ on $\Gamma$, no boundary conditions for $\bv{m}$ are needed when $\sigma=0$, because the PDE for $\bv{m}$ is a transport equation. If $\sigma>0$, on the contrary, we must impose boundary conditions that are compatible with the convection-diffusion equation for $\bv{m}$. For the magnetizing field $\heff$, Maxwell's equations dictated our choice. For $\bv{m}$, however, suitable boundary conditions are rarely discussed in the literature. For this reason, our selection criterion for boundary conditions is whether they lead to an energy law.

The boundary conditions that can be applied to the magnetization $\bv{m}$ are those of the vector Laplacian. Since $- \Delta \bv{m} = \curl{}^2\bv{m} - \nabla\diver{m}$, integration by parts yields
\begin{align}\label{vectlapintparts}
\begin{split}
\bulkint{\Omega}{- \Delta \bv{m} \cdot \mtest} &=
\bulkint{\Omega}{\curl{m}\cdot \curl{}\mtest + \diver{m} \, \diver{}\mtest} \\
&- \bdryint{\bdry}{(\curl{m}\times\normal)\cdot \mtest } \
- \bdryint{\bdry}{\diver{m} (\mtest\cdot\normal)  } \, ,
\end{split}
\end{align}
so that we can consider:
\begin{itemize}
  \item[\itemizebullet] Magnetic boundary conditions (\cf~\cite{Ami2008})
  \begin{align}
  \label{eq:mmagbcs}
    \bv{m}\cdot\normal = g \, , \
    \curl{m} \times\normal = \bv{r} \
    \text{on } \bdry \, ,
  \end{align}
where $g$ and $\bv{r}$ are the boundary data. The condition $\curl{m} \times \normal = \bv{r}$ is natural, while $\bv{m}\cdot \normal = g$ is essential.
  \item[\itemizebullet] Electric boundary conditions (\cf~\cite{Arn2010})
  \begin{align}
  \label{eq:melecbcs}
    \diver{m} = q \, , \ 
    \bv{m} \times \normal = \bv{y} \
    \text{on } \bdry \, ,
  \end{align}
  where the first condition is natural, the second is essential, and the data $\bv{y}$ only has tangential component.
  \item[\itemizebullet] Robin-like boundary conditions
\begin{align}\label{eq:mrobinbcs}
\begin{split}
\curl{m} \times \normal + \gamma_1 (\bv{m} - (\bv{m}\cdot \normal) \normal - \bv{y} ) &= \bv{r} 
\ \ \text{on }\bdry \, , \\
\diver{m} + \gamma_2 (\bv{m}\cdot \normal - g) &= q  \ \ \text{on }\bdry \, .
\end{split}
\end{align}
Since $|\normal|=1$ we have that $(\bv{u} - (\bv{u}\cdot \normal) \normal )\cdot \bv{w} = (\bv{u} \times \normal)\cdot (\bv{w} \times \normal)$, then we get from \eqref{vectlapintparts} the following variational formulation of $- \Delta\bv{m} = \bv{f}$: find $\bv{m} \in \Mspace $ such that
\begin{align*}
\begin{aligned}
&\bulkint{\Omega}{\curl{m}\cdot \curl{v} + \diver{m} \, \diver{v}}
+ \gamma_1 \bdryint{\bdry}{(\bv{m}\times \normal)\cdot(\bv{v}\times \normal)} \\
&+ \gamma_2 \bdryint{\bdry}{(\bv{m}\cdot \normal)(\bv{v}\cdot \normal)}
= \bulkint{\Omega}{\bv{f} \cdot \bv{v}}
+ \bdryint{\bdry}{\bv{r} \cdot \bv{v} } \\
&+ \bdryint{\bdry}{q  (\bv{v}\cdot \normal) }
+ \gamma_1 \bdryint{\bdry}{(\bv{y}\times \normal) \cdot(\bv{v}\times \normal) }
+ \gamma_2 \bdryint{\bdry}{g  (\bv{v}\cdot \normal) }
\ \ \ \ \forall \, \bv{v} \in \Mspace.
\end{aligned}
\end{align*}
The following asymptotic cases are of interest: for $\gamma_1 = 0$, $\gamma_2 \rightarrow \infty$, \eqref{eq:mrobinbcs} tends to the magnetic boundary conditions \eqref{eq:mmagbcs}, while if $\gamma_2 = 0$, $\gamma_1 \rightarrow \infty$, \eqref{eq:mrobinbcs} tends to the electric boundary conditions \eqref{eq:melecbcs}.
\item[\itemizebullet] Natural boundary conditions
\begin{align}
\label{eq:mstable2}
\curl{m} \times \normal = 0, \quad \diver{m}= 0 \qquad \text{on }\bdry \, , 
\end{align}
which lead to an energy-stable system. We will mainly use these conditions to explain the main ideas behind the development of an energy estimate for \eqref{eq:ferroeq}.
\end{itemize}
%
%
\subsection{Simplified initial boundary value problems}
\label{probstat}
In this paper we will consider the simplified Initial Boundary Value Problem (IVBP) of ferrohydrodynamics: given a smooth harmonic vector field $\ha = \ha(x,t)$, we seek $\lb\bv{u},p,\bv{w},\bv{m}, \heff\rb$ satisfying the equations \eqref{eq:ferroeq} in $\Omega \times [0,\tf]$, where $\bv{u}$ and $\bv{w}$ satisfy the boundary conditions \eqref{eq:bcMNSE}, $\heff = \nabla\hdpot$ where $\hdpot$ solves \eqref{eq:phiNeuIII}, and the equation for $\bv{m}$ \eqref{eq:ferroeq:cmag} is supplemented with one of the following boundary conditions:
\begin{itemize}
\item[1.] $\magdiff = 0$ with no boundary conditions for $\bv{m}$.
\item[2.] $\magdiff > 0$ with the natural boundary conditions \eqref{eq:mstable2}.
\item[3.] $\magdiff > 0$ with the following variant of \eqref{eq:mrobinbcs}:
\begin{align}
\label{eq:mstable}
\begin{split}
\curl{m} \times \normal 
+ \gamma \big(\bv{m} - (\bv{m}\cdot \normal) \normal - \permit (\bv{h} - (\bv{h}\cdot \normal) \normal) \big) &= 0 \ \ \text{on }\bdry \, , \\
\diver{m} + \gamma \big(\bv{m}\cdot \normal - \permit \heff \cdot \normal \big) &= 0 \ \ \text{on }\bdry \, , 
\end{split}
\end{align}
which is obtained by setting $\gamma_1 = \gamma$, $\bv{y} = \permit (\bv{h} - (\bv{h}\cdot \normal) \normal)$,  $\bv{r} = 0$, $\gamma_2 = \gamma$, $g = \permit \heff \cdot \normal $, and $q = 0$ in \eqref{eq:mrobinbcs}, with $\gamma$ being a material constant that characterizes the magnetization dynamics on the surface of the ferrofluid.
\end{itemize}
\begin{remark}[boundary dynamics] A possible physical explanation for \eqref{eq:mstable} is that, on the boundary, $\bv{m}$ will attempt to reach equilibrium $\bv{m} = \permit \heff$ according to \eqref{boundaryODE}, but it will lag behind since $\bv{m}$ can only change at a finite rate limited by the characteristic dynamics of the magnetization. This is consistent with the no-slip and no-spin boundary conditions \eqref{eq:bcMNSE}, so the behavior of $\bv{m}$ on the boundary is solely controlled by a magnetic relaxation time as in \eqref{boundaryODE}.
\end{remark}
%
%
\section{A priori estimates and existence}
\label{existence}
Let us review the available existence results for the problems under consideration. We shall first provide some formal a priori energy estimates, which serve as basis for the existence results that will be discussed later and, more importantly, will guide us in the development of stable numerical schemes; see \S\ref{sec:Discretization}.
\subsection{A priori energy estimates}
\label{energyest}
Let us obtain an energy estimate for Case 2 of \S\ref{probstat}. Setting $\magdiff = 0$ in the final estimate we get the corresponding estimate for Case 1. The energy estimate for Case 3, is outlined in Remark~\ref{stabrobin}. We begin by showing two crucial identities that make possible the energy estimate.
\begin{lemma}[identities for the magnetization and magnetic field]
\label{lem:ids}
Let $\bv{m}$ and $\bv{h}$ denote the magnetization and effective magnetizing field, respectively. If they are sufficiently smooth we have
\begin{align}
\label{eq:vectlapint}
- (\Delta\bv{m},\heff) = - \|\diver{}\heff\|_{\ltwods}^2
- \bdryint{\bdry}{(\curl{m} \times \normal) \cdot \heff}
- \bdryint{\bdry}{\diver{m} \, (\heff\cdot \normal)} \, ,
\end{align}
and, for every smooth vector field $\utest$, such that $\diver{}\utest = 0$ in $\Omega$ and $\utest\cdot \normal = 0$ on $\bdry$
\begin{align}
\label{eq:centralident}
\tril(\utest,\bv{m},\heff) = - \tril(\bv{m},\heff,\utest) \, ,
\end{align}
where the trilinear form $\tril(\cdot,\cdot,\cdot)$ was defined in \eqref{eq:defoftrilb}. 
\end{lemma}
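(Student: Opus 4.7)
The plan is to derive both identities by integration by parts, making essential use of two structural properties of $\heff$ that follow from the magnetostatic model of \S\ref{hdbcs}: $\heff$ is curl-free, since $\heff = \nabla\hdpot$ as in Remark~\ref{rem00}, and its bulk divergence satisfies $\diver{}\heff = -\diver{m}$ in $\Omega$. The latter is a consequence of $-\Delta\hdpot = \diver{m}$ in \eqref{eq:phiNeuIII} together with $\diver{}\ha = 0$, which holds because $\ha$ is harmonic (Remark~\ref{remHarmonic}).

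For identity \eqref{eq:vectlapint}, I first decompose $-\Delta\bv{m} = \curl{}(\curl{m}) - \nabla(\diver{m})$ and test against $\heff$. The standard Green's formulas for the curl and divergence operators yield
\begin{align*}
-(\Delta\bv{m},\heff)
&= (\curl{m},\curl{h}) - \int_{\bdry}(\curl{m}\times\normal)\cdot\heff\, dS \\
&\quad + (\diver{m},\diver{}\heff) - \int_{\bdry}\diver{m}\,(\heff\cdot\normal)\, dS.
\end{align*}
The first bulk term drops because $\curl{h} = 0$, and substituting $\diver{}\heff = -\diver{m}$ in the second yields $(\diver{m},\diver{}\heff) = -\|\diver{}\heff\|_{\ltwods}^2$. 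Collecting the remaining terms produces \eqref{eq:vectlapint}.

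For identity \eqref{eq:centralident}, using $\heff^j = \hdpot_{x_j}$ I rewrite
\[
\tril(\bv{m},\heff,\utest) = \sum_{i,j=1}^d \int_\Omega \bv{m}^i\, \hdpot_{x_i x_j}\, \utest^j\, dx
\]
and integrate by parts in the $x_j$ variable. The boundary contribution is proportional to $\utest\cdot\normal$ on $\bdry$ and therefore vanishes. The two interior pieces that emerge, after a relabeling of dummy indices, are exactly $-\tril(\utest,\bv{m},\heff)$ and $-\int_\Omega (\diver{}\utest)(\bv{m}\cdot\heff)\, dx$; the latter vanishes by $\diver{}\utest = 0$.

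The main obstacle is purely bookkeeping: matching signs from the two Green's formulas (in particular the curl-of-curl identity contributes $-\int_{\bdry}(\curl{m}\times\normal)\cdot\heff$ with a minus sign, via $(\normal\times\curl{m})\cdot\heff = -(\curl{m}\times\normal)\cdot\heff$), and recognizing that the post-IBP bulk term in the second identity recovers $\tril(\utest,\bv{m},\heff)$ after swapping the dummy indices $i\leftrightarrow j$. Once the curl-free and compatible-divergence structure of $\heff$ is exploited, no further analytical work is needed beyond the smoothness assumed in the statement.
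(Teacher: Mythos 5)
Your proposal is correct and follows essentially the same route as the paper: identity \eqref{eq:vectlapint} is obtained by testing the decomposition $-\Delta\bv{m}=\curl{}(\curl{m})-\nabla(\diver{m})$ against $\heff$ (the paper's \eqref{vectlapintparts} with $\mtest=\heff$), dropping the curl term and using the bulk relation $\diver{}\heff=-\diver{m}$ from \eqref{eq:phiNeuIII} to get $(\diver{m},\diver{}\heff)=-\|\diver{}\heff\|_{\ltwods}^2$. For \eqref{eq:centralident} your use of the Hessian symmetry $\hdpot_{x_ix_j}=\hdpot_{x_jx_i}$ is exactly the paper's curl-free swap $\heff^j_{x_i}=\heff^i_{x_j}$ followed by the same integration by parts, with the same two vanishing terms.
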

\begin{proof}
To obtain \eqref{eq:vectlapint}, we first take $\mtest := \heff$ in \eqref{vectlapintparts} and recall that $\curl{}\heff = 0$. Upon multiplying \eqref{eq:phiNeuIII} by $\diver{}\heff = \Delta\hdpot$ and integrating, we deduce $(\diver{m},\diver{}\heff ) = - \|\diver{}\heff\|_{\ltwods}^2$, which substituted in \eqref{vectlapintparts} yields \eqref{eq:vectlapint}.

Since $\heff$ is curl-free we have that $\heff_{x_i}^j = \heff_{x_j}^i$. Integration by parts then yields
{\allowdisplaybreaks
\begin{align}
\label{skewproof}
\begin{aligned}
\tril(\bv{m},\heff,\bv{v}) &= \sum_{i,j = 1}^d \int_{\Omega} \bv{m}^i \heff_{x_i}^j  \bv{v}^j \, dx =
\sum_{i,j = 1}^d \int_{\Omega} \bv{m}^i \heff_{x_j}^i  \bv{v}^j \, dx
= \sum_{i,j = 1}^d \int_{\Omega} \lp ( \bv{m}^i \heff^i  )_{x_j} - \bv{m}_{x_j}^i \heff^i \rp \bv{v}^j \, dx  \\
&= \sum_{i,j = 1}^d \int_{\Omega}  - ( \bv{m}^i \heff^i  ) \bv{v}_{x_j}^j
- \bv{m}_{x_j}^i \heff^i \bv{v}^j \, dx
+ \bdryint{\bdry}{( \bv{m}^i \heff^i  ) \bv{v}^j \normal^j}   \\
&= - \bulkint{\Omega}{(\bv{m}\cdot\heff) \, \diver{v} } - \tril(\bv{v},\bv{m},\heff)
+ \bdryint{\bdry}{( \bv{m} \cdot \heff ) \bv{v}\cdot \normal} \, . 
\end{aligned}
\end{align}}
The fact that $\utest$ is solenoidal and has zero normal trace on the boundary yields \eqref{eq:centralident}. \end{proof}
With these identities at hand we obtain a formal energy estimate. To shorten the exposition we denote
{\allowdisplaybreaks
\begin{align*}
\mathscr{E} &= \mathscr{E}(\bv{u},\bv{w},\bv{m},\heff;s) = \tfrac{1}{2} \alp \|\bv{u}(s)\|_{\ltwods}^2
+ \inertiamom \|\bv{w}(s)\|_{\ltwods}^2
+ \mu_0 \|\bv{m}(s)\|_{\ltwods}^2
+ \mu_0 \|\heff(s)\|_{\ltwods}^2 \arp, \\
\mathscr{D} &= \mathscr{D}(\bv{u},\bv{w},\bv{m},\heff;s) = \nu \|\gradv{u}(s)\|_{\ltwods}^2
+ c_1 \|\gradv{w}(s)\|_{\ltwods}^2
+ \magdiff \mu_0 \|\curl{m}(s)\|_{\ltwods}^2 \\
&\ \ \ \ \ \ \ \ \ \ \ \ \ \ \ \ \ \ \ \ \ \ \ \ \
+ \magdiff \mu_0 \|\diver{m}(s)\|_{\ltwods}^2
+ \magdiff \mu_0 \|\diver{}\heff(s)\|_{\ltwods}^2
+ c_2 \|\diver{w}(s)\|_{\ltwods}^2 \\
&\ \ \ \ \ \ \ \ \ \ \ \ \ \ \ \ \ \ \ \ \ \ \ \ \
+ \nu_r \|(\curl{u} - 2 \bv{w})(s)\|_{\ltwods}^2
+ \tfrac{\mu_0}{\chartime} \|\bv{m}(s)\|_{\ltwods}^2
+ \tfrac{\mu_0}{2\chartime} \big(\tfrac{1}{2}
+ 3 \permit\big) \|\heff(s)\|_{\ltwods}^2 \, , \\
\mathscr{F} &= \mathscr{F}(\ha;s) = \chartime \mu_0  \|\partial_t\ha(s)\|_{\ltwods}^2 +
\tfrac{\mu_0}{2\chartime} (1+\permit) \|\ha(s)\|_{\ltwods}^2.
\end{align*}}
\begin{proposition}[formal energy estimate]\label{continuumenergy} The solution $\lb\bv{u},p,\bv{w},\bv{m}, \heff\rb$ of problem \eqref{eq:ferroeq}, \eqref{eq:icFERRO}, \eqref{eq:bcMNSE} and \eqref{eq:phiNeuIII} satisfies
\begin{align}
\label{finalest}
\begin{split}
\mathscr{E}(\bv{u},\bv{w},\bv{m},\heff;\tf) 
&+ \int_0^{\tf} \mathscr{D}(\bv{u},\bv{w},\bv{m},\heff;s) \, ds \\
&\leq \int_0^{\tf}\mathscr{F}(\ha;s) \, ds + \mathscr{E}(\bv{u},\bv{w},\bv{m},\heff;0).
\end{split}
\end{align}
\end{proposition}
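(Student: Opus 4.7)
The strategy is to test each PDE with an appropriately chosen function so that the time derivatives align with $\mathscr{E}$, the diffusive terms produce $\mathscr{D}$, and the coupling terms cancel in pairs. Specifically, the plan is to test \eqref{eq:ferroeq:lm} with $\bv{u}$, \eqref{eq:ferroeq:cam} with $\bv{w}$, and \eqref{eq:ferroeq:cmag} twice, first with $\mu_0 \bv{m}$ and then with $-\mu_0 \heff$. The first three testings are routine: the convective terms vanish because $\diver{u} = 0$ and $\bv{u}\cdot\normal = 0$, the pressure drops out, $(\bv{w}\times\bv{m})\cdot\bv{m} = 0$ pointwise, and the natural boundary conditions \eqref{eq:mstable2} annihilate the boundary integrals in \eqref{vectlapintparts}, producing the diffusion $\mu_0\magdiff\bigl(\|\curl{m}\|_{\ltwods}^2 + \|\diver{m}\|_{\ltwods}^2\bigr)$.

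The fourth testing is the delicate one, and the minus sign is forced by the fact that \eqref{eq:vectlapint} of Lemma~\ref{lem:ids} gives $-(\Delta\bv{m},\heff) = -\|\diver{}\heff\|_{\ltwods}^2$ under \eqref{eq:mstable2}, i.e.\ the wrong sign as diffusion. After multiplying by $-1$ the LHS gains $+\mu_0\magdiff\|\diver{}\heff\|_{\ltwods}^2$, which is the last missing piece of $\mathscr{D}$. The convective term $-\mu_0\tril(\bv{u},\bv{m},\heff)$ cancels the Kelvin force $\mu_0\tril(\bv{m},\heff,\bv{u})$ coming from the momentum equation thanks to the skew-symmetry \eqref{eq:centralident}, while the cyclic identity $(\bv{m}\times\heff)\cdot\bv{w} = (\bv{w}\times\bv{m})\cdot\heff$ cancels the magnetic torque against the $\bv{w}\times\bv{m}$ contribution. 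The remaining micropolar coupling $-2\nu_r(\curl{w},\bv{u}) - 2\nu_r(\curl{u},\bv{w})$ combines, after integration by parts and \eqref{eq:Honezdcurldiv} with $\diver{u}=0$, with $\nu_r\|\nabla\bv{u}\|_{\ltwods}^2$ and $4\nu_r\|\bv{w}\|_{\ltwods}^2$ into $\nu_r\|\curl{u} - 2\bv{w}\|_{\ltwods}^2$.

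What remains is to convert $-\mu_0(\bv{m}_t,\heff)$ into a time derivative of $\|\heff\|_{\ltwods}^2$. This is where the potential formulation intervenes: differentiating \eqref{varforphi05} in time and choosing $\phitest = \hdpot$ (so $\nabla\phitest = \heff$) yields $(\heff_t,\heff) = (\partial_t\ha - \bv{m}_t,\heff)$, hence
\[
-\mu_0(\bv{m}_t,\heff) = \tfrac{\mu_0}{2}\tfrac{d}{dt}\|\heff\|_{\ltwods}^2 - \mu_0(\partial_t\ha,\heff),
\]
producing the $\|\heff\|_{\ltwods}^2$ piece of $\mathscr{E}$ and a source $\mu_0(\partial_t\ha,\heff)$. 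A second use of \eqref{varforphi05} at fixed $t$ with $\phitest = \hdpot$ gives the algebraic identity $(\bv{m},\heff) = (\ha,\heff) - \|\heff\|_{\ltwods}^2$, which I invoke to rewrite the cross-term $-\tfrac{\mu_0(1+\permit)}{\chartime}(\bv{m},\heff)$ obtained by combining the two relaxation contributions from the $\bv{m}$- and $\heff$-testings.

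The resulting differential identity takes the form $\tfrac{d}{dt}\mathscr{E} + \mathscr{D}_0 = \mu_0(\partial_t\ha,\heff) + \tfrac{\mu_0(1+\permit)}{\chartime}(\ha,\heff)$, where $\mathscr{D}_0$ coincides with $\mathscr{D}$ except that the coefficient of $\|\heff\|_{\ltwods}^2$ is $\tfrac{\mu_0(1+2\permit)}{\chartime}$. Absorbing the right-hand side by Young's inequality with parameter $\epsilon = 1/(2\chartime)$ for the first term and a symmetric split for the second subtracts a total of $\tfrac{\mu_0}{4\chartime} + \tfrac{\mu_0(1+\permit)}{2\chartime}$ from this coefficient, leaving exactly $\tfrac{\mu_0}{2\chartime}\bigl(\tfrac{1}{2} + 3\permit\bigr)$, and puts $\chartime\mu_0\|\partial_t\ha\|_{\ltwods}^2 + \tfrac{\mu_0(1+\permit)}{2\chartime}\|\ha\|_{\ltwods}^2 = \mathscr{F}$ on the right. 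Integrating in time from $0$ to $\tf$ yields \eqref{finalest}. The main obstacle is orchestrating the $-\mu_0\heff$ testing together with the twofold use of the elliptic potential equation (differentiated in time, and at fixed time) so that every cross term either cancels cleanly or is controlled by $\mathscr{F}$.
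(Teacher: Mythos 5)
Your proposal is correct and follows essentially the same route as the paper: testing with $\bv{u}$, $\bv{w}$, $\mu_0\bv{m}$ and (up to moving terms across the equality) $\mu_0\heff$, invoking the two identities of Lemma~\ref{lem:ids}, using \eqref{varforphi05} both at fixed time and differentiated in time with $\phitest=\hdpot$, and absorbing the remaining $\ha$-terms by Young's inequality. Your final bookkeeping of the $\|\heff\|_{\ltwods}^2$ coefficient, $\tfrac{\mu_0}{\chartime}(1+2\permit)-\tfrac{\mu_0}{4\chartime}-\tfrac{\mu_0}{2\chartime}(1+\permit)=\tfrac{\mu_0}{2\chartime}\bigl(\tfrac12+3\permit\bigr)$, matches the paper's \eqref{eq:thirdstep} exactly.
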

\begin{proof}
The main ideas of this estimate come from \cite{Kalo2002,Ami2008,AmiShliomis2008}, but we now use \eqref{varforphi05} for the scalar potential associated to the magnetizing field instead. We multiply \eqref{eq:ferroeq:lm} by $\bv{u}$, \eqref{eq:ferroeq:cam} by $\bv{w}$, \eqref{eq:ferroeq:cmag} by $\mu_0 \bv{m}$, and integrate by parts. Setting $\phitest = \tfrac{\permit\mu_0}{\chartime} \hdpot$ in \eqref{varforphi05} and adding the ensuing identities yields
{\allowdisplaybreaks
\begin{align}
\label{eq:firststep}
\begin{split}
\tfrac{d}{dt} \left( \mathscr{E} - \tfrac{\mu_0}{2} \|\heff\|_{\ltwods}^2 \right)
&+ \mathscr{D} - \magdiff \mu_0 \|\diver{}\heff\|_{\ltwods}^2 
- \tfrac{\mu_0}{2\chartime} \left(\tfrac{1}{2} + \permit\right) \|\heff\|_{\ltwods}^2 \\
&= \mu_0 \tril(\bv{m},\heff,\bv{u}) + \mu_0 (\bv{m} \times \heff,\bv{w})
+ \tfrac{\permit \mu_0}{\chartime} (\ha,\nabla\hdpot) \\
&+ \magdiff \mu_0 \bdryint{\bdry}{(\curl{m}\times \normal)\cdot \bv{m}}
+\magdiff \mu_0 \bdryint{\bdry}{\diver{m} \, (\bv{m}\cdot \normal)} \, .
\end{split}
\end{align}}
Note that to form the term $\|\curl{u} - 2 \bv{w}\|_{\ltwods}^2$ in $\mathscr{D}$ we have used \eqref{eq:Honezdcurldiv} on $\|\gradv{u}\|_{\ltwods}^2$.
Multiply the magnetization equation \eqref{eq:ferroeq:cmag} by $\mu_0\heff$. Identities \eqref{eq:vectlapint} and \eqref{eq:centralident} give 
{\allowdisplaybreaks\begin{align*}
\magdiff \mu_0 \|\diver{}\heff\|_{\ltwods}^2
+ \tfrac{\permit\mu_0}{\chartime} \|\heff\|_{\ltwods}^2 &=
- \mu_0 \tril(\bv{m},\heff,\bv{u})
- \mu_0(\bv{m} \times \heff, \bv{w})
+ \mu_0 (\bv{m}_t,\heff) \\
&+ \tfrac{\mu_0}{\chartime} (\bv{m},\heff)
- \magdiff \mu_0 \bdryint{\bdry}{(\curl{m} \times \normal) \cdot \heff} \\
&- \magdiff \mu_0 \bdryint{\bdry}{\diver{m} \, (\heff\cdot \normal)} \, .
\end{align*}}
Adding this expression to \eqref{eq:firststep} we get 
{\allowdisplaybreaks\begin{align}
\label{eq:secondstep}
\begin{split}
\tfrac{d}{dt} \left( \mathscr{E} - \tfrac{\mu_0}2 \|\heff\|_{\ltwods}^2 \right)
&+ \mathscr{D} - \tfrac{\mu_0}{2\chartime} \left(\tfrac{1}{2} - \permit \right) \|\heff\|_{\ltwods}^2
= \mu_0 (\bv{m}_t,\heff)
+ \tfrac{\mu_0}{\chartime} (\bv{m},\heff) \\
&+ \tfrac{\permit \mu_0}{\chartime} (\ha,\nabla\hdpot)
+ \magdiff \mu_0 \bdryint{\bdry}{(\curl{m}\times \normal)\cdot (\bv{m}-\heff)} \\
&+ \magdiff \mu_0 \bdryint{\bdry}{\diver{m} \, (\bv{m} - \heff) \cdot \normal } \, .
\end{split}
\end{align}}
Set $\phitest = \nabla \hdpot $ in \eqref{varforphi05} to obtain 
\begin{align}
\label{phiID00}
  \|\nabla\hdpot\|_{\ltwods}^2 = (\ha - \bv{m}, \nabla\hdpot) \, .
\end{align}
Differentiate \eqref{varforphi05} with respect to time and set $\phitest = \hdpot $. This implies
\begin{align}
\label{phiID01}
\tfrac{1}{2}\tfrac{d}{dt} \|\nabla\hdpot\|_{\ltwods}^2 = (\partial_t\ha - \partial_t\bv{m}, \nabla\hdpot) \, .
\end{align}
Insert these two identities in \eqref{eq:secondstep}, and use that $\bv{h} = \nabla \varphi$, to get
\begin{align}
\label{eq:thirdstep}
\begin{split}
\tfrac{d}{dt} \mathscr{E}(\bv{u},\bv{w},\bv{m},\heff;t) 
&+ \mathscr{D}(\bv{u},\bv{w},\bv{m},\heff;t)
+ \tfrac{\mu_0}{4\chartime} (3 + 2 \permit) \|\nabla\hdpot\|_{\ltwods}^2 \\
&= \mu_0 (\partial_t \ha , \nabla\hdpot) 
+ \tfrac{\mu_0}{\chartime} (1 + \permit )(\ha,\nabla\hdpot) \\
&+ \magdiff \mu_0 \bdryint{\bdry}{(\curl{m}\times \normal)\cdot (\bv{m}-\heff)} \\
&+ \magdiff \mu_0 \bdryint{\bdry}{\diver{m} \, (\bv{m} - \heff) \cdot \normal } \, .
\end{split}
\end{align}
Notice that all the boundary integrals that appear on the right hand side of \eqref{eq:thirdstep} are multiplied by $\sigma$, so that this is already the sought energy estimate for Case 1 of \S\ref{probstat}. For Case 2, the boundary conditions \eqref{eq:mstable2} imply that the boundary integrals in \eqref{eq:thirdstep} vanish, whence
\begin{align*}
\tfrac{d}{dt} \mathscr{E}(\bv{u},\bv{w},\bv{m},\heff;t) 
&+ \mathscr{D}(\bv{u},\bv{w},\bv{m},\heff;t)
+ \tfrac{\mu_0}{4\chartime} (3 + 2 \permit) \|\nabla\hdpot\|_{\ltwods}^2 \\
&= \mu_0 (\partial_t \ha , \nabla\hdpot)
+ \tfrac{\mu_0}{\chartime} (1 + \permit )(\ha,\nabla\hdpot) \, .
\end{align*}
After suitably bounding the terms on the right hand side, integration in time yields the desired estimate \eqref{finalest}. \end{proof}
Notice that \eqref{finalest} suggests that the natural spaces to search for a solution are
{\allowdisplaybreaks\begin{align}
\label{enspaces1}
\begin{aligned}
\bv{u} &\in L^{\infty}(0,\tf,\Hspace) \cap L^2(0,\tf,\Vspace) \\
\bv{w} &\in L^{\infty}(0,\tf,\ltwod) \cap L^2(0,\tf,\hzerod) \\
\bv{m}, \ \heff &\in L^{\infty}(0,\tf,\ltwod) \cap L^2(0,\tf,\Mspace) \, .
\end{aligned}
\end{align}}
\begin{remark}[energy stability using Robin boundary conditions]
\label{stabrobin} Multiplying \eqref{eq:ferroeq:lm}, \eqref{eq:ferroeq:cam} and \eqref{eq:ferroeq:cmag} by $\permit \bv{u}$, $\permit\bv{w}$ and $\permit\mu_0 \heff$, respectively, and following the arguments of Proposition~\ref{continuumenergy} we obtain
{\allowdisplaybreaks\begin{align}
\begin{aligned}
\tfrac{d}{dt} \big( \tfrac{\permit}{2} \|\bv{u}\|_{\ltwods}^2
&+ \tfrac{\permit \inertiamom}{2} \|\bv{w}\|_{\ltwods}^2
+ \tfrac{\mu_0}{2} \|\bv{m}\|_{\ltwods}^2 \big)
+ \permit \nu \|\gradv{u}\|_{\ltwods}^2
+ \permit c_1 \|\gradv{w}\|_{\ltwods}^2 \\
&+ \magdiff \mu_0 \|\curl{m}\|_{\ltwods}^2 
+ \magdiff \mu_0 (1+\permit) \|\diver{m}\|_{\ltwods}^2
+ \permit c_2 \|\diver{w}\|_{\ltwods}^2 \\
&+ \permit \nu_r \|\curl{u} - 2 \bv{w}\|_{\ltwods}^2 
+ \tfrac{\mu_0}{\chartime} \|\bv{m}\|_{\ltwods}^2
+ \tfrac{\mu_0 \permit }{\chartime} (1+\permit) \|\nabla\hdpot\|_{\ltwods}^2 \\
&= \permit \mu_0 (\bv{m}_t,\heff)
+ \tfrac{\permit\mu_0}{\chartime} (\bv{m},\heff)
+ \tfrac{\permit \mu_0}{\chartime} (\ha,\nabla\hdpot)\\
&+ \magdiff \mu_0 \bdryint{\bdry}{(\curl{m}\times \normal)\cdot (\bv{m}-\permit\heff)} \\
&+ \magdiff \mu_0 \bdryint{\bdry}{\diver{m} \, (\bv{m} - \permit \heff) \cdot \normal } \, .
\end{aligned}
\end{align}}
The Robin-type boundary conditions \eqref{eq:mstable} and identities \eqref{phiID00} and \eqref{phiID01} yield
\begin{align}
\begin{aligned}
\tfrac{d}{dt} \big( \tfrac{\permit}{2} \|\bv{u}\|_{\ltwods}^2
&+ \tfrac{\permit \inertiamom}{2} \|\bv{w}\|_{\ltwods}^2
+ \tfrac{\mu_0}{2} \|\bv{m}\|_{\ltwods}^2
+ \tfrac{\permit\mu_0}{2} \|\nabla\hdpot\|_{\ltwods}^2 \big)
+ \permit \nunot \|\gradv{u}\|_{\ltwods}^2 \\
&+ \permit c_1 \|\gradv{w}\|_{\ltwods}^2 
+ \magdiff \mu_0 \|\curl{m}\|_{\ltwods}^2
+ \magdiff \mu_0 (1+\permit) \|\diver{m}\|_{\ltwods}^2 \\
& + \permit c_2 \|\diver{w}\|_{\ltwods}^2
+ \permit \nu_r \|\curl{u} - 2 \bv{w}\|_{\ltwods}^2
+ \tfrac{\mu_0}{\chartime} \|\bv{m}\|_{\ltwods}^2 \\
&+ \tfrac{\mu_0 \permit }{\chartime} (2+\permit) \|\nabla\hdpot\|_{\ltwods}^2
+ \magdiff \mu_0 \gamma \bdryint{\bdry}{|(\bv{m}-\permit\heff) \times \normal|^2 } + \\
&+ \magdiff \mu_0 \gamma \bdryint{\bdry}{|(\bv{m} - \permit \heff) \cdot \normal|^2 } \\
&= \permit \mu_0 (\partial_t\ha,\nabla\hdpot)
+ \tfrac{2 \permit \mu_0}{\chartime} (\ha,\nabla\hdpot) \, .
\end{aligned}
\end{align}
A trivial application of the Cauchy-Schwarz inequality shows that the system is energy-stable with the boundary conditions \eqref{eq:mstable}. Note that we also have control of additional boundary terms. 
\end{remark}
\begin{remark}[Neumann boundary conditions]
\label{rem:ppp}
If we were to supplement the system with the boundary conditions $\bv{m} \cdot \normal = \heff \cdot \normal = 0$ we would obtain, for $\hdpot$, problem \eqref{eq:phiNeuIII} with homogeneous Neumann data ($\partial_{\normal}\hdpot = 0 \text{ on }\bdry$). This is used, for instance, in \cite{Ami2010}. However, with the present techinques we would not be able to obtain an energy estimate.
\end{remark}
%
%
\subsection{Existence results}
\label{relatedexistence}
To date, the results concerning existence of solutions for the equations of ferrohydrodynamics \eqref{eq:ferroeq} available in the literature are as follows:
\begin{enumerate}
\item[1.] \textbf{Local strong solution \cite{Ami2010}.} Let $\magdiff = 0$ and $\heff = \nabla \vartheta$, where $\vartheta$ solves
\begin{align}
\label{poiss1}
-\Delta \vartheta = \diver{}(\bv{m} - \ha) \ \text{in } \Omega \, , \
\frac{\partial\vartheta}{\partial \normal} = - \bv{m}\cdot \normal \ \text{on } \bdry \, ,
\end{align}
then, for $q > 3$ and $r = \min\{q,6\}$, there exists a time $T^* > 0$ for which problem \eqref{eq:ferroeq} has a unique strong solution  $\lb\bv{u},p,\bv{w},\bv{m}, \heff\rb$ such that
\begin{align*}
&\bv{u} \in L^{\infty}(0,T^*,\htwod \cap \Vspace) \, \cap \, W_{\infty}^{1}(0,T^*, \Hspace) \, \cap \, L^2(0,T^*, \bv{W}_{r}^{2}(\Omega)) \, ,\\
&p \in L^2(0,T^*,W_r^{1}(\Omega)) \, , \\
&\bv{w} \in L^{\infty}(0,T^*,\htwod \cap \hzerod) \, \cap \, W_{\infty}^{1}(0,T^*, \ltwod) \, \cap \, L^2(0,T^*, \bv{H}^{3}(\Omega)) \, , \\
&\bv{m},\heff \in L^{\infty}(0,T^*,\bv{W}_{\infty}^{1}(\Omega)) \, \cap \, W_{\infty}^{1}(0,T^*,\bv{L}^q(\Omega)) \, ,
\end{align*}
provided that the data $\lb \bv{u}_0,\bv{w}_0, \bv{m}_0, \ha \rb$ are sufficiently small and regular, i.e.
\begin{align*}
\bv{u}_0 \in \htwod \cap \Vspace \  ,  \ \
\bv{w}_0 \in \htwod \cap \hzerod \ , \ \
\bv{m}_0 \in W^{1}_q(\Omega) \ , \ \ 
\diver{}{\ha} \in W^{1}_{\infty}(0,T^*,L^{q}(\Omega)) \, . 
\end{align*}
\item[2.] \textbf{Global weak solutions \cite{Ami2008}.} Let $\magdiff > 0$ and $\heff = \nabla \vartheta$, where $\vartheta$ solves
\begin{align}
\label{poiss2}
-\Delta \vartheta = \diver{}(\bv{m} - \ha) \ \text{in } \Omega \, , \
\frac{\partial\vartheta}{\partial \normal} = 0 \ \text{on } \bdry  \, ,
\end{align}
and consider the magnetic boundary conditions $\curl{m} \times \normal = 0$ and $\bv{m} \cdot \normal = 0$ for $\bv{m}$.  Then for every set of data $\lb \bv{u}_0,\bv{w}_0, \bv{m}_0, \ha \rb$ that satisfies
\begin{align*}
\begin{gathered}
\bv{u}_0 \in \Hspace \  ,  \ \
\bv{w}_0 \in \ltwod \ , \ \
\bv{m}_0 \in \ltwod \ , \ \
\diver{}{\ha} \in H^{1}(0,\tf,\ltwo) \, .
\end{gathered}
\end{align*}
there is a global in time weak solution $\lb\bv{u},p,\bv{w},\bv{m}, \heff\rb$ of problem \eqref{eq:ferroeq} such that
\begin{align*}
&\bv{u} \in L^{\infty}(0,\tf,\Hspace) \, \cap \, L^{2}(0,\tf,\Vspace) \, \cap \, \mathcal{C}_w([0,\tf],\Hspace) \, , \\
&\bv{w} \in L^{\infty}(0,\tf,\ltwod) \, \cap \, L^{2}(0,\tf,\hzerod) \, \cap \, \mathcal{C}_w([0,\tf],\ltwod) \, , \\
&\bv{m} \in L^{\infty}(0,\tf,\ltwod) \, \cap \, L^{2}(0,\tf,\Mspace) \, \cap \, \mathcal{C}_w([0,\tf],\ltwod) \, , \\
&\heff \in L^{\infty}(0,\tf,\ltwod) \, \cap \, L^{2}(0,\tf,\honed)  ,
\end{align*}
where for a Hilbert space $\mathcal{H}$ we denote by $\mathcal{C}_w([0,\tf],\mathcal{H})$ the space of functions $f:[0,\tf] \to \mathcal{H}$ that are continuous in the weak topology of $\mathcal{H}$.
\end{enumerate}
Note that the BVPs for the magnetic potential \eqref{poiss1} and \eqref{poiss2} are different from the one proposed in \eqref{eq:phiNeuIII}. The BVPs \eqref{poiss1} and \eqref{poiss2} are not appropriate to capture the effects of an external magnetic field $\ha$. In fact, if $\ha$ is divergence-free (a physically reasonable $\ha$ in the context of dielectric media should be divergence-free, according to Remark \ref{remHarmonic}), the BVPs \eqref{poiss1} and \eqref{poiss2} would reduce to $\heff = \hd$ (as defined in \eqref{eq:phiNeu} and \eqref{eq:phiNeuII} respectively), with no effect from $\ha$, so that the behavior of the system would reduce to relaxation to equilibrium. In this sense, the BVP proposed in \eqref{eq:phiNeuIII} is a much more physically realistic approximation to the effective magnetic field than \eqref{poiss1} or \eqref{poiss2}.
%
%
\section{Discretization}
\label{sec:Discretization}
\subsection{Notation}
We introduce $K > 0$ to denote the number of time steps, define the time step as
$\dt = \tf/K >0$ and set $t^k = k\dt$ for $0\leq k \leq K$. For
$\varrho : [0,\tf] \rightarrow E$, we set 
\begin{align}\label{ptevaldef}
\varrho^k = \varrho(t^k).
\end{align}
A sequence will be denoted by  $\varrho^\dt = \lb \varrho^k\rb_{k=0}^{K}$ and we introduce the following norms:
\begin{align*}
\|\varrho^\dt \|_{\ell^\infty(E)} := \max_{0 \leq k \leq K} \|\varrho^k\|_E \, , \ \
\|\varrho^\dt \|_{\ell^{q}(E)} = \alp \sum_{k=0}^K \dt \|\varrho^k\|_E^{q} \arp^{\!\nicefrac{1}{q}} \, , \  \
q \in [1,\infty) \, .
\end{align*}
We also define the backward difference operator $\inc$: 
\begin{align}\label{backwarddifferenceop}
  \delta\varrho^k := \varrho^k - \varrho^{k-1} . 
\end{align}
The following identity will be used repeatedly
\begin{align}\label{sumid}
2 (a,a-b) = |a|^2 - |b|^2 + |a-b|^2 \, .
\end{align}
Similarly, the following ``summation by parts'' formula (also called Abel's transformation) will be used in this work:
\begin{align}\label{summation}
\sum_{k = 1}^{K} a^k \,  \inc b^k = a^K b^K - a^0 b^0 
- \sum_{k = 1}^{K-1} b^k \, \inc a^{k+1} \, . 
\end{align}
For the space discretization we introduce finite dimensional subspaces $\FEspaceU \subset \hzerod$, $\FEspaceP \subset \ltwo$, $\FEspaceW \subset \hzerod$, $\FEspaceM \subset \ltwod$ and $\FEspacePhi \subset \hone$, where we will approximate the linear velocity, pressure, angular velocity, magnetization and magnetic potential, respectively. About the pair of spaces $\lb \FEspaceU,\FEspaceP \rb$ we assume that they are LBB stable, meaning that there exists $\beta^* > 0$ independent of the discretization parameter $h$ such that
\begin{equation}
\label{discreteinfsup}
 \inf_{0\neq\Ptest \in \FEspaceP} \sup_{0\neq\Utest \in \FEspaceU} 
  \frac{( \diver{}\Utest, \Ptest )}{\|\Ptest\|_{\ltwods} \|\nabla\Utest\|_{\ltwods} } \geq \beta^* \, . 
\end{equation}
Specific construction and examples of finite element spaces satisfying this condition can be found in \cite{Girault,ErnGuermond}.

To be able to focus on the fundamental difficulties in the design of an energy stable scheme we will first describe the scheme without being specific on the particular structure of these spaces. As we will see, their choice shall come naturally from this analysis.

We introduce a discretization of the trilinear form $\tril(\cdot,\cdot,\cdot)$ in the equations for the linear and angular velocities: 
\begin{align*}
\tril_h(\cdot,\cdot, \cdot) : \FEspaceU \times (\FEspaceU + \FEspaceW ) \times (\FEspaceU + \FEspaceW ) \to \mathbb{R}. 
\end{align*}
We assume that $\tril_h(\cdot,\cdot,\cdot)$ is skew-symmetric with respect to its last two arguments, i.e.
\begin{align}
\label{eq:bhmskewns}
\tril_h(\bv{U},\bv{V},\bv{W}) = - \tril_h(\bv{U},\bv{W},\bv{V}), \quad
\forall \, \bv{U}\in \FEspaceU; \, \bv{V},\bv{W} \in (\FEspaceU + \FEspaceW) .
\end{align}
Similarly, we will also need another discretization for the trilinear form associated to the Kelvin force $\mu_0(\bv{m} \cdot \nabla) \bv{h}$ in \eqref{eq:ferroeq:lm} and the convective term of the magnetization equation  $(\bv{u}\cdot\nabla) \bv{m}$ in \eqref{eq:ferroeq:cmag}
\begin{align*}
 \trilm(\cdot, \cdot,\cdot): \FEspaceU \times \FEspaceM \times \FEspaceM \to \mathbb{R} \, ,
\end{align*}
and we will also assume that it is skew-symmetric with respect to its last two arguments
\begin{align}
\label{eq:bhmskew}
\trilm(\bv{U},\bv{V},\bv{W}) = - \trilm(\bv{U},\bv{W},\bv{V})\, , \
\forall \, \bv{U} \in \FEspaceU \, ; \ \bv{V},\bv{W} \in \FEspaceM \, .
\end{align}
Finally, we introduce a consistent discretization of the vector Laplacian
\begin{align*}
a_h(\cdot,\cdot): \FEspaceM \times \FEspaceM \to \mathbb{R} \, ,
\end{align*}
which we assume is coercive, that is
\begin{align}\label{positivityprop}
a_h(\bv{M},\bv{M}) \geq \cstab | \bv{M} |_{a}^2 \ \ \forall \, \bv{M} \in \FEspaceM \, ,
\end{align}
for a discrete semi-norm $|\cdot|_{a}$ to be specified later.

We finally assume that, if $\mathbb{A}$ is either $\FEspaceU$, $\FEspaceW$ or $\FEspaceM$, there is an interpolation operator
\begin{align}\label{interpolants} 
\text{I}_{\mathbb{A}} :\boldsymbol{\mathcal{C}}^0(\overline{\Omega}) \rightarrow \mathbb{A}  \,, 
\end{align}
with suitable approximation properties, that is there is a positive integer $\polydegree$ for which
\begin{align}\label{optestim}
\begin{split}
\|\text{I}_{\mathbb{A}}\lambda - \lambda\|_{\ltwos}
+ h \|\nabla(\text{I}_{\mathbb{A}}\lambda - \lambda)\|_{\ltwos} &\leq
c \, h^{\polydegree +1}  |\lambda|_{H^{\polydegree+1}} \, , \\
\|\text{I}_{\mathbb{A}}\lambda - \lambda\|_{\linfs}
+ h \|\nabla(\text{I}_{\mathbb{A}}\lambda - \lambda)\|_{\linfs}
 &\leq 
c \, h^{\polydegree +1} |\lambda|_{W_{\infty}^{\polydegree+1}} \, .
\end{split}
\end{align}
More notation and details about the space discretization will be provided in \S\ref{sub:spacedisc}. Here we confine ourselves to mention that they can be easily constructed using finite elements (see for instance \cite{Ciar78,ErnGuermond}), in which case the interpolation operators $\text{I}_{\mathbb{A}}$ are nothing but Lagrange interpolation.
Now we present a fully discrete scheme and show that it is unconditionally stable. This result will, in a sense, reproduce the formal energy estimate of Proposition~\ref{energyest}. In addition, it will serve as the basis for a proof of existence of discrete solutions in \S\ref{localsolv} as well as for a proof of convergence towards weak solutions in a simplified case in \S\ref{convschemesec}.
%
\subsection{Scheme}\label{firstschemesec}
In order to avoid unnecessary technicalities, assume that the initial data is smooth and initialize the scheme as follows: 
\begin{align}\label{initROSens}
\bv{U}^{0} = \text{I}_{\FEspaceU}[\bv{u}(0)] \ , \ \
\bv{W}^{0} = \text{I}_{\FEspaceW}[\bv{w}(0)] \ , \ \
\bv{M}^{0} = \text{I}_{\FEspaceM}[\bv{m}(0)] \ , 
\end{align}
after that, for every $k\in \{1,\ldots,K\}$ we compute $\{\bv{U}^k,P^k,\bv{W}^k,\bv{M}^k,\Phi^k\} \in \FEspaceU \times \FEspaceP \times \FEspaceW \times \FEspaceM \times \FEspacePhi$ that solves
\begin{subequations}
\label{firstschemeROS}
\begin{align}
\label{eq:discmom}
\begin{split}
\alp \tfrac{ \inc\bv{U}^{k}}\dt, \Utest \arp
&+ \tril_h\lp\bv{U}^{k},\bv{U}^{k},\Utest\rp + \nunot \lp \gradv{U}^{k},\gradv{}\Utest \rp
- \lp P^{k}, \diver{}\Utest \rp \\
&= 2 \nu_r \lp \curl{W}^{k},\Utest \rp + \mu_0 \trilm\lp \Utest, \bv{H}^{k}, \bv{M}^{k}\rp \, , 
\end{split} \\
\lp \Ptest, \diver{U}^k\rp &= 0 \, , \\
\begin{split}
\inertiamom  \alp \tfrac{\inc\bv{W}^{k}}\dt,\Wtest \arp
&+ \inertiamom \tril_h\lp\bv{U}^{k},\bv{W}^{k},\Wtest\rp 
+ c_1 \lp \gradv{W}^{k}, \nabla \Wtest \rp 
+ c_2 \lp \diver{W}^{k},\diver{}\Wtest \rp \\
&+ 4 {\nu}_r \lp \bv{W}^{k}, \Wtest \rp
= 2\nu_r \lp \curl{U}^{k}, \Wtest \rp
+ \mu_0 \lp \bv{M}^{k} \times \bv{H}^{k},\Wtest \rp \, , 
\end{split} \\
\begin{split}
\alp \tfrac{\inc\bv{M}^{k}}\dt,\Mtest \arp
&-\trilm\lp\bv{U}^{k},\Mtest,\bv{M}^{k}\rp
+ \magdiff \, a_h(\bv{M}^k,\Mtest)
+ \lp\bv{M}^k\times\bv{W}^k,\Mtest\rp \\
&+ \tfrac{1}{\chartime} \lp \bv{M}^k,\Mtest\rp
+ \magdiff \gamma (\bv{M}^k\times \normal, \bv{Z} \times \normal)_{\bdry} 
+ \magdiff \gamma (\bv{M}^k\cdot \normal, \bv{Z} \cdot \normal)_{\bdry} \\
&= \tfrac{1}{\chartime} \lp \permit \bv{H}^k,\Mtest\rp
+ \magdiff \gamma (\permit \bv{H}^k\times \normal, \bv{Z} \times \normal)_{\bdry} \\
&+ \magdiff \gamma (\permit \bv{H}^k \cdot \normal, \bv{Z} \cdot \normal)_{\bdry} \, , 
\end{split} \\
\label{DiscPoisson}
(\nabla\hdpoth^k,\nabla\Phitest) &= (\ha^k -\bv{M}^k,\nabla\Phitest) \, ,
\end{align}
\end{subequations}
for all $\Utest \in \FEspaceU$, $\Wtest \in \FEspaceW$, $\Mtest \in \FEspaceM$, $\Phitest \in \FEspacePhi$, where $\bv{H}^k := \nabla\hdpoth^k$. Notice that a discrete analogue of \eqref{eq:centralident} is built into the scheme because of the term $\mu_0 \trilm\lp \Utest, \bv{H}^{k}, \bv{M}^{k}\rp$ in the right hand side of \eqref{eq:discmom}. The initialization proposed in \eqref{initROSens} is the simplest choice and it is used because of that reason. From the point of view of convergence to strong solutions (a priori error estimates) it is suboptimal (\cf~\cite{Tom13,MR2249024,ErnGuermond,DiPi10}). However, the choice of initialization has no effect on the stability of the scheme; it only affects the regularity assumed on the initial data. 

We now present the stability of scheme \eqref{firstschemeROS}. To shorten the presentation we denote
\begin{subequations}
\label{eq:defofmatscrs}
\begin{align*}
\mathscr{E}_{h,\dt}^k(\bv{U}^\dt,\bv{W}^\dt,\bv{M}^\dt,\bv{H}^\dt) &= 
\tfrac{1}{2} \big( \|\bv{U}^k\|_{\ltwods}^2
+ \inertiamom \|\bv{W}^k\|_{\ltwods}^2
+ \mu_0 \|\bv{M}^k\|_{\ltwods}^2 
+ \mu_0 \|\bv{H}^k\|_{\ltwods}^2 \big) \, , \\
\mathscr{I}_{h,\dt}^k(\bv{U}^\dt,\bv{W}^\dt,\bv{M}^\dt,\bv{H}^\dt) &= \mathscr{E}_{h,\dt}^k(\inc \bv{U}^\dt,\inc \bv{W}^\dt,\inc \bv{M}^\dt, \inc \bv{H}^\dt), \\
\mathscr{D}_{h,\dt}^k(\bv{U}^\dt,\bv{W}^\dt,\bv{M}^\dt,\bv{H}^\dt) &= 
\nu \|\gradv{U}^k\|_{\ltwods}^2
+ c_1 \|\gradv{W}^k\|_{\ltwods}^2
+ \nu_r \|\diver{U}^k\|_{\ltwods}^2 \\
&+ c_2 \|\diver{W}^k\|_{\ltwods}^2 
+ \nu_r \|\curl{}\bv{U}^k - 2 \bv{W}^k\|_{\ltwods}^2 \\
&+ \tfrac{\mu_0}{\chartime} \|\bv{M}^k\|_{\ltwods}^2
+ \tfrac{\mu_0}{2\chartime} \left(\tfrac{1}{2} + 3 \permit \right) \|\bv{H}^k\|_{\ltwods}^2 \, , \\
\mathscr{F}^k(\ha) &= \tfrac{\chartime \mu_0}{\dt}  \int_{t^{k-1}}^{t^k}  \|\partial_t\ha(s)\|_{\ltwods}^2 \, ds +
\tfrac{\mu_0}{2\chartime} (1+\permit) \|\ha^k\|_{\ltwods}^2 \, .
\end{align*}
\end{subequations}
%
%
\begin{proposition}[discrete stability]
\label{disclemma}
Let $\magdiff = 0$, and $\lb\bv{U}^{\dt},P^{\dt},\bv{W}^{\dt},\bv{M}^{\dt},\hdpoth^{\dt} \rb \subset \FEspaceU \times \FEspaceP \times \FEspaceW \times \FEspaceM \times \FEspacePhi$ solve \eqref{firstschemeROS}. If $\nabla \FEspacePhi \subset \FEspaceM$, \eqref{discreteinfsup}-\eqref{eq:bhmskew} hold, then we have the following estimate
\begin{align*}
\mathscr{E}_{h,\dt}^K + \dt^{-1} \left\| \mathscr{I}_{h,\dt}^\dt \right\|_{\ell^1} +
  \left\| \mathscr{D}_{h,\dt}^\dt \right\|_{\ell^1}
  \leq \left\| \mathscr{F}^\dt \right\|_{\ell^1} + 
  \mathscr{E}_{h,\dt}^0 \, , 
\end{align*}
where $\mathscr{E}_{h,\dt}^k := \mathscr{E}_{h,\dt}^k(\bv{U}^\dt,\bv{W}^\dt,\bv{M}^\dt,\bv{H}^\dt)$, $\mathscr{I}_{h,\dt}^k := \mathscr{I}_{h,\dt}^k(\bv{U}^\dt,\bv{W}^\dt,\bv{M}^\dt,\bv{H}^\dt)$, $\mathscr{D}_{h,\dt}^k := \mathscr{D}_{h,\dt}^k(\bv{U}^\dt,\bv{W}^\dt,\bv{M}^\dt,\bv{H}^\dt)$ and $\mathscr{F}_{h,\dt}^k := \mathscr{F}^k(\ha)$.
\end{proposition}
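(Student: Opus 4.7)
The proof will mimic the continuous argument in Proposition \ref{continuumenergy} at the fully discrete level, with the algebraic identity \eqref{sumid} playing the role of integration by parts in time and the compatibility $\nabla \FEspacePhi \subset \FEspaceM$ playing the role of the curl-free structure of $\heff$. The end product is obtained by (i) performing an appropriately weighted test of \eqref{firstschemeROS}, (ii) forming a telescoping discrete energy balance, and (iii) summing over $k$ and applying Young's inequality to the forcing terms involving $\ha$.

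\textbf{Step 1 (basic tests).} I would first test \eqref{eq:discmom} with $\Utest = \bv{U}^k$, the angular-momentum equation with $\Wtest = \bv{W}^k$, and the magnetization equation with $\Mtest = \mu_0\bv{M}^k$. The pressure term disappears since $\bv{U}^k$ is discretely solenoidal against $P^k$. Skew-symmetry \eqref{eq:bhmskewns}, \eqref{eq:bhmskew} annihilates $\tril_h(\bv{U}^k,\bv{U}^k,\bv{U}^k)$, $\tril_h(\bv{U}^k,\bv{W}^k,\bv{W}^k)$ and $\trilm(\bv{U}^k,\bv{M}^k,\bv{M}^k)$, and $(\bv{M}^k\times\bv{W}^k,\bv{M}^k)=0$. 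Applying \eqref{sumid} to each time-difference pairing yields the telescoping contributions toward $\mathscr{E}_{h,\dt}^k-\mathscr{E}_{h,\dt}^{k-1}$ and the quadratic increments inside $\mathscr{I}_{h,\dt}^k$. The pair of Coriolis-type terms $2\nu_r(\curl{W}^k,\bv{U}^k)+2\nu_r(\curl{U}^k,\bv{W}^k)-4\nu_r\|\bv{W}^k\|^2$ combines with $\nu\|\nabla\bv{U}^k\|^2$ and identity \eqref{eq:Honezdcurldiv} to produce $\nunot\|\curl{U}^k-2\bv{W}^k\|^2+\nu_r\|\diver{U}^k\|^2$. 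Finally, I would test \eqref{DiscPoisson} with $\Phitest = \tfrac{\permit\mu_0}{\chartime}\hdpoth^k$ so that the term $\tfrac{\mu_0 \permit}{\chartime}(\bv{H}^k,\bv{M}^k)$ generated from the magnetization test cancels against the corresponding contribution of the Poisson identity, leaving $\tfrac{\mu_0\permit}{\chartime}\|\bv{H}^k\|^2 - \tfrac{\mu_0\permit}{\chartime}(\ha^k,\bv{H}^k)$.

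\textbf{Step 2 (the main obstacle: the Kelvin/torque coupling).} What remains on the right-hand side is $\mu_0\trilm(\bv{U}^k,\bv{H}^k,\bv{M}^k)+\mu_0(\bv{M}^k\times\bv{H}^k,\bv{W}^k)$. This is the discrete counterpart of the combination eliminated in the continuous proof via \eqref{eq:centralident}, and its disposal is the crux of the argument. The key observation, which uses the standing hypothesis $\nabla\FEspacePhi\subset\FEspaceM$, is that $\bv{H}^k=\nabla\hdpoth^k$ is an admissible test function for the magnetization equation. Testing \eqref{firstschemeROS} (magnetization line) with $\Mtest=\mu_0\bv{H}^k$, using \eqref{eq:bhmskew} with its second argument equal to $\bv{H}^k$, and the scalar triple product identity $(\bv{M}^k\times\bv{W}^k)\cdot\bv{H}^k=-(\bv{M}^k\times\bv{H}^k)\cdot\bv{W}^k$, yields
\begin{equation*}
  \mu_0\trilm(\bv{U}^k,\bv{H}^k,\bv{M}^k)+\mu_0(\bv{M}^k\times\bv{H}^k,\bv{W}^k)
  = \mu_0\bigl(\tfrac{\inc\bv{M}^k}{\dt},\bv{H}^k\bigr)+\tfrac{\mu_0}{\chartime}(\bv{M}^k,\bv{H}^k)-\tfrac{\mu_0\permit}{\chartime}\|\bv{H}^k\|_{\ltwods}^2.
\end{equation*}
This reduces the task to controlling the discrete time derivative $\mu_0(\inc\bv{M}^k/\dt,\bv{H}^k)$.

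\textbf{Step 3 (time-differenced Poisson and conclusion).} To handle $\mu_0(\inc\bv{M}^k/\dt,\bv{H}^k)$ I would subtract \eqref{DiscPoisson} at steps $k$ and $k-1$, test with $\Phitest=\mu_0\hdpoth^k$, and invoke \eqref{sumid} with $a=\nabla\hdpoth^k$, $b=\nabla\hdpoth^{k-1}$ to obtain the discrete counterpart of \eqref{phiID01}:
\begin{equation*}
  \tfrac{\mu_0}{2\dt}\bigl(\|\bv{H}^k\|_{\ltwods}^2-\|\bv{H}^{k-1}\|_{\ltwods}^2+\|\inc\bv{H}^k\|_{\ltwods}^2\bigr)
  = \mu_0\bigl(\tfrac{\inc\ha^k}{\dt}-\tfrac{\inc\bv{M}^k}{\dt},\bv{H}^k\bigr).
\end{equation*}
Substituting this into the balance produced in Steps 1--2 delivers the discrete analogue of \eqref{eq:thirdstep}: a telescoping equality in which the left-hand side contains $\tfrac{1}{\dt}(\mathscr{E}_{h,\dt}^k-\mathscr{E}_{h,\dt}^{k-1})+\tfrac{1}{\dt}\mathscr{I}_{h,\dt}^k+\mathscr{D}_{h,\dt}^k$ and the right-hand side consists of two data terms, $\mu_0(\inc\ha^k/\dt,\bv{H}^k)$ and $\tfrac{\mu_0}{\chartime}(1+\permit)(\ha^k,\bv{H}^k)$. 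Multiplying by $\dt$, summing over $k=1,\ldots,K$, applying Cauchy--Schwarz to $\inc\ha^k=\int_{t^{k-1}}^{t^k}\partial_t\ha(s)\,ds$, and Young's inequality with weights chosen to absorb the resulting $\|\bv{H}^k\|_{\ltwods}^2$ contributions into the coefficient $\tfrac{\mu_0}{2\chartime}(\tfrac{1}{2}+3\permit)\|\bv{H}^k\|_{\ltwods}^2$ present in $\mathscr{D}_{h,\dt}^k$, produces exactly the stated bound, with $\sigma=0$ automatically eliminating the boundary contributions in the magnetization line of \eqref{firstschemeROS}.
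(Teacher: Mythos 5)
Your proposal is correct and follows essentially the same route as the paper: the same weighted tests, the same use of $\bv{M}=\mu_0\bv{H}^k$ (justified by $\nabla\FEspacePhi\subset\FEspaceM$) to cancel the Kelvin-force and torque couplings, and the same pair of identities from the discrete Poisson equation and its time increment before summing and applying Young's inequality. The only slip is cosmetic: the coefficient of $\|\curl{}\bv{U}^k-2\bv{W}^k\|_{\ltwods}^2$ produced by combining the Coriolis terms with \eqref{eq:Honezdcurldiv} is $\nu_r$, not $\nunot$.
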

\begin{proof} Set $\Utest = 2 \dt \bv{U}^k$, $\Wtest = 2 \dt \bv{W}^k$, $\Mtest = 2 \dt \mu_0 \bv{M}^k$, $\nabla\Phitest =  \tfrac{2 \dt \permit \mu_0}{\chartime}  \nabla\hdpoth^k$ in \eqref{firstschemeROS} and add the results. Using \eqref{eq:Honezdcurldiv} and the identity \eqref{sumid}, we get
\begin{align}
\label{firststepdisc}
\begin{split}
2 \inc \mathscr{E}_{h,\dt}^k 
&- \mu_0 \inc \| \bv{H}^k \|_{\ltwods}^2 
+ 2 \mathscr{I}_{h,\dt}^k 
- \mu_0 \| \inc \bv{H}^k \|_{\ltwods}^2
+ 2 \dt \mathscr{D}_{h,\dt}^k \\
&- \tfrac{\mu_0 \dt}{\chartime} \left( \tfrac{1}{2} + \permit \right) \| \nabla\hdpoth^k \|_{\ltwods}^2 
= 2 \mu_0 \dt \trilm\lp \bv{U}^k, \bv{H}^{k},\bv{M}^{k}\rp \\
&+ 2 \mu_0 \dt (\bv{M}^{k} \times \bv{H}^{k},\bv{W}^k)
+ \tfrac{2 \mu_0 \permit \dt}{\chartime} (\ha^k,\nabla\hdpoth^k) \, .
\end{split}
\end{align}
As in the proof of Proposition~\ref{continuumenergy}, to deal with the trilinear terms $2 \mu_0 \dt \trilm\lp \bv{U}^k, \bv{H}^{k},\bv{M}^{k}\rp$ and $2 \mu_0 \dt (\bv{M}^{k} \times \bv{H}^{k},\bv{W}^k)$ we set $\Mtest = 2 \mu_0 \dt\bv{H}^k$. Notice that $\bv{H}^k = \nabla \Phi^k$ is, by assumption, a valid test function. In doing so we obtain
\begin{align}
\label{identity}
\begin{split}
\tfrac{2 \mu_0 \dt \permit}{\chartime} \| \nabla\hdpoth^k \|_{\ltwods}^2 
&= - 2\mu_0 \dt \trilm\lp \bv{U}^{k}, \bv{H}^k,\bv{M}^{k} \rp
- 2\mu_0 \dt \lp\bv{M}^k\times\bv{H}^k,\bv{W}^k \rp \\
&+ 2\mu_0 \alp \inc\bv{M}^k,\nabla\hdpoth^k \arp
+ \tfrac{2\mu_0 \dt}{\chartime} \lp\bv{M}^k,\nabla\hdpoth^k \rp \, .
\end{split}
\end{align}
Adding \eqref{identity} to \eqref{firststepdisc} we obtain
\begin{align}
\label{secstepdisc}
\begin{split}
2 \inc \mathscr{E}_{h,\dt}^k
&- \mu_0 \inc \| \bv{H}^k \|_{\ltwods}^2
+ 2 \mathscr{I}_{h,\dt}^k
- \mu_0 \| \inc \bv{H}^k \|_{\ltwods}^2
+ 2 \dt \mathscr{D}_{h,\dt}^k  \\
&+ \tfrac{\mu_0 \dt}{\chartime} \left( \permit - \tfrac{1}{2} \right) \| \nabla\hdpoth^k \|_{\ltwods}^2 
= 2\mu_0 \alp \inc\bv{M}^k,\nabla\hdpoth^k \arp \\
&+ \tfrac{2\mu_0\dt}{\chartime} \lp\bv{M}^k,\nabla\hdpoth^k \rp
+ \tfrac{2\mu_0\permit \dt}{\chartime} (\ha^k,\nabla\hdpoth^k) \, .
\end{split}
\end{align}
We now must obtain discrete analogues of \eqref{phiID00} and \eqref{phiID01}. To do so, we set $\Phitest = \Phi^k$ to obtain
\begin{align}
\label{phiID00disc}
  \|\nabla\hdpoth^k \|_{\ltwods}^2 =  (\ha^k - \bv{M}^k, \nabla\hdpoth^k) \, .
\end{align}
Taking increments on \eqref{DiscPoisson} and setting $\Phitest = \Phi^k$ yields
\begin{align}
\label{phiID01disc}
  (\inc\nabla\hdpoth^k, \nabla\hdpoth^k) = (\inc\ha^k - \inc\bv{M}^k, \nabla\hdpoth^k) \, .
\end{align}
Adding suitable multiples of \eqref{phiID00disc} and \eqref{phiID01disc} to \eqref{secstepdisc}, and dividing everything by 2, we obtain
\begin{align}
\label{prevestimates}
\begin{split}
\mathscr{E}_{h,\dt}^k + \mathscr{I}_{h,\dt}^k + \dt \mathscr{D}_{h,\dt}^k 
+ \tfrac{\mu_0 \dt}{2 \chartime} \left( \tfrac{3}{2} + \permit \right) \| \nabla\hdpoth^k \|_{\ltwods}^2 
&= \mathscr{E}_{h,\dt}^{k-1} +
\mu_0 \alp \inc\ha^k,\nabla\hdpoth^k \arp \\
&+ \tfrac{\mu_0 \dt}{\chartime} (1+ \permit )\lp \ha^k,\nabla\hdpoth^k \rp \, .
\end{split}
\end{align}
Adding over $k$, using the trivial identity $\|\inc\ha^k\|_{\ltwods}^2 = \dt^2 \LN\tfrac{\inc\ha^k}{\dt}\RN_{\ltwods}^2$ and the estimate
\begin{align*}
\LN\tfrac{\inc\ha^k}{\dt}\RN_{\ltwods}^2 \leq \tfrac{1}{\dt} \int_{t^{k-1}}^{t^k} \|\partial_t \ha(s)\|_{\ltwods}^2 ds \, ,
\end{align*}
to control $\dt^{-1}\LN \inc \ha^k \RN_{\ltwods}$, yields the asserted estimate.
\end{proof}
%
%
\subsection{Practical space discretization}
\label{sub:spacedisc}
Having understood what is required from scheme \eqref{firstschemeROS} to achieve stability we will now specify our choices using finite elements. We assume that we have at hand a conforming and shape regular triangulation $\triangulation$ of the polygonal domain $\Omega$, made of open disjoint elements $\element$ such that $\overline\Omega = \bigcup_{\element \in \triangulation} \overline\element$. We will denote by $\mathcal{F}^i$ the collection of internal faces $F$ of $\triangulation$. As Proposition~\ref{disclemma} shows, to gain stability it is convenient to have $\nabla \FEspacePhi \subset \FEspaceM$. Since the space $\FEspacePhi$ is used to approximate the solution of an elliptic problem with Neumann boundary conditions, the simplest choice for $\FEspacePhi$ is a finite element space of continuous functions
\begin{align}
\label{choice1}
\FEspacePhi &= \lb \Phitest \in \mathcal{C}^0\bigl(\overline\Omega\bigr) \ | \ \Phitest|_{T} \in \simplex_{\polydegree}(\element) \, , \forall \element \in \triangulation \rb  \subset \hone \, .
\end{align}
To achieve $\nabla\FEspacePhi \subset \FEspaceM$ we allow $\FEspaceM$ to be a space of discontinuous functions
\begin{align}
\label{choice2}
\FEspaceM = \lb \bv{M} \in \ltwods(\Omega) \ | \ \bv{M}|_{T} \in [\simplex_{\polydegree-1}(\element)]^d \, , \forall \, \element \in \triangulation \rb \, ,
\end{align}
and, consequently, the forms $\trilm(\cdot, \cdot, \cdot)$ and $a_h(\cdot, \cdot)$ must be defined accordingly. Here $\simplex_{\polydegree}$ denotes the space of polynomials of total degree at most $\polydegree$, usually associated to simplicial elements. 

The trilinear form $\trilm$ is defined by
\begin{align}
\label{trilineardef}
\begin{split}
\trilm(\bv{U},\bv{V},\bv{W})
:= \sum_{\element \in \triangulation} \bulkint{\element}{ (\bv{U} \cdot \nabla )\bv{V} \cdot \bv{W}
    + \tfrac{1}{2} \diver{U}\bv{V}\cdot\bv{W} } -  \sum_{F \in \mathcal{F}^i}
\bdryint{F}{ ( \lj\bv{V}\rj \cdot \lbb\bv{W}\rbb)(\bv{U} \cdot \normal_F) } \, ,
\end{split}
\end{align}
the bulk integrals are the classical Temam \cite{Temam} modification of the convective term \eqref{eq:defoftrilb}, whereas the face integrals are consistency terms. This discretization of convection for discontinuous spaces traces back to \cite{DiErn2012,DiPi10,Gir2005,Les1974}. From these references it is known that, provided the first argument ($\bv{U}$ in \eqref{trilineardef}) is $\hdiv$ conforming and has a vanishing normal trace on the boundary, $\trilm(\cdot,\cdot, \cdot)$ is skew symmetric, that is \eqref{eq:bhmskew} holds.

The bilinear form $a_h(\cdot, \cdot)$ (note that $a_h(\cdot, \cdot)$ was not used in Proposition \ref{disclemma} since we considered the case $\magdiff = 0$) is obtained with interior penalty techniques
\begin{align}
\label{bilineardec}
a_h(\bv{M},\Mtest) = \langle \bv{M}, \Mtest \rangle_{\hcurlh}
+ \langle \bv{M}, \Mtest \rangle_{\hdivh} \, ,
\end{align}
where $\langle \cdot, \cdot \rangle_{\hcurlh}$ and $\langle \cdot, \cdot \rangle_{\hdivh}$ are interior penalty discretizations of $(\curl{\cdot},\curl{\cdot})$ and $(\diver{\cdot},\diver{\cdot})$, respectively
\allowdisplaybreaks\begin{align}
\label{semicurlh}
\begin{split}
\langle \bv{M}, \Mtest \rangle_{\hcurlh} &= \sum_{\element \in \triangulation}
\bulkint{\element}{\curl{M} \cdot \curl{}\Mtest } \\
&- \sum_{F \in \mathcal{F}^i} \bdryint{F}{(\lbb\curl{M}\rbb \! \times \! \normal_F) \cdot \lj\Mtest \rj + (\lbb\curl{}\Mtest\rbb \!\times \! \normal_F) \cdot \lj\bv{M} \rj }  \\
&+ \sum_{F \in \mathcal{F}^i} \tfrac{\eta}{h_F} \bdryint{F}{ (\lj \bv{M} \rj \! \times \! \normal_F)  \cdot (\lj \Mtest \rj \! \times \! \normal_F)} \, , 
\end{split} \\
\label{semidivh}
\begin{split}
\langle \bv{M}, \Mtest \rangle_{\hdivh} &= \sum_{\element \in \triangulation}
\bulkint{\element}{\diver{M} \, \diver{}\Mtest } \\
&- \sum_{F \in \mathcal{F}^i} \bdryint{F}{\lbb\diver{M}\rbb (\lj \Mtest \rj \cdot \normal_F)
+ \lbb\diver{}\Mtest \rbb (\lj \bv{M} \rj \cdot \normal_F)} \, \\
&+ \sum_{F \in \mathcal{F}^i} \tfrac{\eta}{h_F} \bdryint{F}{ (\lj \bv{M} \rj \cdot \normal_F ) (\lj \Mtest \rj \cdot \normal_F)} \, .
\end{split}
\end{align}
As usual, the parameter $\eta>0$ must be chosen large enough to yield coercivity \eqref{positivityprop} (\cf~\cite{DiPi10}). In this setting the discrete semi-norm $|\cdot|_{a}$ is defined as 
\begin{align*}
|\bv{M} |_{a}^2 = \sum_{\element \in \triangulation} \int_\element \big( |\curl{M}|^2 + |\diver{M}|^2 \big)
+ \sum_{F \in \mathcal{F}^i} \tfrac{1}{h_F} \int_F  \left( | \lj \bv{M} \rj \cdot \normal_F  |^2 + | \lj \bv{M} \rj \times \normal_F |^2 \right)  \, ,
\end{align*}
The choice of the remaining spaces is straightforward. The only restriction (for the scheme \eqref{firstschemeROS}) is that the pair $\lb\FEspaceU,\FEspaceP\rb$, used to approximate the linear velocity and pressure, must be LBB stable (see \eqref{discreteinfsup}). Therefore, for $\polydegree \geq 2$ we set
{\allowdisplaybreaks\begin{align}
\label{FEspaces}
\begin{aligned}
\FEspaceU &= \lb \bv{U} \in \bm{\mathcal{C}}^0\bigl(\overline\Omega\bigr) \ |
\ \bv{U}|_{T} \in [\simplex_{\polydegree}(\element)]^d \, , \forall \, \element \in \triangulation \rb \cap \hzerod \, , \\
\FEspaceP &= \lb \Ptest \in {\mathcal{C}}^0\bigl(\overline\Omega\bigr) \ |
\ \Ptest|_{T} \in \simplex_{{\polydegree-1}}(\element) \, , \forall \, \element \in \triangulation \rb \, , \\
\FEspaceW &= \lb \bv{W} \in \bm{\mathcal{C}}^0\bigl(\overline\Omega\bigr) \ | \ \bv{W}|_{T} \in [\simplex_{\polydegree}(\element)]^d \, , \forall \, \element \in \triangulation \rb \cap \hzerod \, . 
%
\end{aligned}
\end{align}}
It is well known that the pair $\lb\FEspaceU,\FEspaceP\rb$ in \eqref{FEspaces} (\cf~\cite{Girault,ErnGuermond}) is LBB stable for $\ell\geq 2$ under minor restrictions of the mesh $\triangulation$. Note also in \eqref{FEspaces}, that we are using a continuous finite element space $\FEspaceP$ for the pressure, which is something we might have to change (the velocity space too) if we want to consider convergence of a numerical scheme under minimal regularity assumptions. The use of discontinuous pressures will be considered later in \S\ref{simpferro1} for a slightly different model.  

In \eqref{choice1}, \eqref{choice2} and \eqref{FEspaces} we have used polynomials $\simplex_{\polydegree}$ over simplices. However, the fact that the scheme \eqref{firstschemeROS} is energy-stable is not tightly related to simplices. In \eqref{choice1}, \eqref{choice2} and \eqref{FEspaces}, it is possible to replace $\simplex_{\polydegree}$ by $\quadrilateral_{\polydegree}$ (polynomials of degree at most $\polydegree$ in each variable) and use quadrilateral/hexahedral elements. That will only require us to do some minor modifications of the scheme. To simplify our exposition we will always assume that our elements are simplicial and develop our theory under this assumption. We will provide remarks describing the required modifications, if any, when quadrilaterals are to be used. With this choice of spaces, the scheme presents itself as a generalization of those studied in \cite{Tom13,AJS2013}.
\begin{remark}[redefinition of the pressure]\label{remredefpress} Given $\varpi \in \{0,1\}$, let $(\bv{u},p) \in \hzerod \times \lzerotwo$ solve
\begin{alignat*}{2}
(\nabla\bv{u},\nabla\bv{v}) - (p,\diver{v}) &= (\bv{f},\bv{v}) + \varpi (g,\diver{v})   && \ \ \forall \bv{v} \in \hzerod \, , \\
(q,\diver{u}) &= 0  &&\ \ \forall q \in \lzerotwo \, , 
\end{alignat*}
Since the pressure can be redefined as $\widehat{p} = p + \varpi g$, the velocity $\bv{u}$ is independent of $\varpi$. This has been used to devise energy-stable schemes for phase field models \cite{LiuShen2003} and liquid crystals \cite{LinLiu2007}. In the same spirit we have eliminated the term $- \tfrac{\mu_0}{2} (\diver{V},\bv{M}^k\cdot\bv{H}^k)$ from the Kelvin force (in \eqref{eq:discmom}).
\end{remark}
%
%
\subsection{Existence of solutions for $\magdiff=0$}
\label{localsolv}
The energy estimate of Proposition~\ref{disclemma} (discrete stability) serves as an a priori estimate of solutions of \eqref{firstschemeROS}. This estimate can be used to establish, with the help of Leray-Schauder theorem (\cf~\cite[p. 280]{GT2001} ), existence of solutions. The core of the following result is a local in time energy estimate similar to that of Proposition~\ref{disclemma}. To avoid repetitions we skip some details.

\begin{theorem}[existence for $\magdiff = 0$]\label{localexist} Let $h,\dt > 0$. For every $k \in \{1,\ldots,K\}$, the scheme \eqref{firstschemeROS} has a solution. Moreover, this solution satisfies the estimate of Proposition~\ref{disclemma}.
\end{theorem}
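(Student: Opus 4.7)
My plan is to apply the Leray--Schauder fixed point theorem (\cf~\cite[p.~280]{GT2001}) in the finite dimensional product space $X = \FEspaceU \times \FEspaceP \times \FEspaceW \times \FEspaceM \times \FEspacePhi$ endowed with its natural Hilbert structure. First I would reduce the problem to finding $(\bv{U}^k,\bv{W}^k,\bv{M}^k,\hdpoth^k)$ in $\FEspaceU^{\mathrm{div}} \times \FEspaceW \times \FEspaceM \times \FEspacePhi$, where $\FEspaceU^{\mathrm{div}} = \{\bv{V} \in \FEspaceU : (\diver{V},Q)=0 \, \forall Q \in \FEspaceP\}$; once such a tuple is constructed, the pressure $P^k \in \FEspaceP$ is recovered from the linear momentum equation via the discrete inf-sup condition \eqref{discreteinfsup}. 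The data $(\bv{U}^{k-1},\bv{W}^{k-1},\bv{M}^{k-1})$ are regarded as fixed throughout.

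Next I would introduce a homotopy parameter $\lambda \in [0,1]$ and define the $\lambda$-scheme by multiplying every nonlinear and every coupling term in \eqref{firstschemeROS} by $\lambda$, while leaving the discrete time-derivatives, the linear dissipative bilinear forms, and the relaxation terms unchanged; concretely, the terms $\tril_h(\bv{U}^k,\bv{U}^k,\cdot)$, $\tril_h(\bv{U}^k,\bv{W}^k,\cdot)$, $\trilm(\bv{U}^k,\cdot,\bv{M}^k)$, $\trilm(\cdot,\bv{H}^k,\bv{M}^k)$, $(\bv{M}^k\times\bv{H}^k,\cdot)$, $(\bv{M}^k\times\bv{W}^k,\cdot)$, the source $\tfrac{1}{\chartime}(\permit \bv{H}^k,\cdot)$, and the right-hand side $(\ha^k - \bv{M}^k,\nabla\Phitest)$ in \eqref{DiscPoisson} are all scaled by the same $\lambda$. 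Written as a fixed-point equation $x = T(x,\lambda)$ on $X$, the map $T$ is continuous and, at $\lambda=0$, decouples into linear problems (a discrete Stokes system, a coercive elliptic system for $\bv{W}$, an $L^2$-projection plus relaxation for $\bv{M}$, and a Laplace problem for $\hdpoth$) each of which is uniquely solvable; hence $T(\cdot,0) \equiv x_0$ is well defined and compact (automatic in finite dimensions).

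The crucial step is a uniform a priori estimate: every fixed point $x = T(x,\lambda)$ of the $\lambda$-scheme satisfies a bound in $X$ independent of $\lambda$. For this I would repeat verbatim the argument of Proposition~\ref{disclemma} at the single step $k$: test with $\Utest = 2\dt \bv{U}^k$, $\Wtest = 2\dt \bv{W}^k$, $\Mtest = 2\dt \mu_0 \bv{M}^k$, and use $\nabla\Phitest = \tfrac{2\dt\permit\mu_0}{\chartime}\nabla\hdpoth^k$; then additionally test the magnetization equation with $\Mtest = 2\dt\mu_0\bv{H}^k$ (a valid choice because of the embedding $\nabla\FEspacePhi \subset \FEspaceM$) and the Poisson identities \eqref{phiID00disc}--\eqref{phiID01disc} with $\Phitest = \Phi^k$ and $\Phitest = \inc\Phi^k$. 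Because the $\lambda$-scaling is applied simultaneously to the pairs of terms that cancel in Proposition~\ref{disclemma}, namely $\trilm(\bv{U}^k,\bv{H}^k,\bv{M}^k)$ against $\trilm(\bv{U}^k,\bv{H}^k,\bv{M}^k)$ and $(\bv{M}^k\times\bv{H}^k,\bv{W}^k)$ against $(\bv{M}^k\times\bv{W}^k,\bv{H}^k)$, the factor $\lambda$ drops out of the resulting energy identity. One-step application of \eqref{sumid}, the skew-symmetries \eqref{eq:bhmskewns}--\eqref{eq:bhmskew}, and Young's inequality yield a bound on $\mathscr{E}_{h,\dt}^k + \dt\,\mathscr{D}_{h,\dt}^k$ depending only on $\mathscr{E}_{h,\dt}^{k-1}$ and $\mathscr{F}^k(\ha)$, hence independent of $\lambda$. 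Since all norms on a finite-dimensional space are equivalent, this furnishes the ball of radius $R$ required by Leray--Schauder.

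With the uniform bound established, Leray--Schauder delivers a fixed point at $\lambda=1$, i.e.~a solution $(\bv{U}^k,\bv{W}^k,\bv{M}^k,\hdpoth^k)$ of \eqref{firstschemeROS} in $\FEspaceU^{\mathrm{div}} \times \FEspaceW \times \FEspaceM \times \FEspacePhi$; the pressure $P^k$ is then obtained from \eqref{discreteinfsup}. The estimate of Proposition~\ref{disclemma} is inherited from the a priori bound just derived, summed over $k$. The step I expect to be delicate is arranging the $\lambda$-homotopy so that the cancellations of the Kelvin force against $\trilm$ and of the magnetic torque against the cross-product on $\bv{M}$ survive intact; once the scaling is applied consistently to all coupling terms this works, but mis-scaling any single term would destroy the $\lambda$-independent bound and thus the existence argument.
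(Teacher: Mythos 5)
Your proposal is correct and rests on the same two pillars as the paper's proof: the Leray--Schauder theorem in the finite-dimensional product space, and a uniform a priori bound obtained by replaying the energy argument of Proposition~\ref{disclemma}, whose viability hinges on the observation that the cancellation of the Kelvin force against the magnetization convection term, and of the magnetic torque against the cross-product term, survives the deformation. The difference lies in how the homotopy is arranged. The paper does not scale the nonlinearities: it defines a \emph{linearized} solution operator $\mathcal{L}$ (the transport velocity and the fields entering the products are taken from the input, the remaining unknowns from the output, solved bottom-up starting from the coupled magnetization--potential block in \eqref{eq:iterat0}, whose unique solvability is checked by testing with the solution itself), and then bounds solutions of $\widehat{x}=\lambda\mathcal{L}\widehat{x}$, which effectively places factors $\lambda^{-1}$ on the output variables rather than factors $\lambda$ on the nonlinear terms. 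Your version, scaling every nonlinear and coupling term (including the right-hand side of \eqref{DiscPoisson}) by $\lambda$, works equally well, because the cancelling pairs carry the same factor $\lambda$ and the scaled Poisson identity $\|\nabla\hdpoth^k\|_{\ltwods}^2=\lambda(\ha^k-\bv{M}^k,\nabla\hdpoth^k)$ only improves the sign of the terms it generates. Two points deserve attention if you write this out in full: (i) you must verify that $T(\cdot,\lambda)$ is well defined --- i.e.\ that the system which is linear in the output is uniquely solvable --- for \emph{every} $\lambda\in[0,1]$, not only at $\lambda=0$; this is where the paper's bottom-up solvability argument lives, and with your construction it is the same coercivity computation for all $\lambda$, but it cannot be omitted; and (ii) when you take increments of the $\lambda$-scaled Poisson equation to mimic \eqref{phiID01disc}, the step $k-1$ data satisfy the unscaled equation, so the increment identity does not close exactly; this is harmless for a single-step bound, where one simply estimates the resulting term $(\bv{M}^{k-1},\nabla\hdpoth^k)$ by Cauchy--Schwarz and Young, exactly as the paper does in its boundedness step, but the telescoping phrasing ``summed over $k$'' should be replaced by a direct appeal to Proposition~\ref{disclemma} once the fixed point at $\lambda=1$ is known to solve \eqref{firstschemeROS}.
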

\begin{proof} We define the linear map $\widehat{x} = \mathcal{L} x$, 
\begin{align*}
\lb\bv{U}^k,P^k,\bv{W}^k,\bv{M}^k,\nabla\hdpoth^k \rb \xmapsto{\mathcal{L}} 
\lb \widehat{\bv{U}}^k,\widehat{P}^k,\widehat{\bv{W}}^k,\widehat{\bv{M}}^k,\nabla\widehat{ \hdpoth}^k \rb \, , 
\end{align*}
where the quantities with hats solve:
\begin{subequations}
\label{eq:iterat0}
\begin{align}
\begin{split}\label{iteratU0}
\alp \tfrac{ \widehat{\bv{U}^{k}}-\bv{U}^{k-1}}\dt, \Utest \arp
&+ \tril_h\lp\bv{U}^{k},\widehat{\bv{U}}^{k},\Utest\rp 
+ \nunot \lp \gradv{}\widehat{\bv{U}}^{k},\gradv{}\Utest \rp \\
&- \lp \widehat{P}^{k}, \diver{}\Utest \rp 
= 2 \nu_r \lp \curl{}\widehat{\bv{W}}^{k},\Utest \rp + \mu_0 \trilm\lp \Utest, \bv{H}^{k},\widehat{\bv{M}}^{k}\rp \, , 
\end{split} \\
\label{iteratIncomp0}
\lp \Ptest, \diver{}\widehat{\bv{U}}^k\rp &= 0 \, , \\
\begin{split}\label{iteratW0}
\inertiamom  \alp \tfrac{ \widehat{\bv{W}}^{k} - \bv{W}^{k-1}}\dt,\Wtest \arp
&+ \inertiamom \tril_h\lp\bv{U}^{k},\widehat{\bv{W}}^{k},\Wtest\rp 
+ c_1  \lp \gradv{}\widehat{\bv{W}}^{k}, \nabla \Wtest \rp 
+ c_2 \lp \diver{}\widehat{\bv{W}}^{k},\diver{}\Wtest \rp \\
&+ 4 {\nu}_r  \lp \widehat{\bv{W}}^{k}, \Wtest \rp 
= 2\nu_r \lp \curl{U}^{k}, \Wtest \rp 
+ \mu_0 \lp \widehat{\bv{M}}^{k} \times \bv{H}^{k},\Wtest \rp \, ,
\end{split} \\
\begin{split}\label{iteratM0}
\alp \tfrac{\widehat{\bv{M}}^{k} - \bv{M}^{k-1}}\dt,\Mtest \arp
&- \trilm\lp\bv{U}^{k},\Mtest,\widehat{\bv{M}}^{k}\rp
+ \lp \widehat{\bv{M}}^k \times \bv{W}^k, \Mtest \rp 
+ \tfrac{1}{\chartime} \lp \widehat{\bv{M}}^k, \Mtest \rp \\
&= \tfrac{\permit}{\chartime} \lp \widehat{\bv{H}}^{k}, \Mtest \rp \, , 
\end{split} \\
\label{iteratPhi0}
( \nabla\widehat{\hdpoth}^k,\nabla\Phitest) &= (\ha^k - \widehat{\bv{M}}^k,\nabla\Phitest) \, ,
\end{align}
\end{subequations}
where $\widehat{\bv{H}}^{k} = \nabla\widehat{\hdpoth}^k$. To assert the existence of a solution we show that the map $\mathcal{L}$ satisfies the requirements of the Leray-Schauder theorem \cite[p. 280]{GT2001}:
\begin{itemize}[leftmargin=20pt] 
\setlength{\itemsep}{2pt} 
\setlength{\parskip}{0cm}
\item[\itemizebullet] \textbf{Well posedness.} The operator $\mathcal{L}$ is clearly well 
defined. The information follows a bottom-up path, so we start with the coupled system \eqref{iteratM0}-\eqref{iteratPhi0} on the bottom, which can be conveniently rewritten as follows:
\begin{align*}
\alp \widehat{\bv{M}}^{k},\Mtest \arp 
- \dt \trilm\lp\bv{U}^{k},\Mtest,\widehat{\bv{M}}^{k}\rp 
+ \dt \lp\widehat{\bv{M}}^k\times\bv{W}^k,\Mtest\rp 
+ \tfrac{\dt}{\chartime} \lp\widehat{\bv{M}}^k,\Mtest\rp - \tfrac{\dt\permit}{\chartime} \lp \widehat{\bv{H}}^{k}, \Mtest \rp
&= \alp \bv{M}^{k-1},\Mtest \arp \\
(\widehat{\bv{M}}^k,\nabla\Phitest) 
+ ( \nabla \widehat{\hdpoth}^k,\nabla\Phitest)
&= (\ha^k,\nabla\Phitest) \, .
\end{align*}
Set $\Mtest = \widehat{\bv{M}}^{k}$ and $\Phitest = \tfrac{\dt \permit}{\chartime} \widehat{\hdpoth}^k$, to show that this system is positive definite. This yields the functions $\widehat{\bv{M}}^k$ and $\widehat{\hdpoth}^k$, which can be used as data in \eqref{iteratW0} to obtain $\widehat{\bv{W}}^{k}$. Inserting $\widehat{\bv{W}}^{k}$ and $\widehat{\bv{M}}^{k}$ into \eqref{iteratU0}--\eqref{iteratIncomp0} gives rise to the pair $(\widehat{\bv{U}^{k}},\widehat{P^{k}})$.

\item[\itemizebullet] \textbf{Boundedness.} We must verify that solutions $\widehat{x} = \lb \widehat{\bv{U}}^k,\widehat{P}^k,\widehat{\bv{W}}^k,\widehat{\bv{M}}^k,\nabla\widehat{ \hdpoth}^k \rb$ of $\tfrac{1}{\lambda} \widehat{x} = \mathcal{L}\widehat{x}$ with $\lambda \in (0,1]$ can be bounded in terms of the local data $\lb \bv{U}^{k-1},\bv{W}^{k-1},\bv{M}^{k-1},\nabla\hdpoth^{k-1}, \ha^k \rb$ uniformly with respect to $\lambda$. In other words, we want to analyze the local boundedness of
{\allowdisplaybreaks\begin{subequations}
\label{eq:iterat}
\begin{align}
\begin{split}\label{iteratU}
\alp \tfrac{ \lambda^{-1} \, \widehat{\bv{U}^{k}}-\bv{U}^{k-1}}\dt, \Utest \arp
&+ \tril_h\lp\widehat{\bv{U}}^{k},\lambda^{-1} \, \widehat{\bv{U}}^{k},\Utest\rp  \\
&+ \nunot \lp \lambda^{-1} \, \gradv{}\widehat{\bv{U}}^{k},\gradv{}\Utest \rp 
- \lp \lambda^{-1} \, \widehat{P}^{k}, \diver{}\Utest \rp \\
&= 2 \nu_r \lp \lambda^{-1} \, \curl{}\widehat{\bv{W}}^{k},\Utest \rp + \mu_0 \trilm\lp \Utest, \widehat{\bv{H}}^{k}, \lambda^{-1} \, \widehat{\bv{M}}^{k} \rp \, ,
\end{split} \\
\label{iteratIncomp}
\lp \Ptest, \diver{}\widehat{\bv{U}}^k\rp &= 0 \, , \\
\begin{split}\label{iteratW}
\inertiamom  \alp \tfrac{\lambda^{-1} \, \widehat{\bv{W}}^{k} - \bv{W}^{k-1}}\dt,\Wtest \arp
&+ \inertiamom \tril_h\lp\widehat{\bv{U}}^{k},\lambda^{-1} \, \widehat{\bv{W}}^{k},\Wtest\rp
+ c_1 \lp \lambda^{-1} \, \gradv{}\widehat{\bv{W}}^{k}, \nabla \Wtest \rp \\
&+ c_2 \lp \lambda^{-1} \, \diver{}\widehat{\bv{W}}^{k},\diver{}\Wtest \rp
+ 4 {\nu}_r  \lp \lambda^{-1} \, \widehat{ \bv{W}}^{k}, \Wtest \rp \\
&= 2\nu_r \lp \curl{}\widehat{\bv{U}}^{k}, \Wtest \rp
+ \mu_0 \lp  \lambda^{-1} \, \widehat{\bv{M}}^{k} \times \widehat{\bv{H}}^{k},\Wtest \rp \, ,
\end{split} \\
\begin{split}\label{iteratM}
\alp \tfrac{\lambda^{-1} \, \widehat{\bv{M}}^{k} - \bv{M}^{k-1}}\dt,\Mtest \arp
&- \trilm\lp\widehat{\bv{U}}^{k},\Mtest, \lambda^{-1} \, \widehat{\bv{M}}^{k} \rp
+ \lp \lambda^{-1} \, \widehat{\bv{M}}^k \times \widehat{\bv{W}}^k, \Mtest \rp  \\
&+ \tfrac{1}{\chartime} \lp \lambda^{-1} \, \widehat{\bv{M}}^k, \Mtest \rp
= \tfrac{\permit}{\chartime} \lp \lambda^{-1} \, \widehat{\bv{H}}^{k}, \Mtest \rp
\end{split} \\
\label{iteratPhi}
 (\lambda^{-1} \, \nabla\widehat{\hdpoth}^k,\nabla\Phitest) &= (\ha^k - \lambda^{-1} \, \widehat{\bv{M}}^k,\nabla\Phitest) \, , 
\end{align}
\end{subequations}} \\
with $\lambda \in (0,1] $. Set $\Utest = 2 \dt \lambda \widehat{\bv{U}}^k$, $\Wtest = 2 \dt \lambda \widehat{\bv{W}}^k$, $\Mtest = 2 \dt \mu_0 \lambda \widehat{\bv{M}}^k$, $\Phitest =  \tfrac{2 \dt \permit \mu_0 \lambda}{\chartime}  \widehat{\hdpoth}^k$ in \eqref{iteratU}-\eqref{iteratPhi}, and use identity \eqref{sumid}. As we did to obtain \eqref{identity}, set $\Mtest = 2 \mu_0 \dt \lambda \nabla\widehat{\hdpoth}^k$ in \eqref{iteratM} (to eliminate the trilinear terms). Adding the ensuing equations, and eliminating superfluous positive terms, we obtain:
\begin{align}
\label{iterativeenergy2}
\begin{aligned}
\|\widehat{\bv{U}}^k\|_{\ltwods}^2 &- \|\lambda\bv{U}^{k-1}\|_{\ltwods}^2
+ \inertiamom \|\widehat{\bv{W}}^k\|_{\ltwods}^2 - \inertiamom \|\lambda \bv{W}^{k-1}\|_{\ltwods}^2 
+ \mu_0 \|\widehat{\bv{M}}^k\|_{\ltwods}^2\\
&- \mu_0 \|\lambda \bv{M}^{k-1}\|_{\ltwods}^2  
+ 2 \, (\nu + \nu_r ) \, \dt \, \|\gradv{}\widehat{\bv{U}}^k\|_{\ltwods}^2
+ 2 c_1 \dt \|\gradv{}\widehat{\bv{W}}^k\|_{\ltwods}^2 \\
&+ 2 c_2 \dt  \|\diver{}\widehat{\bv{W}}^k\|_{\ltwods}^2
+ 8 \dt \nu_r \|\widehat{\bv{W}}^k\|_{\ltwods}^2
+ \tfrac{2 \mu_0 \dt}{\chartime} \|\widehat{\bv{M}}^k\|_{\ltwods}^2 \\
&+\tfrac{4\mu_0 \permit \dt}{\chartime} \| \nabla\widehat{\hdpoth}^k \|_{\ltwods}^2 \leq 4 \nu_r \dt (1+ \lambda) (\curl{}\widehat{\bv{U}}^k,\widehat{\bv{W}}^k) \\
&+ 2 \mu_0 (\widehat{\bv{M}}^k - \lambda \bv{M}^{k-1}, \nabla\widehat{\hdpoth}^k) 
+ \tfrac{2 \mu_0 \dt}{\chartime} (\widehat{\bv{M}}^k,\nabla\widehat{\hdpoth}^k) \\
&+ \tfrac{2 \mu_0 \permit \dt \lambda}{\chartime} (\ha^k,\nabla\widehat{\hdpoth}^k) \, .
\end{aligned}
\end{align}
Set $\Phitest = \lambda \widehat{\hdpoth}^k$ in \eqref{iteratPhi} to obtain
$\|\nabla\widehat{\hdpoth}^k \|_{\ltwods}^2 =  (\lambda \ha^k - \widehat{\bv{M}}^k, \nabla\widehat{\hdpoth}^k).$ Consequently, \eqref{iterativeenergy2} can be rewritten as
\begin{align}
\begin{aligned}
\|\widehat{\bv{U}}^k\|_{\ltwods}^2 &- \|\lambda\bv{U}^{k-1}\|_{\ltwods}^2
+ \inertiamom \|\widehat{\bv{W}}^k\|_{\ltwods}^2 - \inertiamom \|\lambda \bv{W}^{k-1}\|_{\ltwods}^2 
+ \mu_0 \|\widehat{\bv{M}}^k\|_{\ltwods}^2 \\
&- \mu_0 \|\lambda \bv{M}^{k-1}\|_{\ltwods}^2  
+ 2 \, (\nu + \nu_r ) \, \dt \, \|\gradv{}\widehat{\bv{U}}^k\|_{\ltwods}^2 
+ 2 c_1 \dt \|\gradv{}\widehat{\bv{W}}^k\|_{\ltwods}^2 \\
&+ 2 c_2 \dt  \|\diver{}\widehat{\bv{W}}^k\|_{\ltwods}^2 
+ 8 \dt \nu_r \|\widehat{\bv{W}}^k\|_{\ltwods}^2
+ \tfrac{2 \mu_0 \dt}{\chartime} \|\widehat{\bv{M}}^k\|_{\ltwods}^2 
 \\ 
&+ 2 \mu_0 \alp \tfrac{2 \permit \dt}{\chartime} + \tfrac{\dt}{\chartime} + 1 \arp \|\nabla\widehat{\hdpoth}^k\|_{\ltwods}^2 
\leq 4 \nu_r \dt (1+ \lambda) (\curl{}\widehat{\bv{U}}^k,\widehat{\bv{W}}^k) \\
&- 2 \mu_0 \lambda( \bv{M}^{k-1}, \nabla\widehat{\hdpoth}^k)
+ 2 \mu_0 \lambda \alp \tfrac{\permit \dt}{\chartime} + \tfrac{\dt}{\chartime} + 1 \arp 
(\ha^k,\nabla\widehat{\hdpoth}^k ) \, . 
\end{aligned}
\end{align}
To conclude it suffices to suitably bound the right-hand side. This can be easily attained by recalling that $\lambda \leq 1$ and using \eqref{eq:Honezdcurldiv}.

\item[\itemizebullet] \textbf{Compactness.} Compactness of the linear operator $\mathcal{L}$ is immediate, since we are working in finite dimensions.
\end{itemize}
These three properties enable us to a apply the Leray-Schauder theorem and conclude the proof.
\end{proof}
%
%
\subsection{Lack of stability for $\magdiff>0$}
\label{failuresigmapos}
Since $\bv{H}^k = \nabla\hdpoth^k \in \hcurl$ we have that
\begin{align}
\label{hcurlprop}
\begin{gathered}
\curl{H}^k \Big|_{\element} = 0 \ \  \forall \element \in \triangulation  \ ,  \ \
\lbb\curl{}\bv{H}^k\rbb \!\! \Big|_{F} = 0 \ \  \forall F \in \mathcal{F}^i  \ ,   \ \ 
\lj \bv{H}^k \rj \times \normal_F  \Big|_{F} = 0 \ \ \forall F \in \mathcal{F}^i.
\end{gathered}
\end{align}
From these properties and definition \eqref{semicurlh} we deduce that $\langle \bv{M}^k, \nabla\hdpoth \rangle_{\hcurlh} = 0$.

If $\magdiff>0$ this result can be used to attempt to obtain an energy estimate for the scheme \eqref{firstschemeROS}. Set $\Utest = 2 \permit \dt \bv{U}^k$, $\Wtest = 2 \dt \permit \bv{W}^k$, $\Mtest = 2 \dt \mu_0 \bv{M}^k$, $\Phitest =  \tfrac{2 \dt \permit \mu_0}{\chartime} \hdpoth^k$ and $\Mtest =  \tfrac{2 \dt \permit \mu_0}{\chartime} \nabla\hdpoth^k$. Following Proposition~\ref{disclemma} we get
{\allowdisplaybreaks
\begin{align}
\label{lastfailedestimate}
\begin{split}
\inc \big(\permit \|\bv{U}^k\|_{\ltwods}^2 
&+ \permit \inertiamom \|\bv{W}^k\|_{\ltwods}^2 
+ \mu_0 \|\bv{M}^k\|_{\ltwods}^2
+ \permit \mu_0 \|\nabla\hdpoth^k\|_{\ltwods}^2
\big) \\
&+ \permit \|\inc\bv{U}^k\|_{\ltwods}^2 
+ \permit \inertiamom \|\inc\bv{W}^k\|_{\ltwods}^2
+ \mu_0 \|\inc\bv{M}^k\|_{\ltwods}^2 
+ \permit \mu_0 \|\inc\nabla\hdpoth^k\|_{\ltwods}^2 \\
&+ 2 \permit \dt \Big(\nu \|\gradv{U}^k\|_{\ltwods}^2 
+ c_1 \|\gradv{W}^k\|_{\ltwods}^2 
+ \nu_r \|\diver{U}^k\|_{\ltwods}^2 \\
&+ c_2 \|\diver{W}^k\|_{\ltwods}^2
+ \nu_r \|\curl{U}^k - 2 \bv{W}^k\|_{\ltwods}^2 
+ \tfrac{\mu_0}{\chartime\permit} \|\bv{M}^k\|_{\ltwods}^2 \\
&+ \tfrac{\mu_0}{\chartime} (2 + \permit) \| \nabla\hdpoth^k \|_{\ltwods}^2 \Big)
+ 2 \magdiff \dt \Big( \mu_0 \langle \bv{M}^k,\bv{M}^k \rangle_{\hdivh} \\
&+ \mu_0 \langle \bv{M}^k,\bv{M}^k \rangle_{\hcurlh}
+ \gamma \|(\bv{M}^k - \permit \nabla\hdpoth^k)\times \normal\|_{\bv{L}^2(\bdry)}^2 \\
&+ \gamma \|(\bv{M}^k - \permit \nabla\hdpoth^k)\cdot \normal\|_{L^2(\bdry)}^2 \Big) 
= 2\mu_0 \permit \alp \inc\ha^k,\nabla\hdpoth^k \arp \\
&+ \tfrac{2\permit\mu_0 \dt}{\chartime} \lp \ha^k,\nabla\hdpoth^k \rp
+ 2 \mu_0  \magdiff \dt \, \langle \bv{M}^k, \nabla\hdpoth^k \rangle_{\hdivh}.
\end{split}
\end{align}}
Notice that we have gained control on the needed boundary terms. However, there is no way to control the last term $2 \mu_0 \dt \magdiff \, \langle \bv{M}^k, \nabla\hdpoth^k \rangle_{\hdivh}$ unless one can reproduce the identity $(\diver{m},\diver{}\heff ) = - \|\diver{}\heff\|_{\ltwods}^2$ (used in \S\ref{energyest} for the identity \eqref{eq:vectlapint}) in a discrete setting (or at least, prove that $\langle \bv{M}^k, \nabla\hdpoth^k \rangle_{\hdivh} \leq 0$). 

\section{Simplified ferrohydrodynamics and convergent scheme}
\label{simpferro1}
We now simplify the ferrofluid model, discretize it, and study stability and convergence of the discrete scheme.

\subsection{Simplification}

We can simplify the model defined by \eqref{eq:ferroeq}, \eqref{eq:icFERRO}, \eqref{eq:bcMNSE}, \eqref{eq:phiNeuIII} and $\magdiff = 0$ by eliminating the magnetostatics problem \eqref{eq:phiNeuIII}, and setting the effective magnetizing field to be $\heff := \ha$. The purpose of this section is to explain, at least with a heuristic argument, under which circumstances this is a reasonable physical approximation. 

As we know from \S\ref{hdbcs}, $\hdpot$ is the sum of two potentials $\hdpot = \hdpoto + \hapot$ (see Remark \ref{rem00}), so that $\heff = \hd + \ha = \nabla\hdpoto + \nabla\hapot$. In this context one may ask under which circumstances we can neglect the contribution of the demagnetizing field $\hd$ and assert that $\heff \approx \ha$ is a good approximation. In other words, we want to find an estimate for the difference $\nabla\hdpoto = \nabla\hdpot-\nabla\hapot$. For this purpose we set $\phitest = \hdpoto$ in \eqref{varforphi01}, and use Cauchy-Schwarz inequality to obtain 
\begin{align}\label{varforphi04}
\|\nabla\hdpoto \|_{\ltwods} \leq \|\bv{m}\|_{\ltwods} \, .
\end{align}
In conclusion, $\heff \approx \ha$ whenever the magnetization $\bv{m}$ is small. On the other hand, as explained in \S\ref{sec:Rosensweig}, the evolution of the magnetization is such that $\bv{m} \approx \permit \heff$ when close to equilibrium. Thus, if $\permit < < 1$ the magnetization $\bv{m}$ will be small, so that we can neglect the contribution of the demagnetizing field to the total magnetic field as suggested by \eqref{varforphi04}. Water based ferrofluids subject to slowly varying magnetic fields could be modeled under these assumptions, since they usually exhibit a small magnetic susceptibility $\suscep$ \cite{Pak2006,FerrotecWebpage}. It is worth mentioning that the simplification $\heff := \ha$ is not particularly new: it has been used for analytic computations of the Rosensweig model and still retains a significant amount of valid quantitative information as shown for instance in \cite{Rinal02,Zahn95,Sunil2007}; it has also been suggested in the analysis of stationary 
configurations of free surfaces of ferrofluids \cite{Tob2006}. 
%
\subsection{Ultra weak formulation of simplified ferrohydrodynamics}
\label{simpferro}
We will consider the following weak formulation for the model defined by equations \eqref{eq:ferroeq}-\eqref{eq:bcMNSE} with $\magdiff = 0$: Find $(\bv{u},\bv{w},\bv{m})\in L^2(0,\tf,\Vspace) \times L^2(0,\tf,\hzerod) \times L^2(0,\tf,\ltwod)$ that satisfy
\begin{subequations}
\label{weakformulation}
\begin{align}
\begin{split}\label{SimpNS} %
\int_{0}^{\tf} - ( \bv{u},\utest_t ) + \tril(\bv{u},\bv{u},\utest) + \nunot ( \nabla\bv{u},\nabla\utest) &= 
( \bv{u}(0),\utest(0) )
+ \int_{0}^{\tf} \mu_0 \tril( \bv{m},\heff, \utest)  \, , 
\end{split} \\
\begin{split}\label{SimpSpin}
\int_{0}^{\tf} - \inertiamom ( \bv{w},\wtest_t ) + \tril(\bv{w},\bv{w},\wtest) + c_1 ( \nabla\bv{w},\nabla\utest) & \\
+ c_2 (\diver{w},\diver{}\wtest) + 4 \nu_r (\bv{w},\wtest) &= \inertiamom ( \bv{w}(0),\wtest(0) ) 
+ \int_{0}^{\tf} 2 \nu_r (\curl{u}, \wtest) + \mu_0 (\bv{m}\times\heff) \, , 
\end{split} \\
\label{SimpMag} 
- \int_{0}^{\tf} (\bv{m},\mtest_t) + \tril(\bv{u},\mtest,\bv{m})
- \tfrac{1}{\chartime} (\bv{m},\mtest) &=
 (\bv{m}(0),\mtest(0)) 
+ \tfrac{\permit}{\chartime} \int_{0}^{\tf} ( \heff,\mtest) \, , 
\end{align}
\end{subequations}
for all $\utest \in \lb \utest\in \boldsymbol{\mathcal{C}}_0^{\infty}([0,\tf) \times \Omega) \, | \, \diver{}\utest = 0 \text{ in } \Omega \, \rb$,
$\wtest,\mtest \in \boldsymbol{\mathcal{C}}_0^{\infty}([0,\tf) \times \Omega)$, where now the magnetic field $\heff$ is not determined by the Poisson problem \eqref{eq:phiNeuIII}, but rather $\heff := \ha$ is a given harmonic smooth vector field. 
%
%
\subsection{Additional definitions}
Some additional definitions:
\begin{itemize}
\item[\itemizebullet] We will denote $\FEspaceUdivfree$ the space of discretely divergence-free functions:
\begin{align*}
\FEspaceUdivfree = \lb \Utest \in \FEspaceU \ |
\ (\Ptest,\diver{}\Utest) = 0 \, \, \forall \, \Ptest \in \FEspaceP \rb \, ,
\end{align*} 
where the spaces $\FEspaceU$ and $\FEspaceP$ are the velocity and pressure space respectively. The choice of spaces $\FEspaceU$ and $\FEspaceP$ will be made precise in \S\ref{choicespacessec}.
\item[\itemizebullet]  Let $\Pi_{\FEspaceUdivfree}:\ltwod \longrightarrow \FEspaceUdivfree$ denote the $\ltwo$ projection onto the space $\FEspaceUdivfree$:
\begin{align}\label{ltwodef}
(\Pi_{\FEspaceUdivfree}\bv{v},\Utest) = (\bv{v},\Utest) \ \ \forall \, \Utest \in \FEspaceUdivfree \, .
\end{align}
\item[\itemizebullet] Similarly, we will denote $\Pi_{\FEspaceW}:\ltwo \longrightarrow \FEspaceW$ the $\ltwo$ projection onto the space $\FEspaceW$. 
\item[\itemizebullet] Define the Stokes projection of $(\bv{w},r)$ as the 
pair $(\SPvel\bv{w},\SPpress r) \in \FEspaceU \times \FEspaceP $ that solves
\begin{align}
\label{stokesproj}
\left\{
\begin{aligned}
( \gradv{}\SPvel\bv{w}, \nabla\Utest ) - ( \SPpress r, \diver{}\Utest ) &= 
( \gradv{w} , \nabla\Ptest ) - ( r, \diver{}\Utest ) &&\forall \Utest \in \FEspaceU \\
( \Ptest, \diver{}\SPvel\bv{w} ) &= ( \Ptest, \diver{}\bv{W} ) &&\forall \Ptest \in \FEspaceP  \, ,
\end{aligned}
\right.
\end{align}
which has the following well known approximation properties (see for instance \cite{Girault,HeyRann})
\begin{align}\label{approximab}
\|\bv{w} -\SPvel\bv{w} \|_{\ltwods}
+ h \|\bv{w} -\SPvel\bv{w} \|_{\honeds}
+ h \|r - \SPpress r \|_{\ltwos} &
\leq c \, h^{\polydegree+1} \,
\big(\|\bv{w}\|_{\bv{H}^{\polydegree+1}}
+ \|r\|_{H^{\polydegree}} \big) \, , 
\end{align}
for all $(\bv{w},r) \in \bv{H}^{\polydegree+1}(\Omega)\cap \hzerod\times H^{\polydegree}(\Omega)$, with $c$ independent of $h$, $\bv{w}$ and $r$.
\end{itemize}
%
\subsection{Scheme}
\label{secConvSchem}
To discretize the system \eqref{weakformulation}, and to avoid technicalities, we will assume that the initial data is smooth and consider an initialization as in \eqref{initROSens}.

For every $k\in \{1,\ldots,K\}$ we compute $\{\bv{U}^k,P^k,\bv{W}^k,\bv{M}^k\} \in \FEspaceU \times \FEspaceP \times \FEspaceW \times \FEspaceM$ that solves 
{\allowdisplaybreaks
\begin{subequations}
\label{converscheme}
\begin{align}
\label{Uconver}
\begin{split}
\alp \tfrac{ \inc\bv{U}^{k}}\dt, \Utest \arp
&+ \tril_h\lp\bv{U}^{k},\bv{U}^{k},\Utest\rp + \nunot \lp \gradv{U}^{k},\gradv{}\Utest \rp
- \lp P^{k}, \diver{}\Utest \rp \\
&= 2 \nu_r \lp \curl{W}^{k},\Utest \rp + \mu_0 \trilm\lp  \Utest, \bv{H}^{k}, \bv{M}^{k}\rp \, , 
\end{split} \\
\label{incompdisc}
\lp \Ptest, \diver{U}^k\rp &= 0 \, , \\
\label{Wconver}
\begin{split}
\inertiamom  \alp \tfrac{\inc\bv{W}^{k}}\dt,\Wtest \arp
&+ \inertiamom \tril_h\lp\bv{U}^{k},\bv{W}^{k},\Wtest\rp 
+ c_1 \lp \gradv{W}^{k}, \nabla \Wtest \rp 
+ c_2 \lp \diver{W}^{k},\diver{}\Wtest \rp \\
&+ 4 {\nu}_r \lp \bv{W}^{k}, \Wtest \rp =
2\nu_r \lp \curl{U}^{k}, \Wtest \rp
+ \mu_0 \lp \bv{M}^{k} \times \bv{H}^{k},\Wtest \rp \, ,
\end{split} \\
\label{Mconver}
\begin{split}
\alp \tfrac{\inc\bv{M}^{k}}\dt,\Mtest \arp
&- \trilm\lp\bv{U}^{k},\Mtest,\bv{M}^{k}\rp
+ \lp\bv{M}^k\times\bv{W}^k,\Mtest\rp 
+ \tfrac{1}{\chartime} \lp \bv{M}^k ,\Mtest\rp \\
&= \tfrac{\permit}{\chartime} \lp  \bv{H}^k,\Mtest\rp \, ,
\end{split}
\end{align}
\end{subequations}}
for all $\lb \Utest,\Ptest,\Wtest,\Mtest\rb \in \FEspaceU \times \FEspaceP \times \FEspaceW \times \FEspaceM $, where the magnetic field $\bv{H}^k$ now is given by 
\begin{align}\label{newHdef}
\bv{H}^k := \text{I}_{\FEspaceM}\ha^k \, ,
\end{align}
with $\text{I}_{\FEspaceM}$ defined in \eqref{interpolants}. The choice of spaces $\FEspaceU$, $\FEspaceP$, $\FEspaceW$ and $\FEspaceM$ does not need to be made precise now; we will provide specific constructions in \S\ref{choicespacessec}. Right now we only need to say that the following assumptions will be required:
\begin{enumerate}
\item[(A1)] The mesh $\triangulation$ is quasi-uniform.
\item[(A2)] The projector $\Pi_{\FEspaceW}$ is $\hzerod$-stable, namely
\begin{align}\label{honestabW}
\|\nabla\Pi_{\FEspaceW}\wtest\|_{\ltwods} \leq c \, \|\wtest\|_{\honeds} \ \
\forall \, \wtest \in \hzerod \, ,
\end{align}
with $c$ independent of $h$ and $\wtest$. In the context of quasi-uniform meshes the reader is referred to classical references like \cite{Ciar78,Girault}, and for non quasi-uniform meshes and different norms to \cite{Stein2002,Crou1987,MR3150226}. Quasiuniformity is a sufficient condition on $\triangulation$ for \eqref{honestabW} to hold true. More general sufficient conditions can be found in the aforementioned references. 
\item[(A3)] The projector $\Pi_{\FEspaceUdivfree}$, defined in \eqref{ltwodef}, is $\hzerod$-stable, namely
\begin{align}\label{Vhonestab}
\|\nabla\Pi_{\FEspaceUdivfree}\utest\|_{\ltwods} \leq c \, \|\utest\|_{\honeds} \ \ 
\forall \, \utest \in \Vspace \, ,
\end{align}
with $c$ independent of $h$ and $\utest$. For the proof of this stability result the reader is referred to \cite{Girault,HeyRann}. This property can be easily established provided that the pair $\lb \FEspaceU,\FEspaceP\rb$ admits a Fortin projector with optimal approximation properties in $\ltwod$. Such a Fortin operator exists for every LBB stable pair, provided that $\Omega$ is convex or of class $\mathcal{C}^{1,1}$ (see for instance \cite{salga2013} and references therein).
\item[(A4)] For all $\Mtest \in \FEspaceM$, we want each space component $\Mtest^i$ ($i:1, ... , d$) to belong to the same finite element space as the pressure, i.e. we will require $\FEspaceM = [\FEspaceP]^d$.
\item[(A5)] The pressure space $\FEspaceP$ should be discontinuous and it should contain a continuous subspace of degree 1 or higher. 
\end{enumerate}
Note that essentially, (A1) is a requirement for (A2) and (A3) to hold true. On the other hand, (A2) and (A3) (i.e. \eqref{honestabW} and \eqref{Vhonestab} respectively) will be used to obtain estimates of the time derivatives in dual norms (see Lemma \ref{derivestlemma}). 

Assumption (A5), more precisely the use of discontinuous pressures, will allow us to localize the incompressibility constraint \eqref{incompdisc} from the Stokes problem to each element, that is
\begin{align}\label{localizorth}
(\Ptest,\diver{}\bv{U}^k)_{\element} = 0 \ \ \forall \Ptest \in \FEspaceP, \, \forall \element \in \triangulation \, .
\end{align}
The constraint $\FEspaceM = [\FEspaceP]^d$ from (A4) together with \eqref{localizorth} means that:
\begin{align}\label{localizorth2}
\lp \Mtest^i, \diver{U}^k\rp_{\element} = 0  \ \ \forall \Mtest \in \FEspaceM \, , 
\forall \, i:1, .... , d \, , \ \forall \element \in \triangulation \, . 
\end{align}
Assumptions (A4)-(A5) imply that $\FEspaceM \cap \boldsymbol{\mathcal{C}}^0(\Omega) \neq \varnothing$, and that the construction of an interpolation operator $\text{I}_{\FEspaceM}$ satisfying \eqref{interpolants} and \eqref{optestim} is always possible. Note in \eqref{converscheme}, that we are using the definition \eqref{trilineardef} for the trilinear form $\trilm(\cdot,\cdot,\cdot)$, however, not all the terms are used. In particular, all the jump terms in the Kelvin force $\mu_0 \trilm\lp \Utest, \bv{H}^{k}, \bv{M}^{k}\rp$ disappear, since $\lj \bv{H}^{k} \rj\!\Big|_{F} = 0$ for all $F \in \mathcal{F}^i$, which is a consequence of definition \eqref{newHdef} and (A4)-(A5). This is a very convenient feature which will greatly simplify the a priori estimates and consistency analysis. 

The main difference between schemes \eqref{firstschemeROS} and \eqref{converscheme}, apart from the fact that the Poisson problem for $\hdpoth^k$ was eliminated, are the new requirements on the spaces $\FEspaceP$ and $\FEspaceM$. We now present the stability of scheme \eqref{converscheme}.

\begin{proposition}[existence and stability]\label{discenerglemma2} For every $k=1,\ldots,K$ there is $\lb\bv{U}^k,P^k,\bv{W}^k,\bv{M}^k\rb\in \FEspaceU \times \FEspaceP \times \FEspaceW \times \FEspaceM$ that solves \eqref{converscheme}, with $\bv{H}^k$ defined in \eqref{newHdef}. Moreover this solution satisfies the following stability estimate 
\begin{align}
\label{secstepdisc2}
\tfrac{1}{2}\mathscr{E}_{h,\dt}^K &+ \dt^{-1} \| \mathscr{I}_{h,\dt}^\dt \|_{\ell^1} + \| \mathscr{D}_{h,\dt}^\dt \|_{\ell^1} \leq \mathscr{F}_{h,\dt}^\dt + 2 \mathscr{E}_{h,\dt}^0 + \mu_0 \|\bv{H}^{0}\|_{\ltwods}^2 
+ 2 \mu_0 \|\bv{H}^{K}\|_{\ltwods}^2 \leq c \, , 
\end{align}
where
\begin{align*}
\begin{gathered}
\mathscr{E}_{h,\dt}^k := \mathscr{E}_{h,\dt}^k(\bv{U}^\dt,\bv{W}^\dt,\bv{M}^\dt,0) \, , \
\mathscr{I}_{h,\dt}^k := \mathscr{I}_{h,\dt}^k(\bv{U}^\dt,\bv{W}^\dt,\bv{M}^\dt,0) \, , \ 
\mathscr{D}_{h,\dt}^k := \mathscr{D}_{h,\dt}^k(\bv{U}^\dt,\bv{W}^\dt,\bv{M}^\dt,0) \, , \
\end{gathered}
\end{align*}
were defined in \eqref{eq:defofmatscrs}, and $\mathscr{F}_{h,\dt}^k$ is defined as
\begin{align*}
\mathscr{F}_{h,\dt}^k := \mathscr{F}^k(\ha) =
3 \mu_0 \chartime \sum_{k=1}^{K-1} \dt \LN\tfrac{\inc\bv{H}^{k+1}}{\dt} \RN_{\ltwods}^2 
+ \sum_{k=1}^{K} \tfrac{\mu_0 \dt}{\chartime} (1 + 3 \permit^2) \|\bv{H}^{k}\|_{\ltwods}^2 \, .
\end{align*}
The constant $c < \infty$ only depends on $\ha$, $\partial_t\ha$, and the initial data $\bv{u}_0$, $\bv{w}_0$, $\bv{m}_0$.
\end{proposition}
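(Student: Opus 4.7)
The plan is to proceed in two stages: first establish existence of a discrete solution by a Leray--Schauder fixed point argument paralleling Theorem~\ref{localexist}, and then derive the stability estimate \eqref{secstepdisc2} by mimicking the strategy of Proposition~\ref{disclemma}, with the crucial difference that $\bv{H}^k = \text{I}_{\FEspaceM}\ha^k$ is now given data rather than the gradient of an unknown scalar potential. For existence I would set up the analogue of the iteration map \eqref{eq:iterat0} with the Poisson equation suppressed: well-posedness follows from a bottom-up solve (first $\widehat{\bv{M}}^k$ from \eqref{Mconver}, then $\widehat{\bv{W}}^k$ from \eqref{Wconver}, then $(\widehat{\bv{U}}^k,\widehat P^k)$ from the Stokes pair via \eqref{discreteinfsup}), compactness is automatic in finite dimension, and the Leray--Schauder boundedness requirement on $\tfrac{1}{\lambda}\widehat x = \mathcal{L}\widehat x$ reduces to the a priori estimate obtained below.

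For the stability estimate I would test \eqref{Uconver}--\eqref{Mconver} with the energy multipliers $\Utest = 2\dt\bv{U}^k$, $\Wtest = 2\dt\bv{W}^k$, $\Mtest = 2\mu_0\dt\bv{M}^k$ and add. The skew-symmetries \eqref{eq:bhmskewns}, \eqref{eq:bhmskew}, the identity $(\bv{M}^k\times\bv{W}^k,\bv{M}^k)=0$, the constraint \eqref{incompdisc}, the decomposition \eqref{eq:Honezdcurldiv} and the telescoping \eqref{sumid} collapse the left-hand side to $2\inc\mathscr{E}_{h,\dt}^k + 2\mathscr{I}_{h,\dt}^k + 2\dt\,\mathscr{D}_{h,\dt}^k$, leaving on the right the Kelvin/torque residuals $2\mu_0\dt\,\trilm(\bv{U}^k,\bv{H}^k,\bv{M}^k) + 2\mu_0\dt\,(\bv{M}^k\times\bv{H}^k,\bv{W}^k)$ together with $\tfrac{2\mu_0\permit\dt}{\chartime}(\bv{H}^k,\bv{M}^k)$.

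To cancel these residuals I would exploit assumptions (A4)--(A5), which guarantee $\bv{H}^k\in\FEspaceM$, and therefore allow the choice $\Mtest = 2\mu_0\dt\bv{H}^k$ in \eqref{Mconver}. Using the cyclic identity $(\bv{M}\times\bv{W},\bv{H}) = -(\bv{M}\times\bv{H},\bv{W})$, the two trilinear residuals can then be rewritten as $2\mu_0(\inc\bv{M}^k,\bv{H}^k) + \tfrac{2\mu_0\dt}{\chartime}(\bv{M}^k,\bv{H}^k) - \tfrac{2\mu_0\permit\dt}{\chartime}\|\bv{H}^k\|_{\ltwods}^2$, and substituting yields the clean per-step relation
\begin{align*}
2\inc\mathscr{E}_{h,\dt}^k + 2\mathscr{I}_{h,\dt}^k + 2\dt\,\mathscr{D}_{h,\dt}^k + \tfrac{2\mu_0\permit\dt}{\chartime}\|\bv{H}^k\|_{\ltwods}^2
= 2\mu_0(\inc\bv{M}^k,\bv{H}^k) + \tfrac{2\mu_0(1+\permit)\dt}{\chartime}(\bv{M}^k,\bv{H}^k) .
\end{align*}

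Summing over $k$ and applying Abel's transformation \eqref{summation} to $\sum_k(\inc\bv{M}^k,\bv{H}^k)$ produces boundary terms $\mu_0(\bv{M}^K,\bv{H}^K)$, $\mu_0(\bv{M}^0,\bv{H}^0)$ and the interior sum $\sum_{k=1}^{K-1}(\bv{M}^k,\inc\bv{H}^{k+1})$. I would then apply weighted Young inequalities so that the $\|\bv{M}^k\|_{\ltwods}^2$ pieces are absorbed into the coercive term $\tfrac{\mu_0}{\chartime}\|\bv{M}^k\|_{\ltwods}^2$ inside $\mathscr{D}_{h,\dt}^k$, the terminal $\|\bv{M}^K\|_{\ltwods}^2$ is absorbed into $\tfrac12\mathscr{E}_{h,\dt}^K$, and the remaining $\|\bv{H}^k\|_{\ltwods}^2$, $\|\tfrac{\inc\bv{H}^{k+1}}{\dt}\|_{\ltwods}^2$, $\|\bv{H}^0\|_{\ltwods}^2$ and $\|\bv{H}^K\|_{\ltwods}^2$ contributions assemble into $\mathscr{F}_{h,\dt}^\dt + \mu_0\|\bv{H}^0\|_{\ltwods}^2 + 2\mu_0\|\bv{H}^K\|_{\ltwods}^2$; the final bound by a constant $c$ then follows from the smoothness of $\ha$ and of the initial data. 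The main obstacle is the bookkeeping in this last step: calibrating the Young weights to produce exactly the factor $3$ in front of $\mu_0\chartime\sum_{k=1}^{K-1}\dt\,\|\tfrac{\inc\bv{H}^{k+1}}{\dt}\|_{\ltwods}^2$ and the coefficient $1 + 3\permit^2$ of $\|\bv{H}^k\|_{\ltwods}^2$ in $\mathscr{F}_{h,\dt}^\dt$, while retaining $\tfrac12\mathscr{E}_{h,\dt}^K$ (rather than a smaller multiple) on the left.
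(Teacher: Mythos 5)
Your proposal is correct and follows essentially the same route as the paper: the same energy multipliers $\Utest=2\dt\bv{U}^k$, $\Wtest=2\dt\bv{W}^k$, $\Mtest=2\mu_0\dt\bv{M}^k$, the same cancellation of the Kelvin/torque residuals by testing \eqref{Mconver} with $\Mtest=2\mu_0\dt\bv{H}^k$ (legitimate since $\bv{H}^k=\text{I}_{\FEspaceM}\ha^k\in\FEspaceM$), the same per-step identity, summation with Abel's formula \eqref{summation} and Young's inequalities, and the same Leray--Schauder argument for existence as in Theorem~\ref{localexist}. The constant-calibration you flag as the remaining obstacle is exactly the bookkeeping the paper itself leaves implicit, and it does work out with standard weighted Young inequalities.
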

\begin{proof} Set $\Utest = 2 \dt \bv{U}^k$, $\Wtest = 2 \dt \bv{W}^k$, $\Mtest = 2 \dt \mu_0 \bv{M}^k$, in \eqref{converscheme} and add the results. Using \eqref{eq:Honezdcurldiv} and identity \eqref{sumid}, we get
\begin{align}
\label{firststepdisc2}
\begin{split}
2 \inc \mathscr{E}_{h,\dt}^k
+ 2 \mathscr{I}_{h,\dt}^k
+ 2 \dt \mathscr{D}_{h,\dt}^k &= 
2 \mu_0 \dt (\bv{M}^{k} \times \bv{H}^{k},\bv{W}^k) \\
&+ 2 \mu_0 \dt \trilm\lp \bv{U}^k, \bv{H}^{k}, \bv{M}^{k}\rp
+ \tfrac{2 \dt \mu_0 \permit}{\chartime} (\bv{M}^k,\bv{H}^k) \, .
\end{split}
\end{align}
As in the proof of Proposition~\ref{disclemma}, to deal with the trilinear terms $2 \mu_0 \dt \trilm\lp \bv{U}^k, \bv{H}^{k},\bv{M}^{k}\rp$ and $2 \mu_0 \dt (\bv{M}^{k} \times \bv{H}^{k},\bv{W}^k)$ we set $\Mtest = 2 \mu_0 \dt\bv{H}^k$ in \eqref{Mconver}. In doing so, we obtain
\begin{align}
\label{identity2}
\begin{split}
\tfrac{2 \mu_0 \dt \permit}{\chartime} \| \bv{H}^k \|_{\ltwods}^2 &=
- 2\mu_0 \dt \lp\bv{M}^k\times\bv{H}^k,\bv{W}^k \rp
- 2\mu_0 \dt \trilm\lp \bv{U}^{k}, \bv{H}^k,\bv{M}^{k} \rp \\
&+ 2\mu_0 \alp \inc\bv{M}^k,\bv{H}^k \arp
+ \tfrac{2\mu_0 \dt}{\chartime} \lp\bv{M}^k,\bv{H}^k \rp \, .
\end{split}
\end{align}
Adding \eqref{identity2} to \eqref{firststepdisc2} we obtain
\begin{align*}
2\inc \mathscr{E}_{h,\dt}^k
+ 2 \mathscr{I}_{h,\dt}^k
+ 2 \dt \mathscr{D}_{h,\dt}^k
+ \tfrac{2 \mu_0 \dt \permit}{\chartime} \| \bv{H}^k \|_{\ltwods}^2 = \tfrac{2 \dt \mu_0}{\chartime}(1+\permit) (\bv{M}^k,\bv{H}^k)
+ 2\mu_0 ( \inc \bv{M}^k, \bv{H}^k ) \, .
\end{align*}
The rest is just a matter of adding over $k$, using the summation by parts formula \eqref{summation} to $\sum_{k = 1}^{K} \alp \inc\bv{M}^{k},\bv{H}^k \arp$ and applying Cauchy-Schwarz and Young's inequalities with appropriate constants. Finally, existence is guaranteed by an analogous argument to that one used in Theorem \ref{localexist}, it only requires a local (in time) estimate which we omit to avoid repetitions. This concludes the proof.
\end{proof}

\begin{lemma}[estimates for the discrete time derivatives]\label{derivestlemma} 
The following estimates for $\dt^{-1}\inc\bv{U}^{k}$ and $\dt^{-1}\inc\bv{W}^{k}$ hold
\begin{align*}
\LN \tfrac{\inc\bv{U}^{\dt}}{\dt}\RN_{\ell^{4/3}(\Vspace^{*})}
+ \LN \tfrac{\inc\bv{W}^k}{\dt}\RN_{\ell^{4/3}(\bv{H}^{-1})} \leq c  \, , 
\end{align*}
where the constant $c < \infty$ only depends on $\ha$, $\partial_t\ha$, and the initial data $\bv{u}_0$, $\bv{w}_0$, $\bv{m}_0$.
\end{lemma}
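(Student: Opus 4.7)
The plan is to dualize the discrete equations \eqref{Uconver} and \eqref{Wconver} against arbitrary test functions $\utest \in \Vspace$ and $\wtest \in \hzerod$, reducing to bounding the right-hand side terms in terms of $\|\nabla\utest\|_{\ltwods}$ and $\|\nabla\wtest\|_{\ltwods}$ respectively. First I would observe that $\inc\bv{U}^k \in \FEspaceUdivfree$, since by \eqref{incompdisc} we have $(\Ptest, \diver{}\inc\bv{U}^k) = 0$ for all $\Ptest \in \FEspaceP$. Consequently, by the $L^2$-projection property \eqref{ltwodef},
\begin{equation*}
\big(\tfrac{\inc\bv{U}^k}{\dt}, \utest\big) = \big(\tfrac{\inc\bv{U}^k}{\dt}, \Pi_{\FEspaceUdivfree}\utest\big),
\end{equation*}
and inserting $\Utest = \Pi_{\FEspaceUdivfree}\utest \in \FEspaceUdivfree$ in \eqref{Uconver} kills the pressure contribution. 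Dividing by $\|\nabla\utest\|_{\ltwods}$ and exploiting the $\hzerods$-stability \eqref{Vhonestab}, the estimate reduces to bounding every term on the right-hand side of \eqref{Uconver} by a multiple of $\|\nabla\Pi_{\FEspaceUdivfree}\utest\|_{\ltwods}$, times a quantity that is $\ell^{4/3}$-summable in time.

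The main obstacle is the convective trilinear term, which is the one that drives the $\ell^{4/3}$ (rather than $\ell^2$) scaling, exactly as in the classical three-dimensional Navier--Stokes analysis. Using skew-symmetry \eqref{eq:bhmskewns}, the interpolation inequality $\|\bv{U}\|_{L^3} \le c\,\|\bv{U}\|_{\ltwods}^{1/2}\|\nabla\bv{U}\|_{\ltwods}^{1/2}$, and the Sobolev embedding $\hones \hookrightarrow L^6$ in dimension three, I get
\begin{equation*}
|\tril_h(\bv{U}^k,\bv{U}^k,\Pi_{\FEspaceUdivfree}\utest)| \le c\,\|\bv{U}^k\|_{\ltwods}^{1/2}\|\nabla\bv{U}^k\|_{\ltwods}^{3/2}\|\nabla\utest\|_{\ltwods}.
\end{equation*}
Raising the bracket to the $\tfrac{4}{3}$ power and summing,
\begin{equation*}
\sum_{k} \dt \,\|\bv{U}^k\|_{\ltwods}^{2/3}\|\nabla\bv{U}^k\|_{\ltwods}^{2} \le \|\bv{U}^{\dt}\|_{\ell^\infty(\ltwods)}^{2/3}\,\|\nabla\bv{U}^{\dt}\|_{\ell^2(\ltwods)}^{2} \le c,
\end{equation*}
by the discrete stability estimate of Proposition~\ref{discenerglemma2}. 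The remaining terms are easier: the viscous contribution $\nu(\nabla\bv{U}^k,\nabla\Pi_{\FEspaceUdivfree}\utest)$ and the Coriolis-type $2\nu_r(\curl{W}^k,\Pi_{\FEspaceUdivfree}\utest)$ are bounded in $\ell^2(\text{time})$; the Kelvin force $\mu_0\trilm(\Pi_{\FEspaceUdivfree}\utest,\bv{H}^k,\bv{M}^k)$ has vanishing jump terms (since $\bv{H}^k$ is continuous by (A4)--(A5)), and using that $\|\bv{H}^k\|_{W^{1,\infty}}\lesssim\|\ha\|_{W^{1,\infty}}$ is uniformly bounded via \eqref{optestim}, it is controlled by $c\,\|\nabla\Pi_{\FEspaceUdivfree}\utest\|_{\ltwods}\|\bv{M}^k\|_{\ltwods}$, which is $\ell^\infty$ in time. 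All $\ell^\infty$ and $\ell^2$ bounds embed into $\ell^{4/3}$ on a finite time interval by discrete Hölder.

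The estimate for $\dt^{-1}\inc\bv{W}^k$ in $\hmoned$ proceeds verbatim, this time using the $\hzerod$-stability \eqref{honestabW} of $\Pi_{\FEspaceW}$ and testing \eqref{Wconver} with $\Wtest = \Pi_{\FEspaceW}\wtest$. The convective term $\inertiamom\tril_h(\bv{U}^k,\bv{W}^k,\Pi_{\FEspaceW}\wtest)$ is estimated by
\begin{equation*}
c\,\|\bv{U}^k\|_{\ltwods}^{1/2}\|\nabla\bv{U}^k\|_{\ltwods}^{1/2}\|\nabla\bv{W}^k\|_{\ltwods}\|\nabla\wtest\|_{\ltwods},
\end{equation*}
and a Hölder argument with exponents $3$ and $3/2$ combined with the stability estimate gives the required $\ell^{4/3}$ summability. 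The Coriolis-type term $2\nu_r(\curl{U}^k,\Pi_{\FEspaceW}\wtest)$, the zeroth-order term $4\nu_r(\bv{W}^k,\Pi_{\FEspaceW}\wtest)$, and the torque $\mu_0(\bv{M}^k\times\bv{H}^k,\Pi_{\FEspaceW}\wtest)$ are controlled uniformly in time (using $\|\bv{H}^k\|_{\linfds}$ bounded), hence trivially lie in $\ell^{4/3}$. Combining both velocity and angular-velocity estimates yields the asserted bound.
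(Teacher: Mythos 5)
Your proposal is correct and follows essentially the same route as the paper: dualize \eqref{Uconver} and \eqref{Wconver} against $\Vspace$ and $\hzerod$ via the $\hzerods$-stable projections $\Pi_{\FEspaceUdivfree}$ and $\Pi_{\FEspaceW}$, bound the convective terms with the $L^3$--$L^6$ interpolation and the uniform energy bounds of Proposition~\ref{discenerglemma2}, and control the remaining terms using the $\linfds$-bounds on $\bv{H}^k$ before raising to the power $4/3$ and summing in time. The only cosmetic difference is that you keep the $\|\bv{U}^k\|_{\ltwods}^{1/2}$ factors explicit where the paper absorbs them into the constant via the $\ell^\infty(\ltwods)$ bound.
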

\begin{proof} Following \cite{Feng2006,grun2013}, we use \eqref{Uconver}, \eqref{ltwodef}, \eqref{Vhonestab},   \eqref{eq:bhmskew}, \eqref{secstepdisc2}, and the regularity of the data $\ha$, to get:
{\allowdisplaybreaks\begin{align}
\label{disctimederestU}
\begin{split}
\LN \tfrac{\inc\bv{U}^{k}}{\dt}\RN_{\Vspace^*} 
&= \sup_{ \bv{v}\in \Vspace}
\frac{\alp \frac{ \inc\bv{U}^{k}}\dt, \bv{v} \arp}{\ \|\bv{v}\|_{\honeds}} 
= \sup_{ \bv{v}\in \Vspace }
\frac{\alp \frac{ \inc\bv{U}^{k}}\dt, \Pi_{\FEspaceUdivfree}[\bv{v}] \arp}{\ \|\bv{v}\|_{\honeds}} \lesssim
\sup_{ \bv{v}\in \Vspace }
\frac{\alp \frac{ \inc\bv{U}^{k}}\dt, \Pi_{\FEspaceUdivfree}[\bv{v}] \arp}{\ \|\Pi_{\FEspaceUdivfree}[\bv{v}]\|_{\honeds}}
\\
&\lesssim \|\bv{U}^{k}\|_{\bv{L}^3} \|\bv{U}^{k}\|_{\bv{L}^6}
+ \|\diver{U}^{k}\|_{L^2} \|\bv{U}^{k}\|_{\bv{L}^3}
+ \|\gradv{U}^{k}\|_{\ltwods}
+ \|\bv{W}^k\|_{\ltwods} \\
&+\|\nabla\bv{H}^{k}\|_{\linfds} \|\bv{M}^{k}\|_{\ltwods} 
+ \|\bv{H}^{k}\|_{\linfds} \|\bv{M}^{k}\|_{\ltwods} \\
&\lesssim \|\nabla\bv{U}^{k}\|_{\ltwods}^{3/2} 
+ \|\gradv{U}^{k}\|_{\ltwods}
+ \|\gradv{}\bv{W}^k\|_{\ltwods}
+ \|\bv{M}^{k}\|_{\ltwods} \\
&\lesssim \Big( \|\nabla\bv{U}^{k}\|_{\ltwods}^{2}
+ \|\gradv{U}^{k}\|_{\ltwods}^{4/3}
+ \|\nabla\bv{W}^{k}\|_{\ltwods}^{4/3}
+ \|\bv{M}^{k}\|_{\ltwods}^{4/3}  \Big)^{3/4} \, , 
\end{split} 
\end{align}}
where we have also used the estimate 
\begin{align*}
\|\bv{U}^{k}\|_{\bv{L}^3} \leq \|\bv{U}^{k}\|_{\bv{L}^2}^{1/2}
\|\bv{U}^{k}\|_{\bv{L}^6}^{1/2}
\lesssim \|\bv{U}^{k}\|_{\bv{L}^6}^{1/2}
\lesssim \|\nabla\bv{U}^{k}\|_{\ltwods}^{1/2} \, ,
\end{align*}
which relies on \eqref{secstepdisc2}, namely $\|\bv{U}^k\|_{\ltwods} \leq c$ uniformly in $k$.   Raise \eqref{disctimederestU} to power $4/3$, multiply by $\dt$, add in time, and use \eqref{secstepdisc2} to get the desired estimate for $\dt^{-1} \inc \bv{U}^k$.

Similarly, for $\tfrac{\inc\bv{W}^{k}}{\dt}$, we use \eqref{Wconver}, \eqref{honestabW}, \eqref{secstepdisc2}, and the regularity of the data $\ha$:
{\allowdisplaybreaks
\begin{align}
\label{disctimederestW}
\begin{split}
\LN \tfrac{\inc\bv{W}^{k}}{\dt}\RN_{\bv{H}^{-1}} 
&= \sup_{ \wtest \in \hzerod }
\frac{\alp \frac{ \inc\bv{W}^{k}}\dt, \wtest \arp}{\ \|\wtest\|_{\honeds}}
= \sup_{ \wtest\in \hzerod }
\frac{\alp \frac{ \inc\bv{W}^{k}}\dt, \Pi_{\FEspaceW}[\wtest] \arp}{\ \|\wtest\|_{\honeds}} \\
&\lesssim \sup_{ \wtest\in \hzerod } \frac{\alp \frac{ \inc\bv{W}^{k}}\dt, \Pi_{\FEspaceW}[\wtest] \arp}{\ \|\Pi_{\FEspaceW}[\wtest]\|_{\honeds}} \lesssim \|\bv{U}^{k}\|_{\bv{L}^4} \|\bv{W}^{k}\|_{\bv{L}^4} 
+ \|\diver{U}^{k}\|_{L^2} \|\bv{W}^{k}\|_{\bv{L}^3} \\
&+ \|\gradv{W}^{k}\|_{\ltwods}
+ \|\diver{}\bv{W}^k\|_{\ltwods} 
+ \|\bv{W}^k\|_{\ltwods}
+ \|\bv{U}^k\|_{\ltwods}\\
&+ \|\bv{M}^{k}\|_{\ltwods} \|\bv{H}^{k}\|_{\linfds}  \\
&\lesssim \|\nabla\bv{U}^{k}\|_{\ltwods}^{3/2}
+ \|\nabla\bv{W}^{k}\|_{\ltwods}^{3/2}
+ \|\gradv{}\bv{W}^k\|_{\ltwods} 
+ \|\bv{U}^{k}\|_{\ltwods}
+ \|\bv{M}^{k}\|_{\ltwods} \\
&\lesssim \Big( \|\nabla\bv{U}^{k}\|_{\ltwods}^{2}
+ \|\nabla\bv{W}^{k}\|_{\ltwods}^{2}
+ \|\gradv{}\bv{W}^k\|_{\ltwods}^{4/3} 
+ \|\bv{U}^{k}\|_{\ltwods}^{4/3}
+ \|\bv{M}^{k}\|_{\ltwods}^{4/3}  \Big)^{3/4} \, ,
\end{split}
\end{align}}
where we have also used the inequality 
\begin{align*}
\|\bv{U}^{k}\|_{\bv{L}^4} \leq \|\bv{U}^{k}\|_{\bv{L}^2}^{1/4}
\|\bv{U}^{k}\|_{\bv{L}^6}^{3/4}
\lesssim \|\bv{U}^{k}\|_{\bv{L}^6}^{3/4}
\lesssim \|\nabla\bv{U}^{k}\|_{\ltwods}^{3/2} \, , 
\end{align*}
and an analogous estimate for $\bv{W}^k$. Raise \eqref{disctimederestW} to the power $4/3$, multiply by $\dt$, add in time, and use \eqref{secstepdisc2} to get the desired estimate for $\dt^{-1} \inc \bv{W}^k$. 
\end{proof}
\subsection{Convergence}
\label{convschemesec}
We want to show that solutions generated by the scheme defined by \eqref{converscheme} and \eqref{newHdef}, with initial conditions \eqref{initROSens}, converge to solutions of the ultra weak formulation defined in \eqref{weakformulation}. However, the scheme  \eqref{converscheme} generates a sequence of functions $\lb \bv{U}^k, \bv{P}^k, \bv{W}^k, \bv{M}^k \rb_{k=0}^K$ corresponding to the nodes $\lb t^k \rb_{k=0}^K$, rather than space-time functions. In addition, the scheme \eqref{converscheme} does not have a variational structure in time. In order to reconcile these features we will rewrite scheme \eqref{converscheme} as a space-time variational formulation. For this purpose, we start by defining the space-time functions $\bv{U}_{h\dt}$, $P_{h\dt}$, $\bv{W}_{h\dt}$,  $\bv{M}_{h\dt}$,  such that
\begin{align}\label{pwconst}
\bv{U}_{h\dt} = \bv{U}^{k} , \, 
P_{h\dt} = P^{k} , \, 
\bv{W}_{h\dt} = \bv{W}^{k} , \, 
\bv{M}_{h\dt} = \bv{M}^{k} , \,
\bv{H}_{h\dt} = \bv{H}^{k} \ \ \forall \, t \in (t^{k-1},t^k] \, , 
\end{align}
which are piecewise constant in time. Even though these functions are not continuous in time, their point values are well defined, in particular they are left-continuous at the nodes $\lb t^k \rb_{k=0}^K$:
\begin{align}\label{lefcont}
\bv{U}_{h\dt} (t^k) = 
\lim_{t \nearrow t^k} \bv{U}_{h\dt} (t) \ \ \ \forall \ 0 \leq k \leq K \, .
\end{align}
Using the summation by parts formula \eqref{summation}, we can rewrite scheme \eqref{converscheme} in terms of $\lb \bv{U}_{h\dt}, P_{h\dt}, \bv{W}_{h\dt}, \bv{M}_{h\dt}\rb$ as follows:

{\allowdisplaybreaks
\begin{subequations}
\label{reformulated}
\begin{align}
\begin{split}
&(\bv{U}_{h\dt}(\tf),\Utest_{h\dt}(\tf)) 
- \int_{0}^{\tf-\dt} \alp \bv{U}_{h\dt},\tfrac{\Utest_{h\dt}(\cdot+\dt) - \Utest_{h\dt}}{\dt} \arp 
+ \tril_h\lp\bv{U}_{h\dt},\bv{U}_{h\dt},\Utest_{h\dt}\rp \\
& \ \ \ \ + \int_{0}^{\tf} \lp \nunot \gradv{U}_{h\dt},\gradv{}\Utest_{h\dt} \rp 
- \lp P_{h\dt}, \diver{}\Utest_{h\dt} \rp = (\bv{U}_{h\dt}(0),\Utest_{h\dt}(0)) \\
&\ \ \ \ + \int_{0}^{\tf} 2 \nu_r (\curl{}\bv{W}_{h\dt},\Utest_{h\dt})
+ \mu_0 \trilm\lp \Utest_{h\dt},\bv{H}_{h\dt},\bv{M}_{h\dt}\rp \, , 
\end{split} \\
\label{SNS2DG}
&\ \ \ \ \int_{0}^{\tf} \lp \Ptest_{h\dt}, \diver{U}_{h\dt}\rp = 0 \, , \\
\begin{split}\label{SspinDG}
&\inertiamom(\bv{W}_{h\dt}(\tf),\Wtest_{h\dt}(\tf)) 
-  \int_{0}^{\tf-\dt} \inertiamom \alp \bv{W}_{h\dt},\tfrac{\Wtest_{h\dt}(\cdot+\dt) - \Wtest_{h\dt}}{\dt} \arp \\
&\ \ \ \ + \int_{0}^{\tf} \inertiamom \tril_h \lp \bv{U}_{h\dt},\bv{W}_{h\dt},\Wtest_{h\dt}\rp 
+ c_1 (\gradv{}\bv{W}_{h\dt} ,\gradv{}\Wtest_{h\dt}) \\
&\ \ \ \ + c_2 \lp\diver{}\bv{W}_{h\dt},\diver{}\Wtest_{h\dt}\rp 
+ 4 \nu_r (\bv{W}_{h\dt},\Wtest_{h\dt}) = \inertiamom (\bv{W}_{h\dt}(0),\Wtest_{h\dt}(0) ) \\
&\ \ \ \ + \int_{0}^{\tf} 2 \nu_r \lp \curl{U}_{h\dt}, \Wtest_{h\dt} \rp 
+ \mu_0 (\bv{M}_{h\dt} \times \bv{H}_{h\dt},\Wtest_{h\dt}) \, , 
\end{split} \\
\begin{split}\label{SMag1DG}
&(\bv{M}_{h\dt}(\tf),\Mtest_{h\dt}(\tf)) 
- \int_{0}^{\tf-\dt} \alp \bv{M}_{h\dt},\tfrac{\Mtest_{h\dt}(\cdot+\dt) - \Mtest_{h\dt}}{\dt} \arp \\
&\ \ \ \ - \int_{0}^{\tf} \trilm\lp \bv{U}_{h\dt},\Mtest_{h\dt},\bv{M}_{h\dt}\rp 
+ (\bv{M}_{h\dt} \times \bv{W}_{h\dt},\Mtest_{h\dt}) 
+ \tfrac{1}{\chartime} \lp\bv{M}_{h\dt},\Mtest_{h\dt}\rp \\
&\ \ \ \ = (\bv{M}_{h\dt}(0),\Mtest_{h\dt}(0) )
+ \tfrac{\permit}{\chartime} \int_{0}^{\tf}  \lp \bv{H}_{h\dt}, \Mtest_{h\dt} \rp \, , 
\end{split} 
\end{align}
\end{subequations}}
for every $\lb\Utest_{h\dt}, \Ptest_{h\dt}, \Wtest_{h\dt}, \Mtest_{h\dt}, \rb \in \FEspaceUht \times \FEspacePht \times \FEspaceWht \times \FEspaceMht $, where
{\allowdisplaybreaks
\begin{align*}
\begin{split}
\FEspaceUht &= \lb \Utest_{h\dt} \in L^2(0,\tf,\FEspaceU) \ \Big| \  \Utest_{h\dt}\big|_{(t^{k-1},t^k]} \in \FEspaceU \otimes \mathbb{P}_0((t^{k-1},t^k]) \, , \ 0 \leq k \leq K  \rb \, , \\
\FEspacePht&= \lb \Ptest_{h\dt} \in L^2(0,\tf,\FEspaceP) \ \Big| \  \Ptest_{h\dt}\big|_{(t^{k-1},t^k]} \in \FEspaceP \otimes \mathbb{P}_0((t^{k-1},t^k]) \, , \ 0 \leq k \leq K  \rb \, , \\
\FEspaceWht &= \lb \Wtest_{h\dt} \in L^2(0,\tf,\FEspaceW) \ \Big| \  \Wtest_{h\dt}\big|_{(t^{k-1},t^k]} \in \FEspaceW \otimes \mathbb{P}_0((t^{k-1},t^k]) \, , \ 0 \leq k \leq K  \rb \, , \\
\FEspaceMht &= \lb \Mtest_{h\dt} \in L^2(0,\tf,\FEspaceM) \ \Big| \  \Mtest_{h\dt}\big|_{(t^{k-1},t^k]} \in \FEspaceM \otimes \mathbb{P}_0((t^{k-1},t^k])  \, , \ 0 \leq k \leq K  \rb \, , \\
\end{split}
\end{align*}}
and $\cdot + \dt$ and $\cdot - \dt$ (in \eqref{reformulated}) denote positive and negative shifts in time of size $\dt$. Expression \eqref{reformulated} is the reinterpretation of the Backward-Euler method as a zero-order Discontinuous Galerkin scheme (see for instance \cite{ErnGuermond,Walk2005,LiuWalk2007,MR2249024}). The difference between \eqref{converscheme} and \eqref{reformulated} is merely cosmetic, since they are equivalent formulations of the same scheme, but clearly \eqref{reformulated} has the right structure if we want to compare it with \eqref{weakformulation}. Note also that the choice of half-open intervals $(t^{k-1},t^k]$ in \eqref{pwconst}, leading to the left-continuity \eqref{lefcont} is consistent with upwinding fluxes --- we choose traces from the direction of flow of information, which is also consistent with causality.

\begin{lemma}[weak convergence]\label{weakconv} The family of functions $\lb\bv{U}_{h\dt},\bv{W}_{h\dt},\bv{M}_{h\dt}\rb_{h,\dt>0}$, defined in \eqref{pwconst} have the following convergence properties:
{\allowdisplaybreaks
\begin{align*}
&\bv{U}_{h\dt}  \xrightharpoonup[]{h,\dt \rightarrow 0}_* \bv{u}^* \ \text{in }L^{\infty}(0,\tf,\ltwod) \, ,\\
&\bv{U}_{h\dt} \xrightharpoonup[]{h,\dt \rightarrow 0 } \bv{u}^* \ \text{in }L^2(0,\tf,\honed) \, , \\
&\bv{W}_{h\dt}  \xrightharpoonup[]{h,\dt \rightarrow 0}_* \bv{w}^* \ \text{in }L^{\infty}(0,\tf,\ltwod) \, ,\\
&\bv{W}_{h\dt} \xrightharpoonup[]{h,\dt \rightarrow 0 } \bv{w}^* \ \text{in }L^2(0,\tf,\honed) \, , \\
&\bv{M}_{h\dt} \xrightharpoonup[]{h,\dt \rightarrow 0 }_* \bv{m}^* \ \text{in }L^{\infty}(0,\tf,\ltwod) \, , \\
&\bv{M}_{h\dt} \xrightharpoonup[]{h,\dt \rightarrow 0 } \bv{m}^* \ \text{in }L^{2}(0,\tf,\ltwod) \, , 
\end{align*}}
for some functions $\bv{u}^*$, $\bv{w}^*$ and $\bv{m}^*$. Here $\xrightharpoonup[]{}_*$ denotes weak-star convergence, and $h$ and $\dt$ tend to zero independently. 
\end{lemma}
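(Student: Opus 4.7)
The plan is to deduce all six convergences directly from the uniform a priori bounds already established in Proposition~\ref{discenerglemma2} together with Banach--Alaoglu-type weak/weak-$*$ compactness. The estimate \eqref{secstepdisc2} has the form
\begin{equation*}
  \tfrac{1}{2}\mathscr{E}_{h,\dt}^K + \dt^{-1}\|\mathscr{I}_{h,\dt}^\dt\|_{\ell^1} + \|\mathscr{D}_{h,\dt}^\dt\|_{\ell^1} \leq c,
\end{equation*}
with $c$ independent of $h$ and $\dt$ (it depends only on $\ha$, $\partial_t\ha$, and the initial data, all of which are fixed). I would first read off what this bound means at the level of the piecewise-constant-in-time reconstructions \eqref{pwconst}.

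First, the uniform boundedness of $\mathscr{E}_{h,\dt}^k$ in $k$ gives $\|\bv{U}^k\|_{\ltwods}^2 + \inertiamom\|\bv{W}^k\|_{\ltwods}^2 + \mu_0\|\bv{M}^k\|_{\ltwods}^2 \leq 2c$ for every $k$. Because $\bv{U}_{h\dt}$, $\bv{W}_{h\dt}$, $\bv{M}_{h\dt}$ are piecewise constant with values $\bv{U}^k$, $\bv{W}^k$, $\bv{M}^k$, this yields uniform bounds
\begin{equation*}
  \|\bv{U}_{h\dt}\|_{L^\infty(0,\tf;\ltwod)} + \|\bv{W}_{h\dt}\|_{L^\infty(0,\tf;\ltwod)} + \|\bv{M}_{h\dt}\|_{L^\infty(0,\tf;\ltwod)} \leq C.
\end{equation*}
Since $L^\infty(0,\tf;\ltwod)$ is the dual of the separable space $L^1(0,\tf;\ltwod)$, the Banach--Alaoglu theorem delivers subsequences (not relabeled) converging weakly-$*$ to limits $\bv{u}^*$, $\bv{w}^*$, $\bv{m}^*$ in that space.

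Next, from $\|\mathscr{D}_{h,\dt}^\dt\|_{\ell^1} \leq c$ I collect the dissipative contributions: $\nu\sum_k \dt\|\gradv{U}^k\|_{\ltwods}^2$, $c_1\sum_k \dt\|\gradv{W}^k\|_{\ltwods}^2$, $c_2\sum_k \dt\|\diver{W}^k\|_{\ltwods}^2$, and $\tfrac{\mu_0}{\chartime}\sum_k \dt\|\bv{M}^k\|_{\ltwods}^2$ are all bounded. In terms of the reconstructions this gives
\begin{equation*}
  \|\gradv{U}_{h\dt}\|_{L^2(0,\tf;\ltwods)} + \|\gradv{W}_{h\dt}\|_{L^2(0,\tf;\ltwods)} + \|\bv{M}_{h\dt}\|_{L^2(0,\tf;\ltwods)} \leq C.
\end{equation*}
Since $\bv{U}_{h\dt}, \bv{W}_{h\dt} \in \hzerods$ pointwise in time (they inherit this from $\FEspaceU,\FEspaceW\subset\hzerod$), Poincar\'e's inequality together with the gradient bounds yields $L^2(0,\tf;\honed)$ bounds for both. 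The space $L^2(0,\tf;\honed)$ is reflexive, so up to another subsequence we obtain weak convergence to some limits in $L^2(0,\tf;\honed)$; likewise $\bv{M}_{h\dt}$ converges weakly in the reflexive space $L^2(0,\tf;\ltwod)$.

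Finally, I would identify the two limits. The weak-$*$ $L^\infty(0,\tf;\ltwod)$ limit and the weak $L^2(0,\tf;\honed)$ (resp.\ $L^2(0,\tf;\ltwod)$) limit must coincide, because both convergences imply convergence in the sense of distributions on $(0,\tf)\times\Omega$ and distributional limits are unique. Relabeling, the limits are the $\bv{u}^*$, $\bv{w}^*$, $\bv{m}^*$ announced in the statement, and a standard diagonal/subsequence extraction argument selects a single sequence realizing all six convergences simultaneously. I do not see any serious obstacle: the only subtlety is making sure the same subsequence works for all six statements, which is handled by the diagonal argument once the individual weak compactness claims are in place.
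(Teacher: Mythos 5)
Your proposal is correct and is exactly the argument the paper intends: the paper's own proof is the one-line remark that the lemma is ``a direct consequence of Proposition~\ref{discenerglemma2} and definition \eqref{pwconst},'' i.e.\ precisely the uniform $L^\infty(\ltwods)$ and dissipation bounds combined with Banach--Alaoglu / reflexive weak compactness and identification of limits, as you spell out. The only cosmetic caveat is that, as is standard (and implicit in the paper), the convergences hold up to extraction of subsequences, which your diagonal argument handles.
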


\begin{proof} This is a direct consequence of Proposition~\ref{discenerglemma2} and definition \eqref{pwconst}. 
\end{proof}

Note that these modes of convergence are not strong enough to pass to the limit in every term of \eqref{reformulated}, so that the weak limits $\bv{u}^*$, $\bv{w}^*$ and $\bv{m}^*$ of the previous lemma might not necessarily be solutions of \eqref{weakformulation}. In order to improve these estimates we will use the classical Aubin-Lions lemma (\cf~\cite{Lions1969}):

\begin{lemma}[Aubin-Lions]\label{aubinlemma} Let $B_0$, $B$ and $B_1$ denote three Banach spaces such that
\begin{align*}
 B_0 \subset B \subset B_1 \, , 
\end{align*}
with $B_0$ and $B_1$ being reflexive, and $ B_0 \subset \subset B$. We define the space $W$ \begin{align*}
W = \lb w \, \big| \, w \in L^{p_0}(0,\tf,B_0) , \, w_t \in L^{p_1}(0,\tf,B_1)\rb \, , 
\end{align*}
with $1 < p_0, p_1 < \infty$, endowed with the following norm
\begin{align*}
\|w\|_{W} = \|w\|_{L^{p_0}(0,\tf,B_0)} +  \|w_t\|_{L^{p_1}(0,\tf,B_1)} \, .
\end{align*}
Then, the space $W$ is compactly embedded in $L^{p_0}(0,\tf,B)$.
\end{lemma}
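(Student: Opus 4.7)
The plan is to follow the classical route through Ehrling's interpolation inequality (sometimes called Lions' lemma) combined with a vector-valued Riesz--Fréchet--Kolmogorov compactness criterion. The argument splits naturally into three steps: (i) establish Ehrling's inequality, (ii) upgrade weak compactness to strong compactness in $L^{p_0}(0,\tf;B_1)$ using an equicontinuity-in-time estimate driven by the bound on $w_t$, and (iii) interpolate to obtain strong compactness in $L^{p_0}(0,\tf;B)$.

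First, I would prove that for every $\epsilon > 0$ there exists $C_\epsilon > 0$ such that
\begin{equation*}
\|v\|_B \leq \epsilon \, \|v\|_{B_0} + C_\epsilon \, \|v\|_{B_1}, \qquad \forall \, v \in B_0.
\end{equation*}
The proof is by contradiction: the negation produces a sequence $v_n \in B_0$ with $\|v_n\|_{B_0} = 1$, $\|v_n\|_{B_1} \to 0$, and $\|v_n\|_B \geq \epsilon_0 > 0$. The compact embedding $B_0 \subset \subset B$ extracts a subsequence converging strongly in $B$ to some limit $v^*$, while the continuous embedding $B \subset B_1$ forces $v^* = 0$, contradicting $\|v^*\|_B \geq \epsilon_0$.

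Next, fix a bounded sequence $\{w_n\}$ in $W$. Reflexivity of $B_0, B_1$ (and hence of $L^{p_0}(0,\tf;B_0)$ and $L^{p_1}(0,\tf;B_1)$) yields a subsequence with $w_n \rightharpoonup w$ weakly in $L^{p_0}(0,\tf;B_0)$ and $\partial_t w_n \rightharpoonup \partial_t w$ in $L^{p_1}(0,\tf;B_1)$; replacing $w_n$ by $w_n - w$, I may assume $w = 0$. To control temporal oscillations, I would use the fundamental theorem of calculus together with Hölder's inequality:
\begin{equation*}
\|w_n(t+\tau) - w_n(t)\|_{B_1} \leq \int_t^{t+\tau} \|\partial_s w_n(s)\|_{B_1} \, ds \leq \tau^{1-1/p_1} \|\partial_t w_n\|_{L^{p_1}(0,\tf;B_1)} ,
\end{equation*}
so that $\|w_n(\cdot+\tau) - w_n\|_{L^{p_0}(0,\tf - \tau;B_1)} \to 0$ as $\tau \to 0$, uniformly in $n$. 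Combined with the uniform boundedness of $\{w_n\}$ in $L^{p_0}(0,\tf;B_1)$ (inherited from the chain $B_0 \hookrightarrow B \hookrightarrow B_1$), the Riesz--Fréchet--Kolmogorov criterion for Banach-valued $L^{p_0}$ spaces provides precompactness in $L^{p_0}(0,\tf;B_1)$; the weak limit being $0$ identifies the strong limit, so $w_n \to 0$ strongly in $L^{p_0}(0,\tf;B_1)$.

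Finally, I would integrate Ehrling's inequality in time and take $L^{p_0}$ norms:
\begin{equation*}
\|w_n\|_{L^{p_0}(0,\tf;B)} \leq \epsilon \, \|w_n\|_{L^{p_0}(0,\tf;B_0)} + C_\epsilon \, \|w_n\|_{L^{p_0}(0,\tf;B_1)}.
\end{equation*}
The first term is bounded by $\epsilon \cdot C$ uniformly in $n$ by the $W$-bound, while the second tends to $0$ by step two; sending $n \to \infty$ and then $\epsilon \to 0$ gives $w_n \to 0$ strongly in $L^{p_0}(0,\tf;B)$, which is the desired compactness. The delicate point I would expect to be most technical is justifying the vector-valued Riesz--Fréchet--Kolmogorov criterion and verifying that the (spatial) precompactness hypothesis at the level of time slices is not needed, because Ehrling plus the time-equicontinuity in $B_1$ already suffices once we have bounded in $L^{p_0}(B_0)$; this is where reflexivity and the restriction $1 < p_0, p_1 < \infty$ play an essential role.
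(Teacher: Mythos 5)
The paper does not actually prove this lemma: it is quoted as the classical Aubin--Lions compactness theorem with a citation to Lions' monograph, so there is no in-paper argument to compare against. Your proposal reproduces the standard textbook proof, and its architecture is sound: Ehrling's inequality via the usual contradiction-plus-compactness argument, strong compactness in $L^{p_0}(0,\tf;B_1)$ driven by the time-translation estimate that the bound on $w_t$ furnishes, and finally the integrated interpolation $\|w_n\|_{L^{p_0}(0,\tf;B)}\le \epsilon\,\|w_n\|_{L^{p_0}(0,\tf;B_0)}+C_\epsilon\,\|w_n\|_{L^{p_0}(0,\tf;B_1)}$ to upgrade to the target space. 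The one point you should state more carefully is the middle step: the ``spatial'' precompactness hypothesis of the vector-valued Riesz--Fr\'echet--Kolmogorov criterion is genuinely needed and cannot be waived --- translation equicontinuity alone never yields compactness in $L^{p_0}(0,\tf;B_1)$ when $B_1$ is infinite dimensional (consider $w_n(t)\equiv e_n$ for a weakly null normalized sequence: bounded, with vanishing translation modulus, yet not precompact). In your setting that hypothesis is supplied by the uniform bound in $L^{p_0}(0,\tf;B_0)$ together with the compact embedding $B_0\subset\subset B_1$, applied to Steklov averages $\frac{1}{h}\int_t^{t+h}w_n(s)\,ds$ rather than to individual time slices; this is precisely Simon's formulation of the compactness criterion. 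With that point made explicit the argument closes. A slightly more elementary alternative (the route in Lions and Temam) avoids Riesz--Fr\'echet--Kolmogorov entirely: since $w_t\in L^{p_1}(0,\tf;B_1)$ with $p_1>1$, each $w_n$ is continuous into $B_1$, one shows $w_n(t)\to 0$ in $B_1$ for every fixed $t$ from the weak convergence combined with the equicontinuity, and concludes by dominated convergence; both routes deliver the same statement, yours being closer in spirit to the modern Simon-type generalizations.
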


\begin{lemma}[strong $L^2(0,\tf,\ltwo)$ convergence]\label{strongL2lem} The families of functions $\lb\bv{W}_{h\dt},\bv{U}_{h\dt}\rb_{h,\dt>0}$ defined in \eqref{pwconst} have the following additional convergence properties:
\begin{align*}
&\bv{U}_{h\dt} \xrightarrow[]{h,\dt \rightarrow 0} \bv{u}^* \ \text{in }L^2(0,\tf,\ltwod) \, , \\
&\bv{W}_{h\dt} \xrightarrow[]{h,\dt \rightarrow 0} \bv{w}^* \ \text{in }L^2(0,\tf,\ltwod) \, , 
\end{align*}
for some functions $\bv{u}^*$ and $\bv{w}^*$.
\end{lemma}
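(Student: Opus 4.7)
The plan is to apply the Aubin--Lions Lemma~\ref{aubinlemma} to a piecewise linear (in time) reconstruction of the discrete solution and then transfer the strong convergence to the piecewise constant functions defined in \eqref{pwconst}.

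First I introduce, for $t \in (t^{k-1},t^k]$, the piecewise linear interpolants
\begin{align*}
\widetilde{\bv{U}}_{h\dt}(t) = \bv{U}^{k-1} + \tfrac{t-t^{k-1}}{\dt}\, \inc\bv{U}^{k}, \qquad
\widetilde{\bv{W}}_{h\dt}(t) = \bv{W}^{k-1} + \tfrac{t-t^{k-1}}{\dt}\, \inc\bv{W}^{k},
\end{align*}
whose distributional time derivatives are exactly $\partial_t \widetilde{\bv{U}}_{h\dt} = \dt^{-1}\inc\bv{U}^{k}$ and $\partial_t \widetilde{\bv{W}}_{h\dt} = \dt^{-1}\inc\bv{W}^{k}$ on $(t^{k-1},t^k)$. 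Proposition~\ref{discenerglemma2} yields uniform bounds
$\widetilde{\bv{U}}_{h\dt},\widetilde{\bv{W}}_{h\dt} \in L^\infty(0,\tf,\ltwod)\cap L^2(0,\tf,\honed)$, while Lemma~\ref{derivestlemma} provides $\partial_t\widetilde{\bv{U}}_{h\dt} \in L^{4/3}(0,\tf,\Vspace^*)$ and $\partial_t\widetilde{\bv{W}}_{h\dt} \in L^{4/3}(0,\tf,\hmoned)$, with bounds independent of $h$ and $\dt$.

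Next I invoke Lemma~\ref{aubinlemma} with the triples $B_0 = \honed$, $B = \ltwod$, $B_1 = \Vspace^*$ for $\widetilde{\bv{U}}_{h\dt}$ (and $B_1 = \hmoned$ for $\widetilde{\bv{W}}_{h\dt}$), $p_0 = 2$, $p_1 = 4/3$. The embedding $\honed \subset\subset \ltwod$ is the classical Rellich--Kondrachov result, and $\ltwod \hookrightarrow \Vspace^*$ continuously via $\bv{f}\mapsto(\bv{f},\cdot)$. Hence, up to a subsequence,
\begin{align*}
\widetilde{\bv{U}}_{h\dt} \rightarrow \bv{u}^*, \qquad \widetilde{\bv{W}}_{h\dt} \rightarrow \bv{w}^*  \quad\text{in } L^2(0,\tf,\ltwod),
\end{align*}
where the limits coincide with those of Lemma~\ref{weakconv} by uniqueness of weak limits (since the weak limits in $L^2(0,\tf,\ltwod)$ of $\widetilde{\bv{U}}_{h\dt}$ and $\bv{U}_{h\dt}$ must agree, as shown below).

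Finally, I close the argument by controlling the gap between the piecewise constant and piecewise linear reconstructions:
\begin{align*}
\|\bv{U}_{h\dt} - \widetilde{\bv{U}}_{h\dt}\|_{L^2(0,\tf,\ltwods)}^2
= \sum_{k=1}^K \int_{t^{k-1}}^{t^k} \Big(\tfrac{t-t^k}{\dt}\Big)^{\!\! 2} \|\inc\bv{U}^k\|_{\ltwods}^2 \, dt
= \tfrac{\dt}{3} \sum_{k=1}^K \|\inc\bv{U}^k\|_{\ltwods}^2 \;\leq\; C\dt,
\end{align*}
where the last inequality uses $\sum_k \|\inc\bv{U}^k\|_{\ltwods}^2 \leq 2\dt^{-1}\dt^{-1}\|\mathscr{I}_{h,\dt}^\dt\|_{\ell^1}\cdot\dt \leq C$ from Proposition~\ref{discenerglemma2}; an identical bound holds for $\bv{W}_{h\dt}-\widetilde{\bv{W}}_{h\dt}$. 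Letting $h,\dt\to 0$ and combining with the strong convergence of the linear interpolants yields the claim. The only delicate point is the combination of a discretely divergence-free setting with a bound in the continuous dual space $\Vspace^*$ rather than in $\FEspaceUdivfree^*$; this is precisely what assumption (A3) and the $\hzerods$-stability of $\Pi_{\FEspaceUdivfree}$ (and analogously (A2) for $\Pi_{\FEspaceW}$) were designed to handle in the derivation of Lemma~\ref{derivestlemma}, so no additional work is required here.
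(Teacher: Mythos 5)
Your proposal is correct and follows essentially the same route as the paper: the piecewise linear (Rothe) reconstruction, the Aubin--Lions lemma applied with the bounds from Proposition~\ref{discenerglemma2} and Lemma~\ref{derivestlemma}, and the identity $\|\bv{U}_{h\dt}-\widetilde{\bv{U}}_{h\dt}\|_{L^2(0,\tf,\ltwods)}^2=\tfrac{\dt}{3}\sum_k\|\inc\bv{U}^k\|_{\ltwods}^2$ to transfer the strong convergence to the piecewise constant functions. No substantive differences from the paper's argument.
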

\begin{proof} Using the Aubin-Lions lemma (Lemma~\ref{aubinlemma}) we would like to conclude on the basis of the estimates provided in Proposition~\ref{discenerglemma2} and Lemma~\ref{derivestlemma}. However, this is not possible since the family of functions $\lb\bv{U}_{h\dt},\bv{W}_{h\dt}\rb_{h,\dt>0}$ is discontinuous in time --- time derivatives are not well defined. This a typical characteristic of discontinuous Galerkin methods for time integration such as the Backward Euler method. To overcome this, we define their Rothe interpolants, that is the piecewise linear and continuous auxiliary functions $\widehat{\bv{U}}_{h\dt}$ and $\widehat{\bv{W}}_{h\dt}$:
\begin{align*}
\widehat{\bv{U}}_{h\dt} = \ell_{k-1}(t) \bv{U}^{k-1} 
+ \ell_{k}(t) \bv{U}^k  \ , \ \
\widehat{\bv{W}}_{h\dt} = \ell_{k-1}(t) \bv{W}^{k-1} 
+ \ell_{k}(t) \bv{W}^k \ \ \forall \, t \in (t^{k-1},t^k] \, ,
\end{align*}
where $\ell_{k-1}(t) = (t^k - t)/\dt$ and $\ell_{k}(t) = (t - t^{k-1})/\dt$. Since
\begin{align*}
\partial_t \widehat{\bv{U}}_{h\dt}(t) = \dt^{-1} \inc \bv{U}^k, \quad \partial_t \widehat{\bv{W}}_{h\dt}(t) = \dt^{-1} \inc \bv{W}^k
\ \ \forall t \in (t^{k-1},t^k] \, ,
\end{align*}
we have that:
\begin{itemize}
\item[\itemizebullet] $\widehat{\bv{U}}_{h\dt}$ and $\widehat{\bv{W}}_{h\dt}$ converge strongly to some $\bv{u}^*$ and $\bv{w}^*$ in the $L^2(L^2)$ norm, i.e.
 \begin{align}\label{strongL2L2}
\|\widehat{\bv{U}}_{h\dt} - \bv{u}^*\|_{L^2(0,\tf,\ltwods)} + \|\widehat{\bv{W}}_{h\dt} - \bv{w}^*\|_{L^2(0,\tf,\ltwods)} \xrightarrow[]{h,\dt \rightarrow 0} 0  \, , 
\end{align} 
which is a consequence of Proposition~\ref{discenerglemma2}, the dual norm estimates for the time derivatives of Lemma~\ref{derivestlemma}, and a direct application of Lemma~\ref{aubinlemma}.

\item[\itemizebullet] The previous bullet implies that $\bv{U}_{h\dt}$ and $\bv{W}_{h\dt}$ also converge strongly to the same limits $\bv{u}^*$ and $\bv{w}^*$ in the $L^2(L^2)$ norm. For the velocity $\bv{U}_{h\dt}$ this is easy to show using the triangle inequality
\begin{align*}
\|\bv{U}_{h\dt} - \bv{u}^*\|_{L^2(0,\tf,\ltwods)} \leq 
\|\bv{U}_{h\dt} - \widehat{\bv{U}}_{h\dt}\|_{L^2(0,\tf,\ltwods)}
+ \|\widehat{\bv{U}}_{h\dt} - \bv{u}^*\|_{L^2(0,\tf,\ltwods)} \, ,
\end{align*}
where the term $\|\widehat{\bv{U}}_{h\dt} - \bv{u}^*\|_{L^2(0,\tf,\ltwods)}$ goes to zero because of \eqref{strongL2L2}, and 
$\|\bv{U}_{h\dt} - \widehat{\bv{U}}_{h\dt}\|_{L^2(0,\tf,\ltwods)}$ goes to zero because of the identity
\begin{align*}
\|\bv{U}_{h\dt} - \widehat{\bv{U}}_{h\dt}\|_{L^2(0,\tf,\ltwods)}^2 = 
\tfrac{\dt}{3} \sum_{k = 1}^{K} \|\inc\bv{U}^k\|_{\ltwods}^2
\end{align*}
and estimate \eqref{secstepdisc2}. For the angular velocity we can use the same argument to show that \\
$\|\bv{W}_{h\dt}- \bv{w}^*\|_{L^2(0,\tf,\ltwods)} \xrightarrow[]{h,\dt \rightarrow 0} 0$. 
\end{itemize}
This completes the proof.
\end{proof}
At this point we are in position to show the main convergence result.

\begin{theorem}[convergence]\label{mainconvlemma} The family of functions $\lb\bv{U}_{h\dt},\bv{W}_{h\dt},\bv{M}_{h\dt}\rb_{h,\dt>0}$, defined in \eqref{pwconst} has the following convergence properties
{\allowdisplaybreaks
\begin{align}\label{convmodesROS}
\begin{split}
&\bv{U}_{h\dt} \xrightarrow[]{h,\dt \rightarrow 0} \bv{u}^* \ \text{in }L^2(0,\tf,\ltwod) \, , \\
&\bv{U}_{h\dt} \xrightharpoonup[]{h,\dt \rightarrow 0 } \bv{u}^* \ \text{in }L^2(0,\tf,\hzerod) \, , \\
&\bv{W}_{h\dt} \xrightarrow[]{h,\dt \rightarrow 0} \bv{w}^* \ \text{in }L^2(0,\tf,\ltwod) \, , \\
&\bv{W}_{h\dt} \xrightharpoonup[]{h,\dt \rightarrow 0 } \bv{w}^* \ \text{in }L^2(0,\tf,\hzerod) \, , \\
&\bv{M}_{h\dt} \xrightharpoonup[]{h,\dt \rightarrow 0 } \bv{m}^* \ \text{in }L^2(0,\tf,\ltwod)  \, , 
\end{split}
\end{align}}
where $\{\bv{u}^*,\bv{w}^*,\bv{m}^*\} \in L^2(0,\tf,\Vspace) \times L^2(0,\tf,\hzerod) \times L^2(0,\tf,\ltwod)$ is a weak solution of  \eqref{weakformulation}. 
\end{theorem}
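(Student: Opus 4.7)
The convergences listed in \eqref{convmodesROS} are already established by Lemmas~\ref{weakconv} and \ref{strongL2lem}, so the substance of the proof is to identify the limits $(\bv{u}^*,\bv{w}^*,\bv{m}^*)$ with a weak solution of \eqref{weakformulation}. The plan is to fix a triple of smooth test functions $(\utest,\wtest,\mtest)$ for the ultra-weak formulation --- with $\diver{}\utest = 0$ in $\Omega$ and all three vanishing at $t=\tf$ --- construct piecewise-constant-in-time discrete surrogates $\Utest_{h\dt}\in\FEspaceUht$, $\Wtest_{h\dt}\in\FEspaceWht$, $\Mtest_{h\dt}\in\FEspaceMht$, substitute them into the reformulated scheme \eqref{reformulated}, and pass to the limit $h,\dt\to 0$.

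For the construction of the discrete test functions I would set $\Wtest_{h\dt}^k = \text{I}_{\FEspaceW}\wtest(t^k)$ and $\Mtest_{h\dt}^k = \text{I}_{\FEspaceM}\mtest(t^k)$, which are continuous across interelement faces by assumptions (A4)--(A5) together with \eqref{FEspaces}, so that the jump contributions in $\trilm$ drop out, and $\Utest_{h\dt}^k = \Pi_{\FEspaceUdivfree}\utest(t^k)$, so that the discrete velocity test function is discretely divergence-free and the pressure term in \eqref{Uconver} vanishes identically. The approximation properties \eqref{optestim} together with (A2)--(A3) and the smoothness of $\utest,\wtest,\mtest$ yield uniform convergence in $L^\infty(\honed)$, which is strong enough for all terms below; in particular the shifted difference quotients $(\Utest_{h\dt}(\cdot+\dt)-\Utest_{h\dt})/\dt$ converge strongly in $L^2(\ltwod)$ to $\partial_t\utest$, and analogously for the angular velocity and magnetization.

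Passing to the limit proceeds term by term. The linear elliptic, reactive, and Coriolis-type terms are pairings of weakly convergent sequences from Lemma~\ref{weakconv} against strongly convergent test functions, so they converge directly. The discrete time-derivative pairings of the form $(\bv{U}_{h\dt},(\Utest_{h\dt}(\cdot+\dt)-\Utest_{h\dt})/\dt)$ converge to $(\bv{u}^*,\partial_t\utest)$ by weak--strong pairing. The Kelvin force $\mu_0\trilm(\Utest_{h\dt},\bv{H}_{h\dt},\bv{M}_{h\dt})$, the torque $\mu_0(\bv{M}_{h\dt}\times\bv{H}_{h\dt},\Wtest_{h\dt})$, and the magnetic reaction $\tfrac{\permit}{\chartime}(\bv{H}_{h\dt},\Mtest_{h\dt})$ all involve $\bv{H}_{h\dt}=\text{I}_{\FEspaceM}\ha$, which is a pointwise approximation of a fixed smooth harmonic field and hence converges strongly in every reasonable norm; the face-jump contribution in $\trilm$ vanishes because $\lj\bv{H}_{h\dt}\rj\equiv 0$. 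The gyromagnetic term $(\bv{M}_{h\dt}\times\bv{W}_{h\dt},\Mtest_{h\dt})$ is handled by combining strong $L^2(\ltwod)$ convergence of $\bv{W}_{h\dt}$ from Lemma~\ref{strongL2lem} with weak convergence of $\bv{M}_{h\dt}$ and strong convergence of $\Mtest_{h\dt}$; likewise, for $\trilm(\bv{U}_{h\dt},\Mtest_{h\dt},\bv{M}_{h\dt})$ the product $(\bv{U}_{h\dt}\cdot\nabla)\Mtest_{h\dt}$ converges strongly in $L^2(\ltwod)$ so its pairing with weakly convergent $\bv{M}_{h\dt}$ passes to the limit, and again the face terms disappear by continuity of $\Mtest_{h\dt}$.

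The hardest step will be the Navier--Stokes-type convection $\tril_h(\bv{U}_{h\dt},\bv{U}_{h\dt},\Utest_{h\dt})$ and the analogous angular-velocity convection $\inertiamom\,\tril_h(\bv{U}_{h\dt},\bv{W}_{h\dt},\Wtest_{h\dt})$, since there both non-test factors are only weakly convergent in $L^2(\honed)$. The remedy is to interpolate between the strong $L^2(\ltwod)$ convergence of $\bv{U}_{h\dt}$ provided by Lemma~\ref{strongL2lem} and the uniform bound in $L^2(\bv{L}^6)$ coming from $\hzerod\hookrightarrow\bv{L}^6$ together with the stability estimate \eqref{secstepdisc2}, which yields strong convergence of $\bv{U}_{h\dt}$ in $L^2(\bv{L}^p)$ for every $p<6$ and suffices for the bulk trilinear terms in their skew-symmetric form. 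Finally, the divergence-free constraint $\diver{}\bv{u}^* = 0$ is recovered by passing to the limit in \eqref{SNS2DG} using density of $\FEspaceP$ in $\ltwo$, and the strong convergence of $\bv{U}^0,\bv{W}^0,\bv{M}^0$ to the prescribed initial data follows from \eqref{initROSens} and \eqref{optestim}. Together these verifications establish that $(\bv{u}^*,\bv{w}^*,\bv{m}^*)$ satisfies \eqref{SimpNS}--\eqref{SimpMag}, completing the proof.
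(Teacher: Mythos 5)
Your overall strategy coincides with the paper's: discrete test functions built from interpolants of smooth test functions (the paper uses the Stokes projection $\SPvel$ for the velocity where you use $\Pi_{\FEspaceUdivfree}$, an immaterial difference since both kill the pressure term and both converge strongly in $\honed$), followed by term-by-term passage to the limit using weak--strong pairings, with the nonlinear Navier--Stokes-type terms handled by interpolating the strong $L^2(\ltwod)$ compactness of $\bv{U}_{h\dt},\bv{W}_{h\dt}$ against the uniform $L^2(\bv{L}^6)$ bound. However, there is one genuine gap, and it sits exactly at the step the paper's assumptions (A4)--(A5) were designed for.

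The form $\trilm$ defined in \eqref{trilineardef} contains the Temam stabilization $\tfrac12\diver{U}\,\bv{V}\cdot\bv{W}$ in addition to the bulk convection and the face jumps. In the magnetization equation the relevant term is $\trilm(\bv{U}_{h\dt},\Mtest_{h\dt},\bv{M}_{h\dt})$, whose stabilization part is
\begin{align*}
\tfrac12\int_0^{\tf}\sum_{\element\in\triangulation}\int_\element \diver{}\bv{U}_{h\dt}\;\Mtest_{h\dt}\cdot\bv{M}_{h\dt}\,.
\end{align*}
You address only the bulk term $(\bv{U}_{h\dt}\cdot\nabla)\Mtest_{h\dt}\cdot\bv{M}_{h\dt}$ and the face jumps. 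For the stabilization term your weak--strong pairing argument does not apply: the first argument here is the \emph{discrete solution} $\bv{U}_{h\dt}$, not a test function, so $\diver{}\bv{U}_{h\dt}$ converges only \emph{weakly} to $\diver{}\bv{u}^*=0$ (it is merely discretely divergence-free, and strong $L^2(\ltwod)$ convergence of $\bv{U}_{h\dt}$ gives no strong convergence of its divergence), while $\bv{M}_{h\dt}$ also converges only weakly in $L^2(\ltwod)$ and has no spatial compactness whatsoever (there is no gradient bound on the magnetization when $\magdiff=0$). A product of two weakly convergent factors cannot be passed to the limit. The paper resolves this in \eqref{temamconsistency} by exploiting the local orthogonality \eqref{localizorth2}, which follows from $\FEspaceM=[\FEspaceP]^d$ with a \emph{discontinuous} pressure space: one may replace $\Mtest_{h\dt}$ by $\Mtest_{h\dt}-\langle\Mtest_{h\dt}\rangle_\element$ at no cost, because $\int_\element\diver{}\bv{U}_{h\dt}\,\bv{M}_{h\dt}^i=0$, and the deviation from the elementwise mean contributes a factor $h\|\nabla\mtest\|_{\linfs}$ which sends the whole term to zero. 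Without this (or an equivalent consistency mechanism) the identification of $\bv{m}^*$ as a solution of \eqref{SimpMag} fails. The same device is what the paper uses for the Kelvin force in \eqref{PassLimitKelvin}, although there your argument survives because the divergence in question is that of the test function and does converge strongly to zero.
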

\begin{proof} The modes of convergence (weak or strong and their norm) in \eqref{convmodesROS} are a consequence of Lemmas \ref{weakconv} and \ref{strongL2lem}. It only remains to show that weak limits $\bv{u}^*$, $\bv{w}^*$ and $\bv{m}^*$ are solutions of the variational problem  \eqref{weakformulation}. For this purpose we will set $\lb\Utest_{h\dt},\Wtest_{h\dt},\Mtest_{h\dt}\rb$ to be the space-time interpolants/projections of the smooth test functions $\lb \utest, \wtest, \mtest\rb$ of the variational formulation  \eqref{weakformulation}:
\begin{align}\label{disctest}
\Utest_{h\dt} := \SPvel[\utest^k] \, , \ \
\Wtest_{h\dt} := \text{I}_{\FEspaceW}[\wtest^k] \, , \ \
\Mtest_{h\dt} := \text{I}_{\FEspaceM}[\mtest^k] \ \
\ \ \ \forall \, t \in (t^{k-1},t^k] \, . 
\end{align}
With this definition of discrete test functions we get in \eqref{reformulated}:
\begin{subequations}
\label{reformeval}
\begin{align}
\begin{split}\label{SNS1DG2}
&- \int_{0}^{\tf-\dt} \alp \bv{U}_{h\dt},\tfrac{\Utest_{h\dt}(\cdot+\dt) - \Utest_{h\dt}}{\dt} \arp 
+ \tril_h\lp\bv{U}_{h\dt},\bv{U}_{h\dt},\Utest_{h\dt}\rp \\
&\ \ \ \ \ \ \ \ \ + \int_{0}^{\tf} \nunot \lp \gradv{U}_{h\dt},\gradv{}\Utest_{h\dt} \rp 
= (\bv{U}_{h\dt}(0),\Utest_{h\dt}(0)) \\
&\ \ \ \ \ \ \ \ \ + \int_{0}^{\tf} 2 \nu_r (\curl{}\bv{W}_{h\dt},\Utest_{h\dt}) 
+ \mu_0 \trilm\lp \Utest_{h\dt}, \bv{H}_{h\dt},\bv{M}_{h\dt}\rp \, , 
\end{split} \\
\begin{split}\label{SspinDG2}
&- \int_{0}^{\tf-\dt} \inertiamom \alp \bv{W}_{h\dt},\tfrac{\Wtest_{h\dt}(\cdot+\dt) - \Wtest_{h\dt}}{\dt} \arp 
+ \int_{0}^{\tf} \inertiamom \tril_h \lp \bv{U}_{h\dt},\bv{W}_{h\dt},\Wtest_{h\dt}\rp \\
&\ \ \ \ \ \ \ \ \ + c_1 (\gradv{}\bv{W}_{h\dt} ,\gradv{}\Wtest_{h\dt}) 
+ c_2 \lp\diver{}\bv{W}_{h\dt},\diver{}\Wtest_{h\dt}\rp 
+ 4 \nu_r (\bv{W}_{h\dt},\Wtest_{h\dt}) \\
&\ \ \ \ \ \ \ \ \ = \inertiamom (\bv{W}_{h\dt}(0),\Wtest_{h\dt}(0) ) 
+ \int_{0}^{\tf}  2 \nu_r \lp \curl{U}_{h\dt}, \Wtest_{h\dt} \rp \\
&\ \ \ \ \ \ \ \ \ + \mu_0 (\bv{M}_{h\dt} \times \bv{H}_{h\dt},\Wtest_{h\dt}) \, , 
\end{split} \\
\begin{split}\label{SMag1DG2}
&- \int_{0}^{\tf-\dt} \alp \bv{M}_{h\dt},\tfrac{\Mtest_{h\dt}(\cdot+\dt) - \Mtest_{h\dt}}{\dt} \arp 
- \int_{0}^{\tf} \trilm\lp \bv{U}_{h\dt},\Mtest_{h\dt},\bv{M}_{h\dt}\rp \\
&\ \ \ \ \ \ \ \ \ + (\bv{M}_{h\dt} \times \bv{W}_{h\dt},\Mtest_{h\dt}) + \tfrac{1}{\chartime} \lp\bv{M}_{h\dt},\Mtest_{h\dt}\rp \\
&\ \ \ \ \ \ \ \ \ = (\bv{M}_{h\dt}(0),\Mtest_{h\dt}(0) )
+ \tfrac{\permit}{\chartime} \int_{0}^{\tf}  \lp \bv{H}_{h\dt}, \Mtest_{h\dt} \rp \, ,
\end{split} 
\end{align}
\end{subequations}
where the terms evaluated at time $t = \tf$ have disappeared because of the compact support of the test functions $\lb \utest, \wtest, \mtest\rb$. Note also that the pressure term of the Navier Stokes equation has vanished too, which is a consequence of the definition of the discrete test function $\Utest_{h\dt} := \SPvel[\utest^k] \ \ \forall \, t \in (t^{k-1},t^k]$, involving the Stokes projector $\SPvel$; see \eqref{stokesproj} and  \eqref{disctest}. Now we will pass to the limit term by term in \eqref{reformeval}: 
\begin{itemize}
\item[\itemizebullet] We start with the terms with the time derivatives, which are straightforward:
\begin{align*}
- \int_{0}^{\tf-\dt} \alp \bv{U}_{h\dt},\tfrac{\Utest_{h\dt}(\cdot+\dt) - \Utest_{h\dt}}{\dt} \arp 
\ \ \ &\xrightarrow[]{h,\dt \rightarrow 0} \ \ \ - \int_{0}^{\tf} ( \bv{u}^*,\utest_t ) \, , \\
- \int_{0}^{\tf-\dt} \alp \bv{W}_{h\dt},\tfrac{\Wtest_{h\dt}(\cdot+\dt) - \Wtest_{h\dt}}{\dt} \arp 
\ \ \ &\xrightarrow[]{h,\dt \rightarrow 0} \ \ \ - \int_{0}^{\tf} ( \bv{w}^*,\wtest_t ) \, , \\
- \int_{0}^{\tf-\dt} \alp \bv{M}_{h\dt},\tfrac{\Mtest_{h\dt}(\cdot+\dt) - \Mtest_{h\dt}}{\dt} \arp  
\ \ \ &\xrightarrow[]{h,\dt \rightarrow 0} \ \ \ - \int_{0}^{\tf} (\bv{m}^*,\mtest_t) \, , 
\end{align*}
because of the strong $L^2(L^2)$ convergence of $\bv{U}_{h\dt}$, $\bv{W}_{h\dt}$, the weak $L^2(L^2)$ convergence of $\bv{M}_{h\dt}$, and the strong convergence of the finite differences $\tfrac{\Utest_{h\dt}(\cdot+\dt) - \Utest_{h\dt}}{\dt}$, $\tfrac{\Wtest_{h\dt}(\cdot+\dt) - \Wtest_{h\dt}}{\dt}$ and $\tfrac{\Mtest_{h\dt}(\cdot+\dt) - \Mtest_{h\dt}}{\dt}$, guaranteed by the regularity of the test functions.

\item[\itemizebullet] We continue with the convective term of \eqref{SMag1DG2}. Using definition \eqref{trilineardef} we get
\begin{align}
\label{trilimlimit}
\begin{aligned}
\int_{0}^{\tf} \trilm\lp \bv{U}_{h\dt},\bv{M}_{h\dt},\Mtest_{h\dt}\rp &=
- \int_{0}^{\tf} \trilm\lp \bv{U}_{h\dt},\Mtest_{h\dt},\bv{M}_{h\dt}\rp \\
&= - \int_{0}^{\tf} \sum_{\element \in \triangulation} \int_\element \big( (\bv{U}_{h\dt} \cdot \nabla )\bv{Z}_{h\dt} \cdot \bv{M}_{h\dt} + \tfrac{1}{2} \diver{}\bv{U}_{h\dt} \, \bv{Z}_{h\dt} \cdot \bv{M}_{ht} \big) \, .
\end{aligned}
\end{align} 
Note that the consistency terms with the jumps have disappeared since $\lj\bv{Z}_{h\dt}\rj\big|_{F} = 0$ for all $F\in \mathcal{F}^i$, which is a consequence of definitions \eqref{interpolants} and \eqref{disctest} and assumptions (A4) and (A5). Passage to the limit in the first part of \eqref{trilimlimit}, that is
\begin{align*}
- \int_{0}^{\tf} \sum_{\element \in \triangulation} \int_\element (\bv{U}_{h\dt} \cdot \nabla )\bv{Z}_{h\dt} \cdot \bv{M}_{h\dt} \ \ \ \xrightarrow[]{h,\dt \rightarrow 0} \ \ \ - \int_{0}^{\tf} \tril(\bv{u}^*,\bv{z},\bv{m}^*) \, , 
\end{align*}
is carried out using the strong $L^2(L^2)$ convergence of $\bv{U}_{h\dt}$, the weak $L^2(L^2)$ convergence of $\bv{M}_{h\dt}$ and the strong convergence of $\nabla\bv{Z}_{h\dt}$ guaranteed by \eqref{optestim}. By consistency, we need the second part of \eqref{trilimlimit} to vanish when $h,\dt \rightarrow 0$:
\begin{align}
\label{temamconsistency}
\begin{split}
& - \int_{0}^{\tf} \sum_{\element \in \triangulation} \int_\element \tfrac{1}{2} \diver{}\bv{U}_{h\dt} \, \bv{Z}_{h\dt} \cdot \bv{M}_{h\dt} = - \int_{0}^{\tf} \sum_{\element \in \triangulation} \int_\element \tfrac{1}{2} \diver{}\bv{U}_{h\dt} \, (\bv{Z}_{h\dt} - \langle\bv{Z}_{h\dt} \rangle_{\element} )\cdot \bv{M}_{h\dt} \rp \\
&\ \ \ \ \ \lesssim \|\nabla\bv{U}_{h\dt}\|_{L^2(0,\tf,\ltwods)} \ h \|\nabla\Mtest\|_{L^\infty(0,\tf,\linfds)} \ \|\bv{M}_{h\dt}\|_{L^2(0,\tf,\ltwods)}
\  \xrightarrow[]{h,\dt \rightarrow 0} \  0 \, , 
\end{split}
\end{align}
where $\langle\bv{Z}_{h\dt} \rangle_{\element} = \tfrac{1}{|\element|}\int_{\element} \bv{Z}_{h\dt}$. Estimate \eqref{temamconsistency} employs the local orthogonality property \eqref{localizorth2}, which is a consequence of assumptions (A4) and (A5). In \eqref{temamconsistency} we also used the uniform bounds on $\bv{U}_{h\dt}$ and $\bv{M}_{h\dt}$, and the regularity of the test function $\mtest$. 

Passage to the limit of the remaining convective terms (those in \eqref{SNS1DG2} and \eqref{SspinDG2}) follow standard procedures and their treatment can be found in other works such as \cite{Temam,MarTem}.

\item[\itemizebullet] For the Kelvin force in \eqref{SMag1DG2}, recalling again definition \eqref{trilineardef} we get
\begin{align}
\begin{aligned}\label{PassLimitKelvin}
&\int_{0}^{\tf} \trilm\lp \Utest_{h\dt}, \bv{H}_{h\dt},\bv{M}_{h\dt}\rp = \int_{0}^{\tf} \sum_{\element \in \triangulation} \int_\element \lp (\Utest_{h\dt} \cdot \nabla )\bv{H}_{h\dt} \cdot \bv{M}_{h\dt}
+ \tfrac{1}{2} \diver{}\Utest_{h\dt} \, \bv{H}_{h\dt} \cdot \bv{M}_{h\dt} \rp \\
&\ \ \ \ \ \ \xrightarrow[]{h,\dt \rightarrow 0}
\int_{0}^{\tf} \int_{\Omega} (\utest \cdot \nabla)\bv{h} \,  \bv{m}^* = \int_{0}^{\tf} \int_{\Omega} (\bv{m}^* \cdot \nabla)\bv{h} \,  \utest \, , 
\end{aligned}
\end{align}
which follows from analogous arguments to those used in \eqref{temamconsistency}. To show that the term $\frac{1}{2} \diver{}\Utest_{h\dt} \, \bv{H}_{h\dt} \cdot \bv{M}_{h\dt}$ vanishes in the limit we resort to property \eqref{localizorth2}, this time by adding a term of the form $\langle\bv{H}_{h\dt} \rangle_{\element}\cdot\bv{M}_{h\dt}$. Finally the weak $L^2(L^2)$ convergence of $\bv{M}_{h\dt}$, the strong convergence properties of $\bv{H}_{h\dt}$ and the test function $\Utest_{h\dt}$, and the fact that $\nabla\heff = \nabla\heff^\text{T}$ since $\curl\heff = 0$, are all what we need in the passage to the limit.
\item[\itemizebullet] For the magnetic torque in \eqref{SspinDG2} we have that
\begin{align*}
\int_{0}^{\tf} \mu_0 (\bv{M}_{h\dt} \times \bv{H}_{h\dt},\Wtest_{h\dt})
\xrightarrow[]{h,\dt \rightarrow 0} \ \ \ \int_{0}^{\tf} \mu_0 (\bv{m}^* \times \bv{h},\wtest) \, , 
\end{align*}
which follows by the weak $L^2(L^2)$ convergence of $\bv{M}_{h\dt}$ and the strong convergence of $\bv{H}_{h\dt}$ and $\Wtest_{h\dt}$. 
\end{itemize}
Since the remaining terms in \eqref{reformeval} are linear they require little or no explanation in their passage to the limit. The proof is thus complete.
\end{proof}

\subsection{Finite element spaces}
\label{choicespacessec}
Now we provide a couple of finite element spaces which satisfy the key assumptions required in the proof of convergence of scheme \eqref{converscheme}.

In space dimension two, the spaces $\FEspaceW$ and $\FEspaceM$ can be the same as those defined in \eqref{choice2}--\eqref{FEspaces} with $\ell = 2$, while the pair $(\FEspaceU,\FEspaceP)$ can be chosen as
\begin{align}
\label{CrouRavPair}
\begin{split}
\FEspaceU &= \lb \bv{U} \in \bm{\mathcal{C}}^0\bigl(\overline\Omega\bigr) \, \big|
\, \bv{U}|_{T} \in [\simplex_{2}(\element) \oplus \text{Span} \, \mathcal{B}(\element)]^{2} \, , \forall \, \element \in \triangulation \rb \cap \hzerod \, , \\
\FEspaceP &= \lb \Ptest \in L^2\bigl(\Omega\bigr) \ |
\ \Ptest|_{T} \in \simplex_{{1}}(\element) \, , \forall \, \element \in \triangulation \rb \, ,  
\end{split}
\end{align}
where $\mathcal{B}(\element) = \prod_{i =1}^{3} \lambda_i$ is the cubic bubble function, and $\lb \lambda_i \rb_{i=1}^{3}$ are the barycentric coordinates of $\element$. This finite element pair is known as enriched Taylor-Hood and it is well known to be LBB stable (\cf~\cite{ErnGuermond,Girault}) in two dimensions. With this choice of finite element space we will naturally have that $\FEspaceM = [\FEspaceP]^2$, and that $\FEspaceM \cap \boldsymbol{\mathcal{C}}^0(\overline\Omega)$ will be non-empty.

In space dimension three the reader could consider using again the finite element spaces $\FEspaceW$ and $\FEspaceM$ (with $\ell = 2$) defined in \eqref{choice2}--\eqref{FEspaces} and the second-order Bernardi-Raugel element (see \cite[p. 148]{Girault}) which uses $\simplex_{1}$ discontinuous elements for the pressure space.

If a higher polynomial degree $\polydegree>2$ is desired, suitable pairs $(\FEspaceU,\FEspaceP)$ are described in \cite[Remark 8.6.2]{Boff2013} and \cite[\S8.72]{Boff2013} for two and three dimensions, respectively.

Let us, finally, point out that a careful examination of the proof of Theorem~\ref{mainconvlemma} reveals that, in conjunction, assumptions (A4) and (A5) are only used to show the consistency of the discrete trilinear forms through the orthogonality property \eqref{localizorth2}; see \eqref{temamconsistency} and \eqref{PassLimitKelvin}. Therefore, they can be replaced by:
\begin{enumerate}
  \item[(A4')] The pair $(\FEspaceU,\FEspaceP)$ yields pointwise divergence free velocities.
  \item[(A5')] The magnetization space $\FEspaceM$ is discontinuous and contains a continuous subspace of degree 1 or higher.
\end{enumerate}
With additional, and somewhat restrictive, mesh conditions spaces that verify (A4') are presented in \cite{FalkNeiland} and \cite{Neilan3d} for two and three dimensions, respectively.

\section{Numerical validation}
\label{validation}
Let us explore computationally the convergence properties of the scheme \eqref{firstschemeROS} using manufactured solutions. The implementation has been carried out with the help of the \texttt{deal.II} library; see \cite{BHK2007,DealIIReference}. Since we must guarantee the inclusion $\nabla\FEspacePhi \subset \FEspaceM$ and this library uses quadrilateral/hexahedrons, the finite element spaces defined in \eqref{choice1}-\eqref{choice2}, which are based on simplices, cannot be used. In addition, we cannot use the same polynomial degrees used in \eqref{choice1}-\eqref{choice2}, since the inclusion $\nabla\quadrilateral_{\polydegree}(\element) \subset [\quadrilateral_{\polydegree-1}(\element)]^d$ is not true. For this reason we set 
\begin{align}
\label{quadchoice}
\begin{aligned}
\FEspacePhi &= \lb \Phitest \in \mathcal{C}^0\bigl(\overline\Omega\bigr) \ | \ \Phitest|_{T} \in \quadrilateral_\polydegree(\element) \, , \forall \element \in \triangulation \rb  \subset \hone \, ,  \\
\FEspaceM &= \lb \bv{M} \in \ltwods(\Omega) \ | \ \bv{M}|_{T} \in [\quadrilateral_\polydegree(\element)]^d \, , \forall \, \element \in \triangulation \rb \, ,
\end{aligned}
\end{align}
and since $\nabla\quadrilateral_{\polydegree}(\element) \subset [\quadrilateral_{\polydegree}(\element)]^d$ holds true \eqref{quadchoice} is a valid pair. Even without a proper a priori error analysis, it is easy to realize that this choice is suboptimal in terms of approximation. Since $\polydegree = 2$, the best we can expect
\begin{align*}
\|\bv{U}^\dt - \bv{u}^\dt\|_{\ell^{\infty}(\ltwods)}
&+\|\bv{W}^\dt - \bv{w}^\dt\|_{\ell^{\infty}(\ltwods)} 
+\|\bv{M}^\dt - \bv{m}^\dt\|_{\ell^{\infty}(\ltwods)} 
+\|\nabla\hdpoth^\dt - \nabla\hdpot^\dt\|_{\ell^{\infty}(\ltwods)} \lesssim \dt + h^{2} \, , 
\end{align*}
which is suboptimal by a power of $h$ for the linear velocity $\bv{U}$, angular velocities $\bv{W}$ and magnetization $\bv{M}$. The reason for this loss of accuracy is the term $\|\nabla\hdpoth^\dt - \nabla\hdpot^\dt\|_{\ell^{\infty}(\ltwods)}$.

The arising linear systems have been solved with the direct solver \texttt{UMFPACK}$^\copyright$ for validation purposes. The nonlinear system is solved using a fixed point iteration. Let $\Omega = (0,1)^2 \subset \mathbb{R}^2$ and
\begin{align*}
\bv{u}(x,y,t) &= \sin(t) \big(  \sin(\pi x) \sin(\pi(y + 0.5)), \cos(\pi x) \cos(\pi (y + 0.5))  \big)^\intercal \, , \\
p(x,y,t) &= \sin(2 \pi (x - y) + t) \, , \\
\bv{w}(x,y,t) &= \sin(2 \pi x + t) \, \sin(2 \pi y + t) \, , \\
\bv{m}(x,y,t) &= \big( \sin(2 \pi x + t) \cos(2 \pi y + t) , \cos(2 \pi x + t)  \sin(2 \pi y + t) \big)^\intercal \, , \\
\hdpot(x,y,t) &= \sin(\pi x + t) \sin(\pi y + t).
\end{align*}
The right-hand sides are computed accordingly. To verify the convergence rates we set $\dt = h^2$ and consider a sequence of meshes with $h = 2^{-i}$ for $2 \leq i \leq 7$. Figure~\ref{FigErrors} shows the experimental errors, thereby confirming the predicted convergence rates.

\begin{figure}
\begin{center}
  \includegraphics[scale=0.6]{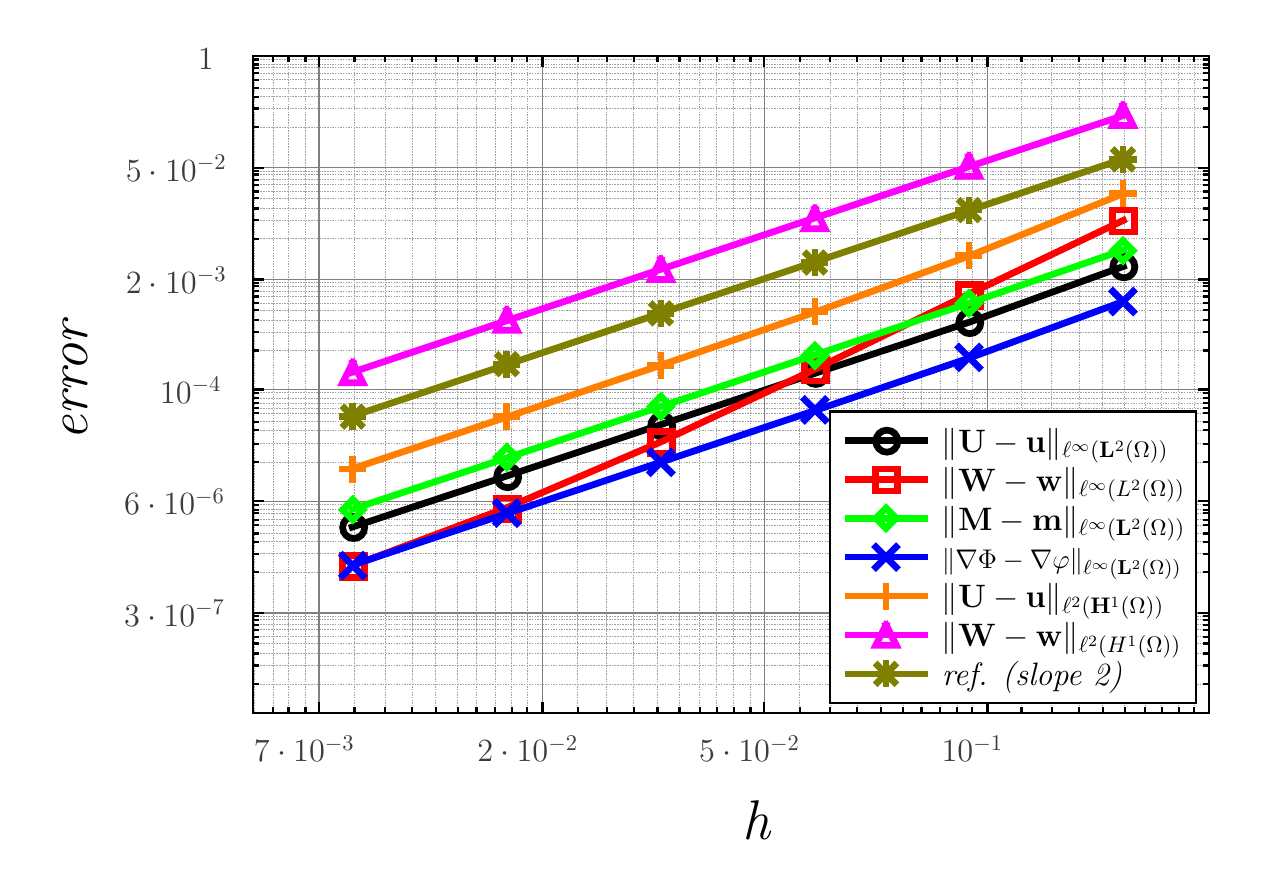}
\end{center}
\caption[Experimental order of convergence for the Rosensweig's model (log-log scale)]{\textbf{Experimental order of convergence for the Rosensweig's model (log-log scale).} Errors for the linear velocity $\bv{u}$, angular velocity $\bv{w}$, magnetization $\bv{m}$ and magnetic potential $\hdpot$. Note that the convergence rates in the $\ell^{\infty}(\ltwods)$ norms are suboptimal as expected, because of the choice of finite element spaces \eqref{quadchoice}. Note that the angular velocity (square markers) initially converges much faster than it should, and shows the asymptotic rate in the last two points.}
\label{FigErrors}
\end{figure}

\section{Numerical experiments with point dipoles}
\label{sec:numerics}
Let us now explore model \eqref{eq:ferroeq} and scheme \eqref{firstschemeROS} with a series of more realistic examples. In all these experiments we will consider the gradient of the potential of a point dipole $\nabla\hapot_s$ as a prototype for an applied magnetizing field, where
\begin{align}\label{dipole}
\hapot_s(\bv{x}) = \frac{\dipdir\cdot (\bv{x}_s - \bv{x})}{|\bv{x}_s- \bv{x}|^3} \, , 
\end{align}
$|\dipdir| = 1 $ indicates the direction of the dipole, and $\bv{x}_s = (x_s,y_s,z_s) \in \mathbb{R}^3$ is its location. It is not difficult to verify that $\curl{}\nabla\hapot_s=0$ and $\diver{}\nabla\hapot_s = \Delta\hapot_s = 0$ for every $\bv{x} \neq \bv{x}_s$, so that $\nabla\hapot_s$ defines a harmonic vector field. This is a physical requirement in the context of non-conducting media: the magnetic field $\ha$ must satisfy the equations of magnetostatics (see Remark \ref{remHarmonic}). \\
\begin{figure}
\begin{center}
  \setlength\fboxsep{0pt}
  \setlength\fboxrule{1pt}
  \fbox{\includegraphics[scale=0.2]{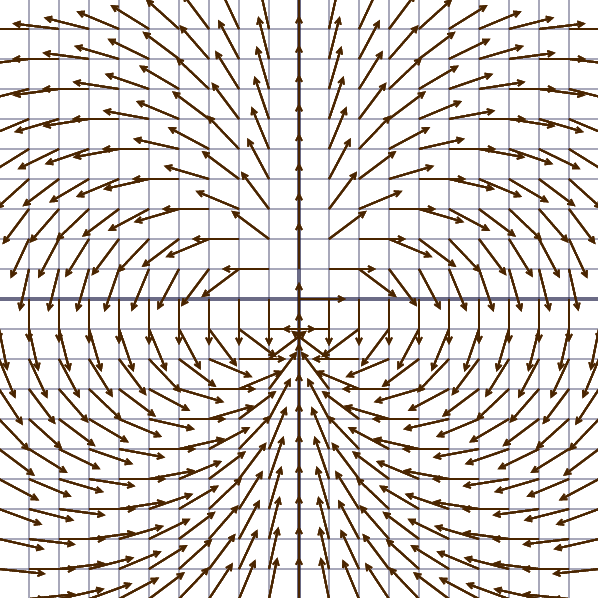}}
\end{center}
\caption[Plot the magnetic field due to a point dipole in 2d]{Normalized plot of the two dimensional harmonic field $\nabla\hapot_s$, with $\hapot_s$ as in \eqref{dipole2D}. Here $\bv{x}_s = (0,0)$ and $\dipdir = (0,1)$.\label{figura2}}
\end{figure}
Formula \eqref{dipole}, however, is intrinsically three dimensional \cite{Jack1998}. For this reason, we consider an alternative definition which leads to a two dimensional harmonic vector field:
\begin{align}\label{dipole2D}
\hapot_s(\bv{x}) = \frac{\dipdir\cdot (\bv{x}_s - \bv{x})}{|\bv{x}_s- \bv{x}|^2} \, ,
\end{align}
where now $\bv{x} , \bv{x}_s, \bv{d} \in \mathbb{R}^2$. Figure~\ref{figura2} shows $\nabla\hapot_s/|\nabla\hapot_s|$ for $\dipdir = (0,1)$ and $\bv{x}_s = (0,0)$. In our numerical experiments we will use linear combinations of dipoles as applied magnetizing field $\ha$
\begin{align}
\label{haformula}
\ha = \sum_{s} \alpha_s \nabla\hapot_s.
\end{align}

\subsection{Experiment 1: Spinning magnet}
\label{sub:spinmagnet}

We consider $\Omega = (0,1)^2 \subset \mathbb{R}^2$ filled with ferrofluid, $t\in [0,4]$, and the applied magnetic field of dipole $\ha = \alpha_s \nabla\hapot_s$, represented in Figure~\ref{figura3} as a rectangular permanent magnet. The application of the magnetic field will obey the following sequence:
\begin{itemize}
 \item[\itemizebullet] The initial position and orientation of the dipole are $\bv{x}_s = (0.5, -0.4)$ and $\bv{d} = (0,1)$ respectively.
 
 \item[\itemizebullet] For $t \in [0,1)$ we linearly increase the intensity $\alpha_s$ from $\alpha_s = 0$ to $\alpha_s = 10$ without changing $\dipdir$.
 
 \item[\itemizebullet] We let the fluid rest for $t \in [1,2)$.
 
 \item[\itemizebullet] For $t\in [2,4]$ we make the dipole go through a circular path around the center of the box $(0.5,0.5)$, with $\bv{d}$ pointing to the center of the box at every time as depicted in Figure~\ref{figura3}.
\end{itemize}

All the constitutive parameters of the model ($\nu$, $\nu_r$, $\mu_0$, $\inertiamom$, $c_a$, $c_d$, $c_0$ and $\permit$) were set to one with the exception of $\chartime$ which was taken to be $10^{-4}\text{ s}$ in order to achieve an almost instantaneous alignment of the magnetization with the magnetic field. The discretization uses a uniform mesh with $100$ elements in each space direction and $400$ time steps. The main goal of this experiment, and the displayed graphical results, is to help the reader form some intuition about the coupling between the linear velocity $\bv{u}$, angular velocity $\bv{w}$, and the magnetization $\bv{m}$. For this reason reference/scales have been omitted. Figures~\ref{figura4}--\ref{figura6} show some graphical results.

\begin{figure}
\begin{center}
  \includegraphics[scale=0.25]{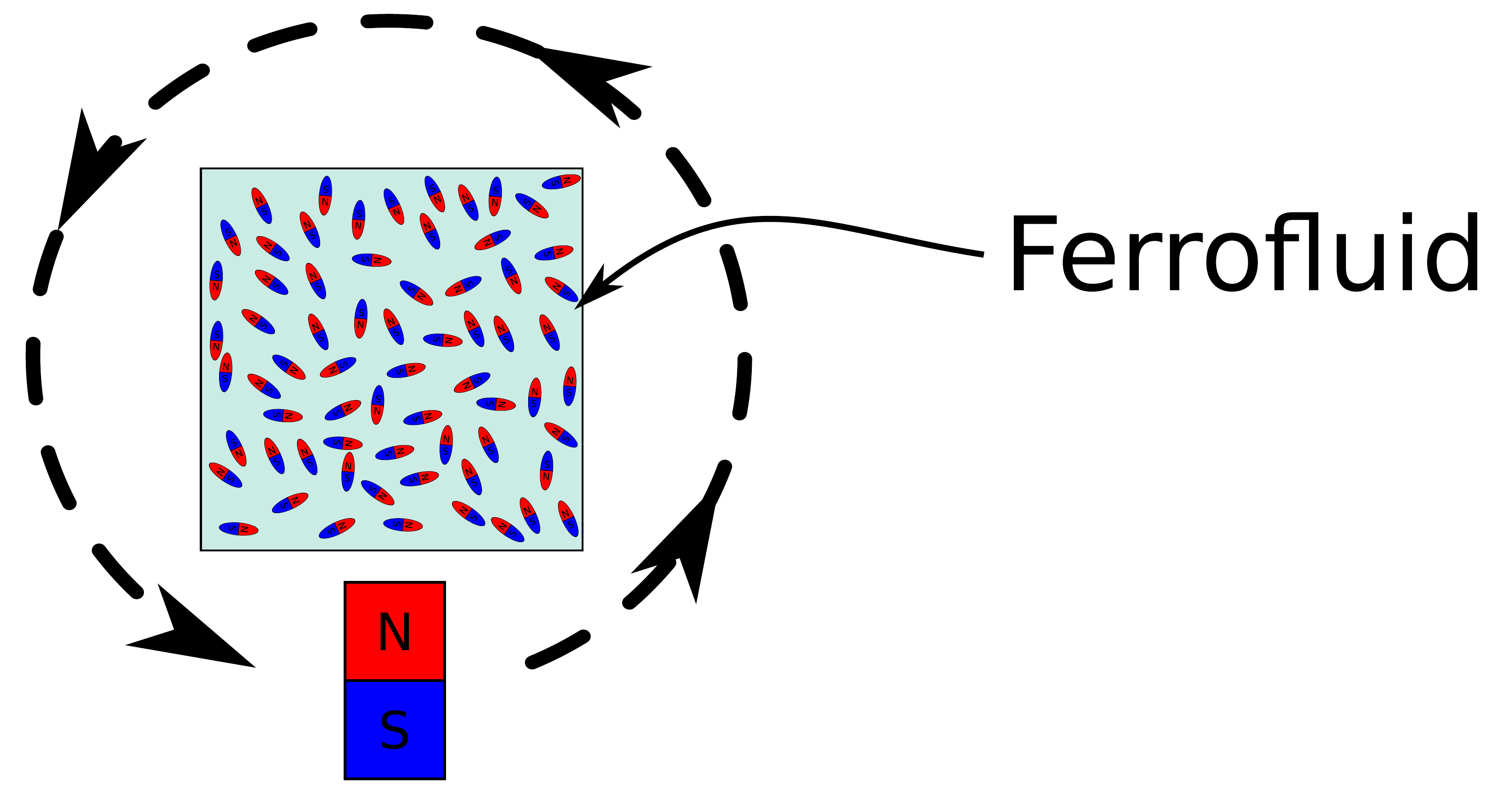}
\end{center}
\caption[Spinning magnet experiment: setup.]{\textbf{Spinning magnet experiment: setup.} Scheme of the experiment of \S\ref{sub:spinmagnet} (spinning magnet). We make the dipole go through a circular path around the center of the box $(0.5,0.5)$, with $\bv{d}$ pointing to the center of the box at every time. \label{figura3}}
\end{figure}


\begin{figure}
\begin{center}
\stackunder[5pt]{\includegraphics[width=42mm]{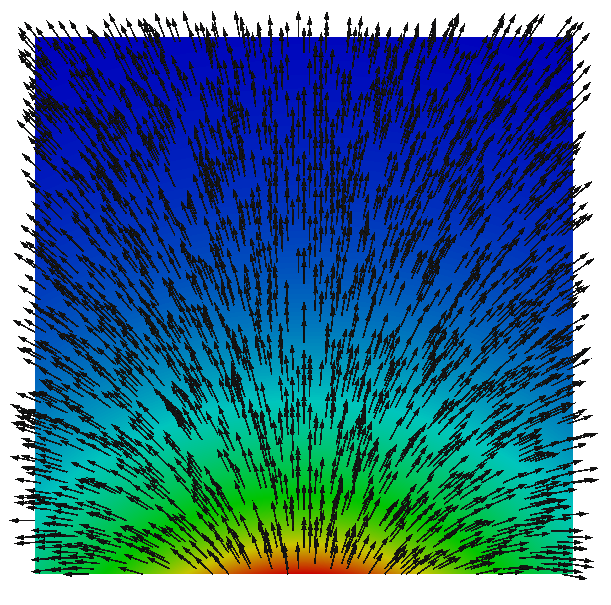}}{$t=2$s}%
\hspace{1pt}%
\stackunder[5pt]{\includegraphics[width=42mm]{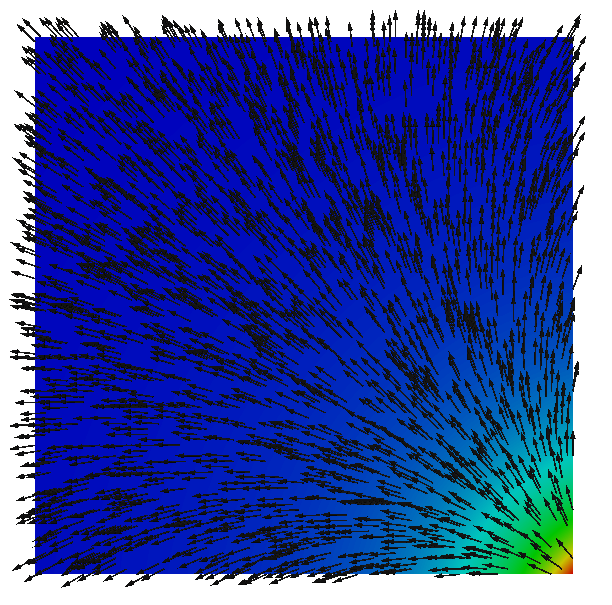}}{$t=2.25$s} 
\stackunder[5pt]{\includegraphics[width=42mm]{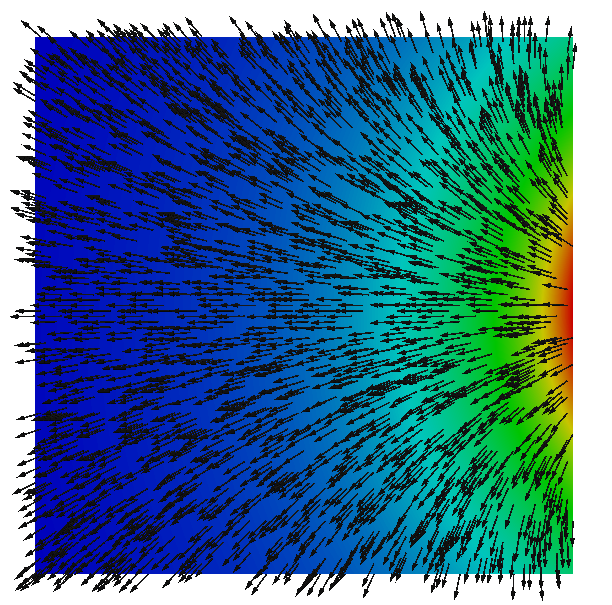}}{$t=2.5$s}%
\hspace{1pt}%
\stackunder[5pt]{\includegraphics[width=42mm]{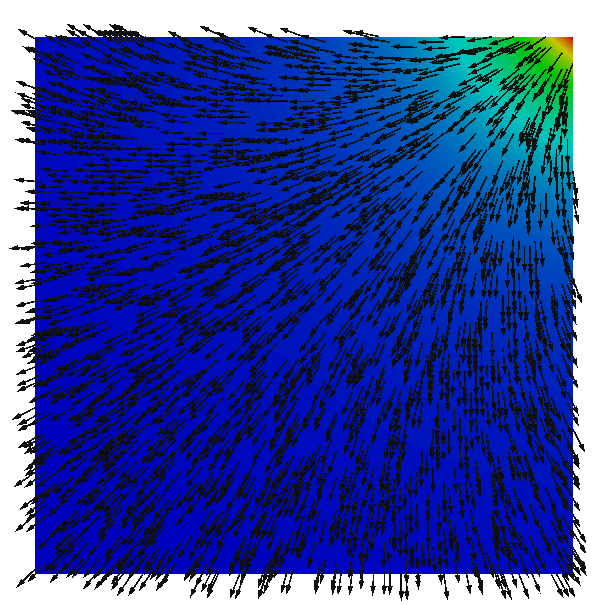}}{$t=2.75$s}%
\end{center}
\caption[Spinning magnet experiment: magnetization field]{\textbf{Spinning magnet experiment: magnetization field.}  Magnetization at times $t=2$s, $t=2.25$s, $t=2.5$s and $t=2.75$s for the experiment described in \S\ref{sub:spinmagnet}. The magnetization vectors are normalized for visualization, scale has been omitted for brevity. At time $t=2$ the dipole is at the bottom of the box, at time $t=2.25$ the dipole is pointing in the $(-1,1)$ direction, at time $t=2.5$ the dipole is pointing to $(-1,0)$, and at $t=2.75$ the dipole is pointing to $(-1,-1)$.\label{figura4}}
\end{figure}


\begin{figure}
\begin{center}
%
\stackunder[5pt]{\includegraphics[width=42mm]{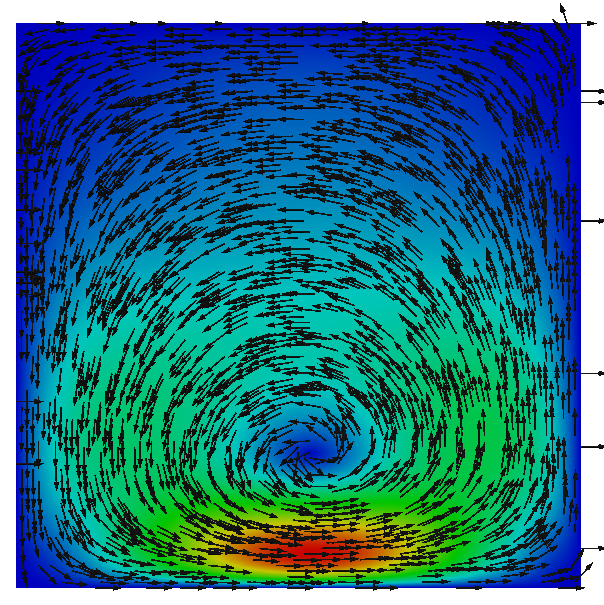}}{$t=2$s}%
\hspace{1pt}%
\stackunder[5pt]{\includegraphics[width=42mm]{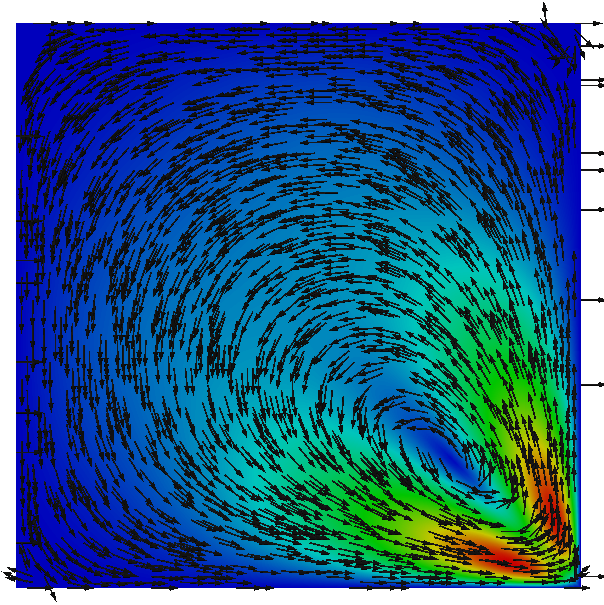}}{$t=2.25$s} 
\stackunder[5pt]{\includegraphics[width=42mm]{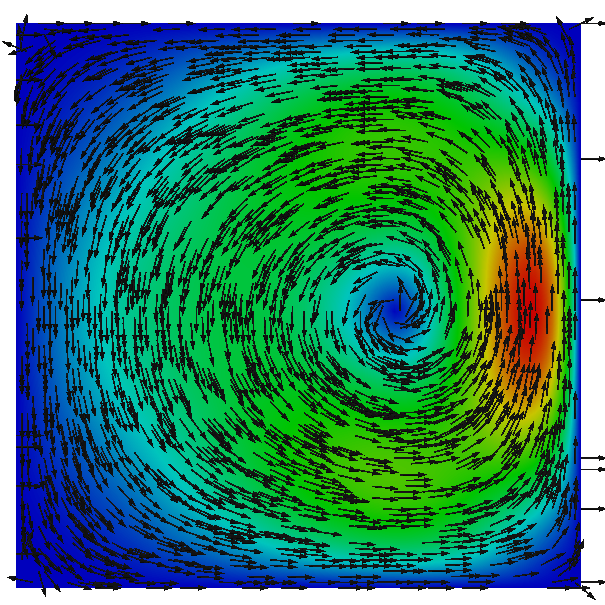}}{$t=2.5$s}%
\hspace{1pt}%
\stackunder[5pt]{\includegraphics[width=42mm]{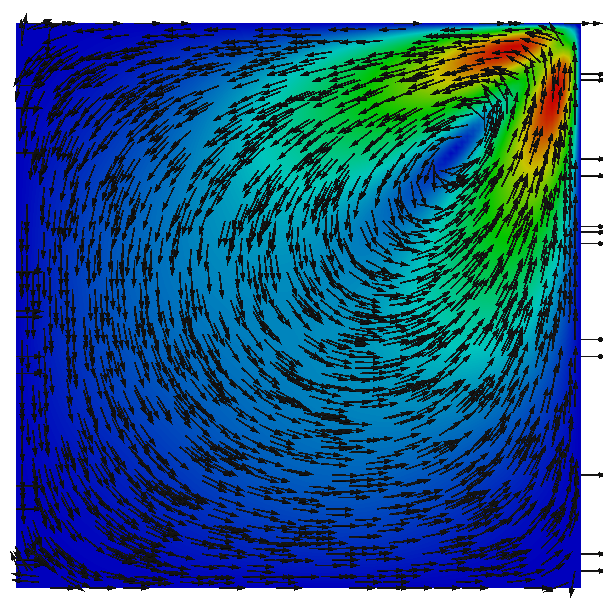}}{$t=2.75$s}%
\end{center}
\caption[Spinning magnet experiment: velocity field]{\textbf{Spinning magnet experiment: velocity field.} Same experiment as in Figure~\ref{figura4}, but here we plot the velocity field at times $t=2$s, $t=2.25$s, $t=2.5$s and $t=2.75$s. The velocity vectors are normalized to facilitate their visualization. It is easy to see that we have a positive circulation of the velocity field.\label{figura5}}
\end{figure}


\begin{figure}
\begin{center}
\stackunder[5pt]{\includegraphics[width=42mm]{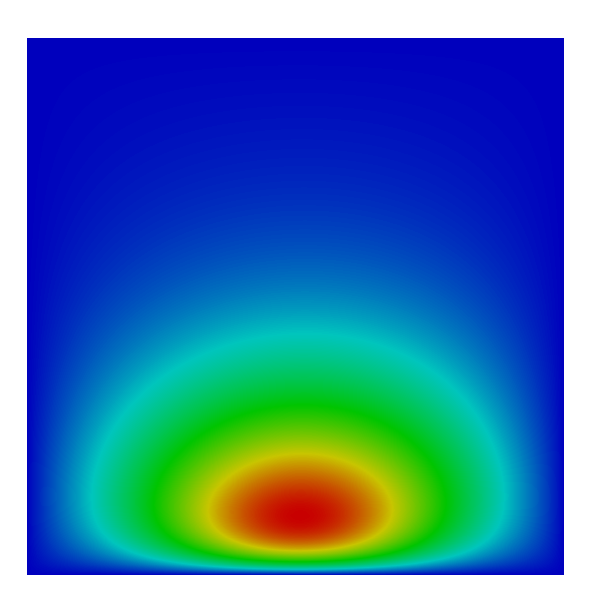}}{$t=2$s}%
\hspace{1pt}%
\stackunder[5pt]{\includegraphics[width=42mm]{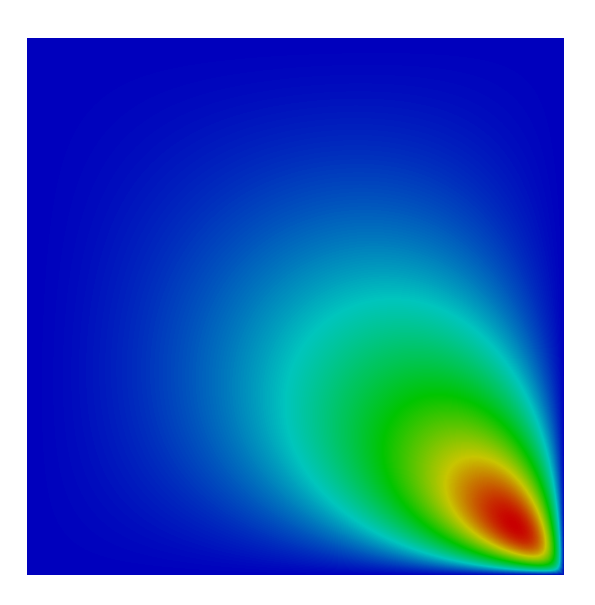}}{$t=2.25$s} 
\stackunder[5pt]{\includegraphics[width=42mm]{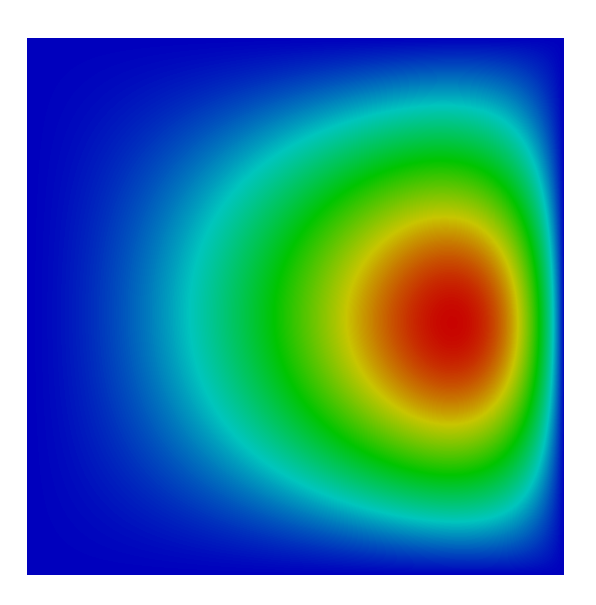}}{$t=2.5$s}%
\hspace{1pt}%
\stackunder[5pt]{\includegraphics[width=42mm]{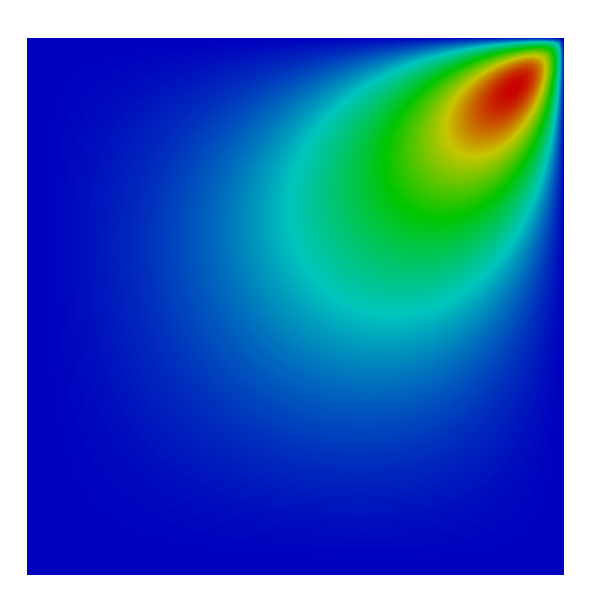}}{$t=2.75$s}%
\end{center}
\caption[Spinning magnet experiment: angular velocity field]{\textbf{Spinning magnet experiment: angular velocity field.} Angular velocity (spin) at times $t=2$s, $t=2.25$s, $t=2.5$s and $t=2.75$s for the experiment of \S\ref{sub:spinmagnet}. The angular velocity $\bv{w}$ takes positive values, i.e. $\bv{w} = w \hat k$ is a vector pointing out of the plane. This is consistent with the velocity field which shows a positive circulation ($\curl{}$) in Figure~\ref{figura5} and also with the fact that the magnetic dipole is spinning around the box in counter-clockwise direction. \label{figura6}}
\end{figure}

\subsection{Experiment 2: Ferrofluid pumping}
\label{sub:pump}
This experiment is related to what was the initial motivation for the development of ferrofluids \cite{Stephen1995}, which is pumping by means of magnetic fields without the action of any moving or mechanical device.  There are two well known methodologies used to induce pumping in ferrofluids: 
\begin{itemize}
 \item[\itemizebullet] Using a spatially-uniform but sinusoidal-in-time magnetic field \cite{Zahn95}.
 \item[\itemizebullet] Using a magnetic field that is varying in space and time \cite{Mao05}.
\end{itemize}
Here we will follow the second approach: we will apply a magnetic field which keeps its polarity (sign) constant, but its intensity changes in space, basically a periodic sequence of peaks (pulses) that travel in the direction we
want to induce linear momentum. This methodology was chosen because if we assume that we are always close to equilibrium, that is $\bv{m} \approx \permit \heff$, we get the following crude approximation for the Kelvin force:
\begin{align*}
\mu_0 (\bv{m}\cdot\nabla )\heff 
\approx \mu_0 \permit (\heff\cdot\nabla)\heff 
= \tfrac{\mu_0 \permit}{2} \nabla |\heff |^2 \, . 
\end{align*}
Consequently, if $\nabla\heff \approx  0$, then there is no force acting on the ferrofluid, and a methodology based on a spatially uniform magnetic field has very little chances of success. If we want to induce linear momentum in the ferrofluid we need to create steep gradients in the magnetic field. Technical details about the physical implementation of similar ideas, all of them using a magnetic field which resembles traveling pulses or a traveling magnetic wave, can be found in \cite{Mao05,Mao11,Crowley1989pat}.

The idea of a periodic sequence of pulses that travels in the direction we want to induce linear momentum was numerically recreated in a channel of 6 units of length, and one unit of height, using a total of $64$ dipoles: $32$ on the lower part of the duct and $32$ on the upper part, distributed uniformly through 2 units of length in its middle section. The configuration of magnetic dipoles, and the intensities associated to them, in the middle section of the channel is sketched in  Figure~\ref{figura7}. The corresponding applied magnetic field is computed as $\ha = \sum_{s = 1}^{64} \alpha_s \nabla\hapot_s$, where $\alpha_s = |\text{sin}(\omega t - \kappa x_s)|^{2q}$, $\omega =2\pi f$ with $f = 10\text{Hz.}$, $q = 5$, $\kappa = 2 \pi / \lambda$ with $\lambda = 1.0$, creating the effect of pulses traveling from left to right. In Figure~\ref{travelingpulses} we plot the function $\alpha_s = |\text{sin}(\omega t - \kappa x_s)|^{2q}$ in terms of $x_s$, so that the reader can appreciate the shape of the 
pulses (intensity of the magnetic field). Some numerical results are depicted in Figures~\ref{figura9}--\ref{figura10}.

\begin{figure}
\setlength\fboxrule{1pt}
\begin{center}
\fbox{\includegraphics[scale=0.16]{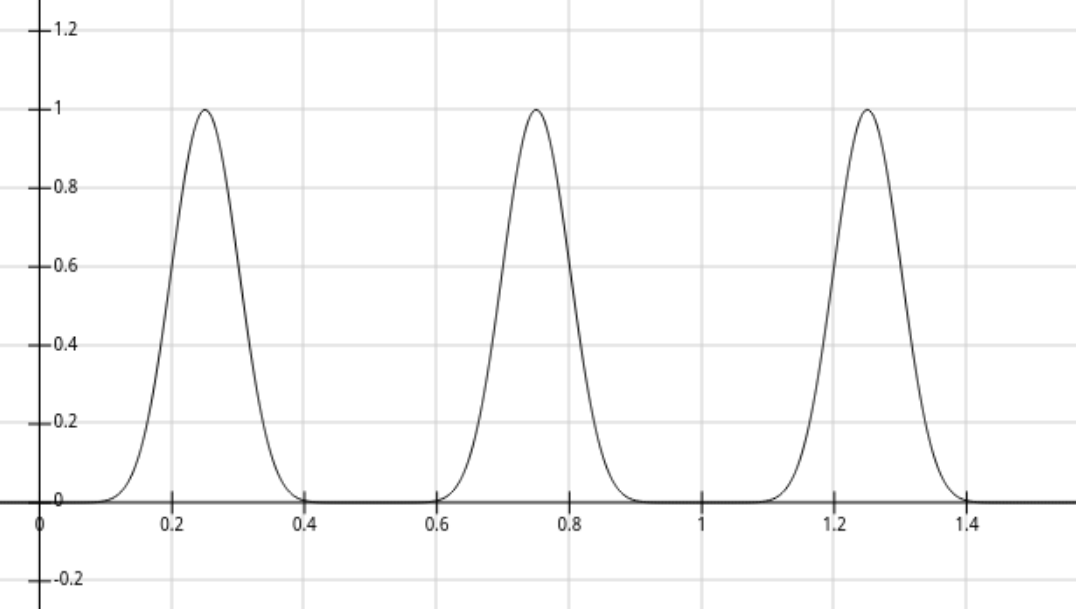}}
\end{center}
\caption[Plot of the function $\alpha_s = |\text{sin}(\omega t - \kappa x_s)|^{2q}$]{\textbf{Plot of the function $\alpha_s = |\text{sin}(\omega t - \kappa x_s)|^{2q}$}. Here $x_s$ is the horizontal axis, $t = 0$, $\kappa = 2\pi$, and $q = 5$. \label{travelingpulses}}  
\end{figure}

\begin{figure}
\setlength\fboxrule{0pt}
\begin{center}
\fbox{\includegraphics[scale=0.30]{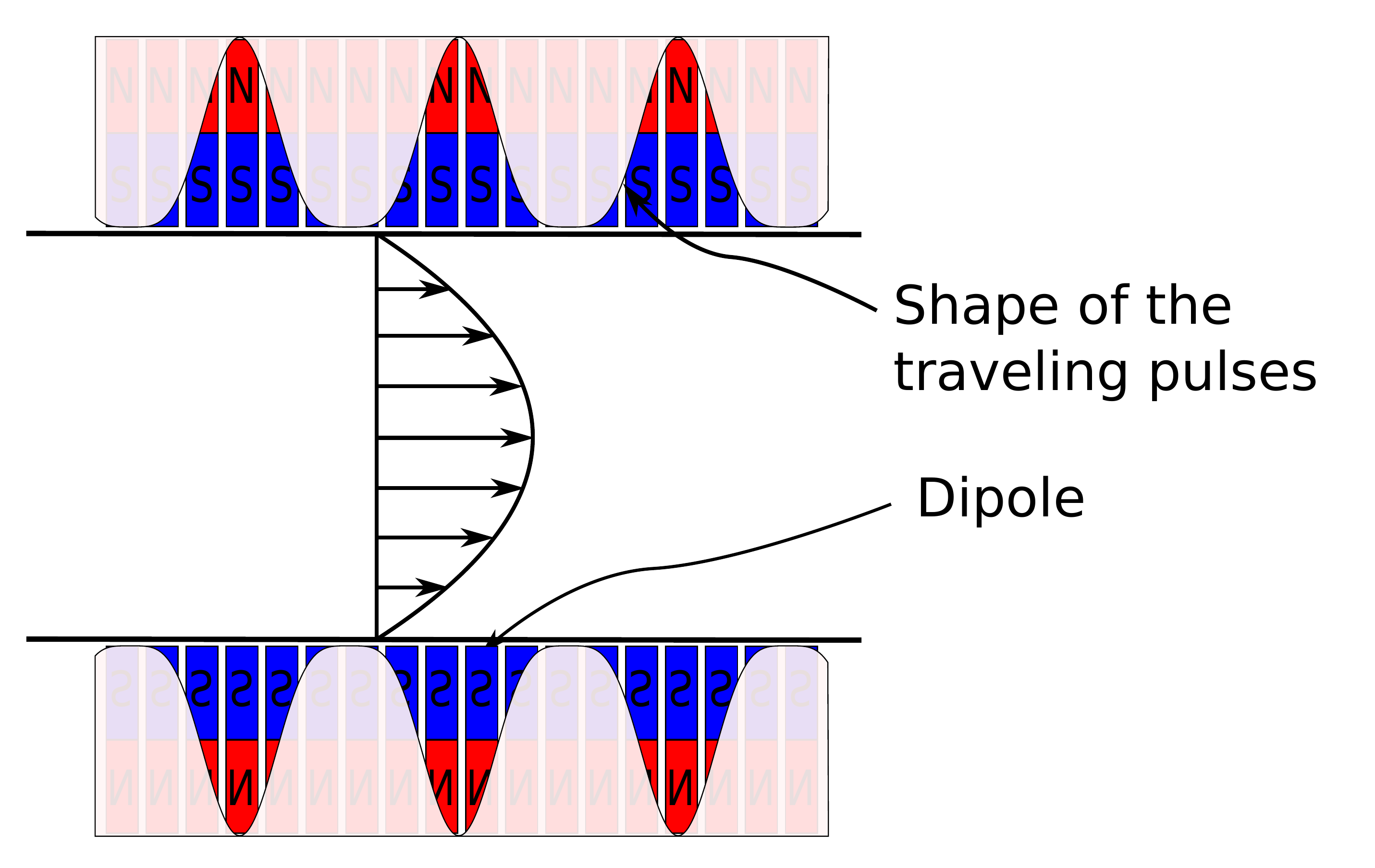}} 
\end{center}
\caption[Ferrofluid pumping experiment: setup (sketch) of the experiment]{\textbf{Ferrofluid pumping experiment: setup (sketch) of the experiment}. Sketch of the middle section of the channel showing the setup of the ferrofluid pumping experiment of \S\ref{sub:pump}. The vertical bars represent magnetic dipoles (see formula \eqref{dipole2D}) located in the center of the channel. All the dipoles have the same polarity, which does not change in the $x$ direction, and their intensity is represented by the unshaded region. These dipoles do not move, but their intensity changes in time, reproducing the effect of traveling pulses. \label{figura7}}  
\end{figure}


The magnitude of the magnetization is only relevant in the middle as it can be appreciated in Figure~\ref{figura8}. A noteworthy outcome of these experiments is displayed in Figure~\ref{figura10}: where the spin does not seems to help (it induces a flow in the opposite direction), which is a very intriguing result. More precisely, the spin in the upper part of Figure~\ref{figura10} is negative and the spin in the lower part of the channel is positive, which will induce flow from right to left as it was shown numerically in \cite{Tom13}. This is an unexpected outcome which requires further investigation.

\begin{figure}
\begin{center}
\includegraphics[scale=0.4]{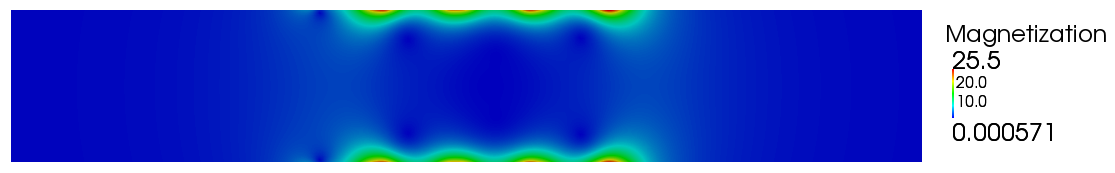}
\end{center}
\caption[Ferrofluid pumping experiment: detail of the magnetization in the middle section of the channel.]{
\textbf{Ferrofluid pumping experiment: detail of the magnetization in the middle section of the channel.} The intensity of the magnetization is relevant close to the upper and lower walls, and is negligible in the center of the channel. This means that most of the force is exerted close to the upper and lower walls. \label{figura8}}
\end{figure}

\begin{figure}
\begin{center}
\includegraphics[scale=0.26]{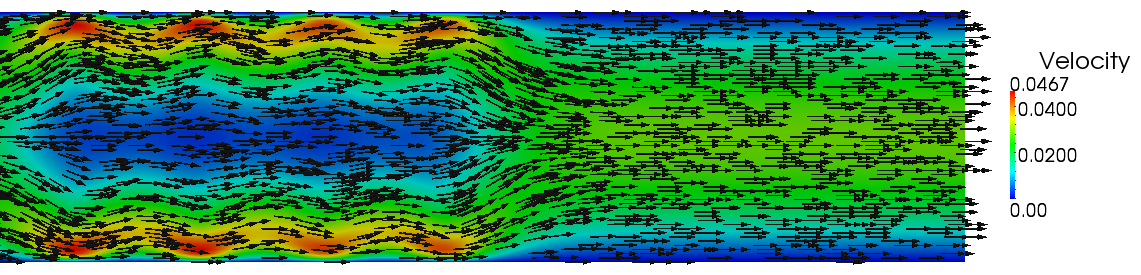}
\end{center}
\caption[Ferrofluid pumping experiment: detail of the velocity in the middle and final section of the channel]{\textbf{Ferrofluid pumping experiment: detail of the velocity in the middle and final section of the channel.} It can be appreciated that on the region affected by the external magnetic fields (left) the velocity field is not very aligned, but it becomes much more uniform as we move to the right of the picture (exit section of the channel). \label{figura9}} 
\end{figure}
 
\begin{figure}
\begin{center}
\includegraphics[scale=0.4]{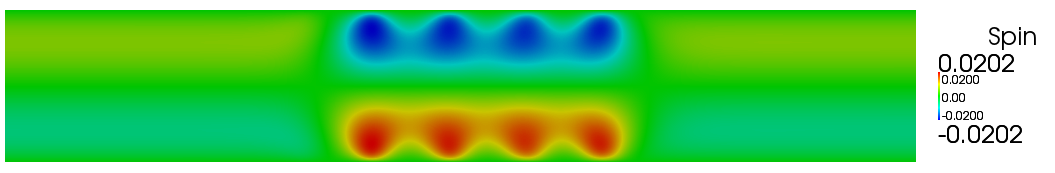}
\end{center}
\caption[Ferrofluid pumping experiment: angular velocity]{\textbf{Ferrofluid pumping experiment: angular velocity.} Note that it is positive (counterclockwise) in the lower part of the channel and negative (clockwise) in the upper part. Such profile of angular velocities does not help in the pumping process from left to right. This is a quite intriguing effect. \label{figura10}}
\end{figure}

\subsection{Experiment 3: Ferromagnetic stirring of a passive scalar}
\label{sub:stirr}
For low Reynolds numbers ($\textsl{Re} \backsimeq 1$) one of the bottlenecks of chemical reactions is mixing, in particular when the effects of diffusion are not strong enough. Slow mixing can lead to very long waiting times or a very poor completion of chemical reactions. Flows inside microfluidic devices (usually called lab-on-chip devices) have quite low Reynolds numbers and there is a growing interest in accelerating the mixing process with the addition of active and passive mixers. Passive mixer designs can range from simple grooved channels and Y-shaped channels to much more sophisticated ideas \cite{Wong2003,Lee2009,Lin2011}. Among active mixers we can find mixing by means of ferromagnetic particles \cite{Fu2010,Tsai2009}. Here we will illustrate the idea of ferrofluid mixing by adding the following convection diffusion equation:
\begin{align}\label{passiscalareq}
c_t + \bv{u} \cdot \nabla c - \alpha \Delta c = 0  
\end{align}
where $\bv{u}$ is the velocity from the equations of ferrohydrodynamics  \eqref{eq:ferroeq}, and $\alpha = 0.001$ for all our experiments. We consider such a small diffusion so that mixing depends mostly on advection. Equation \eqref{passiscalareq} is uncoupled from the system \eqref{eq:ferroeq}, meaning that no quantity in \eqref{eq:ferroeq} depends on $c$ (passive scalar).

We explore the possibility of designing an active mixer by applying a time-dependent magnetic field to a ferrofluid contained in $\Omega=(0,1)^2$, and we track the evolution of the concentration $c$ satisfying equation \eqref{passiscalareq}. As in the previous experiments we make no attempt to use realistic scalings or try to relate the numerical results with any physical situation (as microflows). The main goal is to provide a proof of concept of some ideas and discard those which have no room for additional improvement. Designing a functional stirrer is a deceptively simple task. For instance, using the setup of Figure~\ref{figura3} which involves moving physically the magnet does not work, at least in our experience, even if we relax the viscosity and use values much smaller than unity, or if we make the magnet spin around the box at a much higher angular velocity.

As a first attempt, we let the applied magnetizing field be that of two dipoles with alternating polarity, more precisely
\begin{align}\label{alternating}
\ha = \sum_{s = 1}^{2} \alpha_s \nabla\hapot_s \, , \ \ 
\text{where} \ \ 
\alpha_1 = \alpha_0 \, \text{sin}(\omega t) \, , \ \ 
\alpha_2 = \alpha_0 \, \text{sin}(\omega t + \pi/2) \, ,  
\end{align}
i.e., the two dipoles have a phase mismatch of $\pi/2$; see Figure~\ref{figura19} for the sketch of this setup. Here $\omega = 2 \pi f$, is the angular velocity of the periodic excitation, $f$ is the frequency, and $\alpha_0$ is the amplitude. The magnetization and velocity fields induced by this setup are displayed in Figures~\ref {figura16} and \ref{figura18}. Some results regarding the ability of this setup to actually induce mixing of the passive scalar $c$ can be appreciated in Figure~\ref{figura17}, where we have used $f = 20 \text{Hz.}$, $\alpha_0 = 5.0$, and $\nu = \nu_r = 0.5$. This setup does indeed shake the fluid as it can be appreciated in Figures \ref{figura18} and \ref{figura17}, but the magnitude of the velocity is very small, less than $10^{-2}$ for most of our experiments, and is not very sensitive to the inputs, meaning that increasing the value of $\omega$ and $\alpha_0$, or using a very small viscosity, does not significantly increase the velocity and as a consequence the mixing 
will be very poor (at least in this context, which is that of an homogeneous fluid). For instance, the results of setting $f = 40 \text{Hz.}$ and $\nu = \nu_r =0.1$, are very similar to those of Figure~\ref{figura17}.

\begin{figure}
\begin{center}
\includegraphics[scale=0.35]{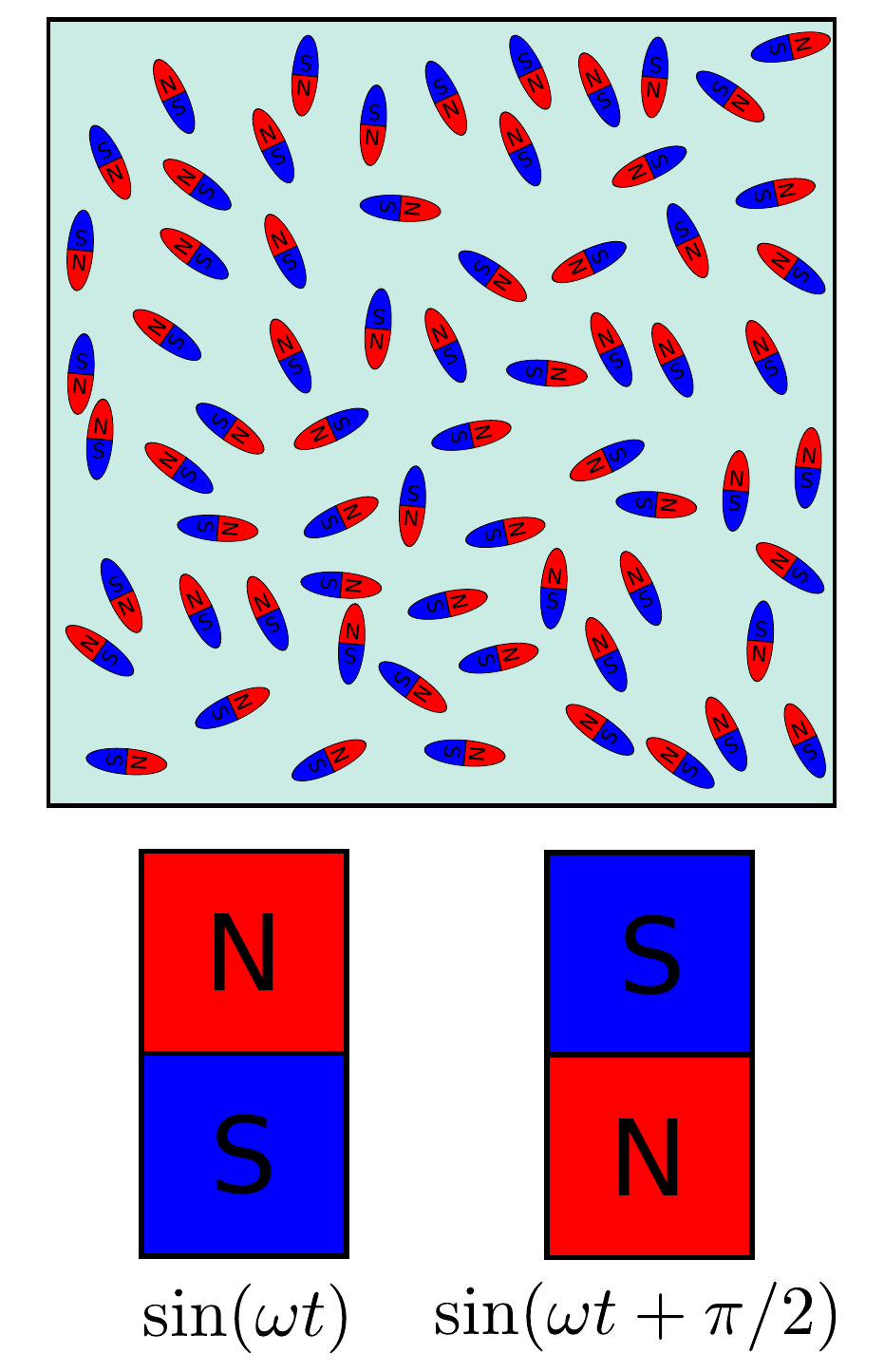}
\end{center}
\caption[Ferromagnetic stirring (first approach): setup]{\textbf{Ferromagnetic stirring (first approach): setup.} The magnetic field satisfies \eqref{alternating}.\label{figura19}}
\end{figure}


\begin{figure}
\begin{center}
\begin{tabular}{ccc}
\includegraphics[width=42mm]{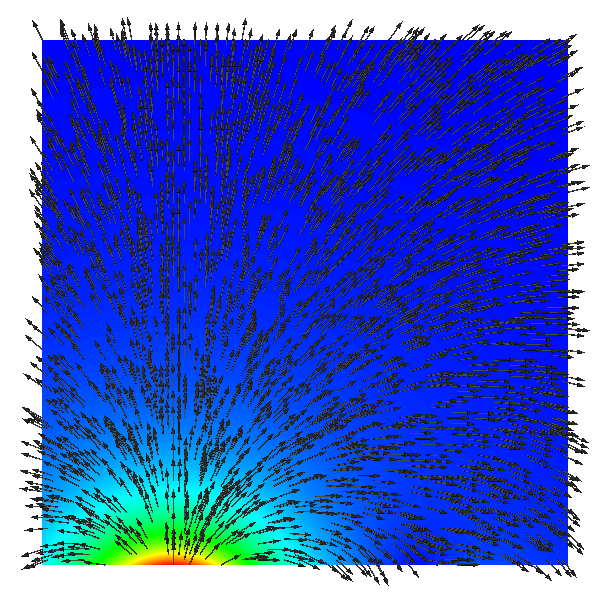}&
\includegraphics[width=42mm]{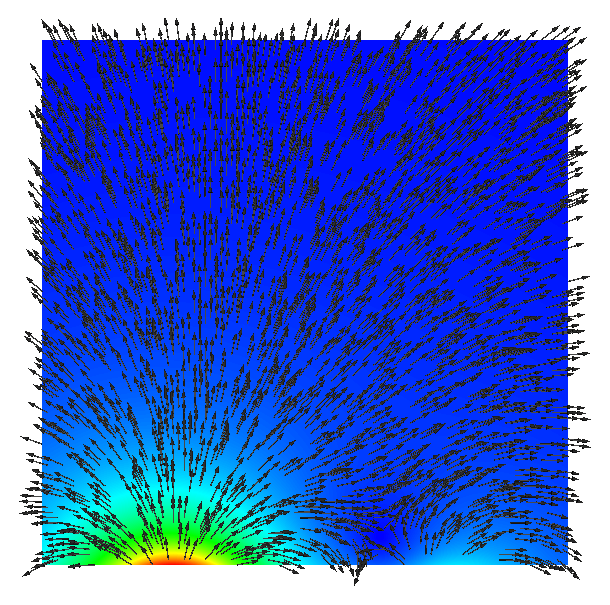}&
\includegraphics[width=42mm]{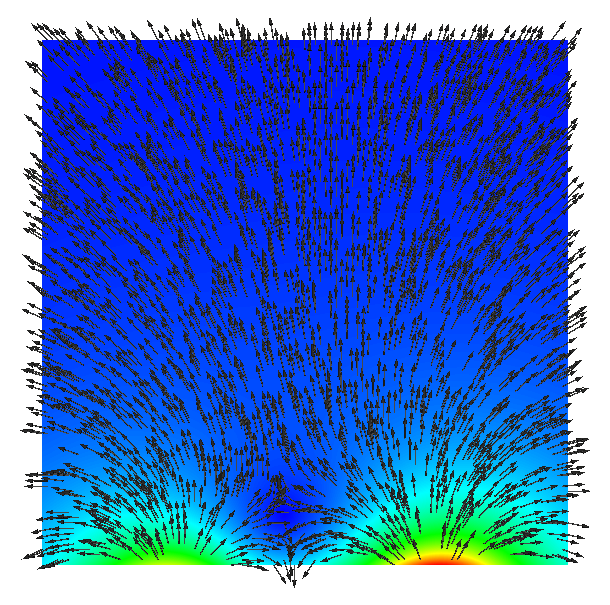}\\
\includegraphics[width=42mm]{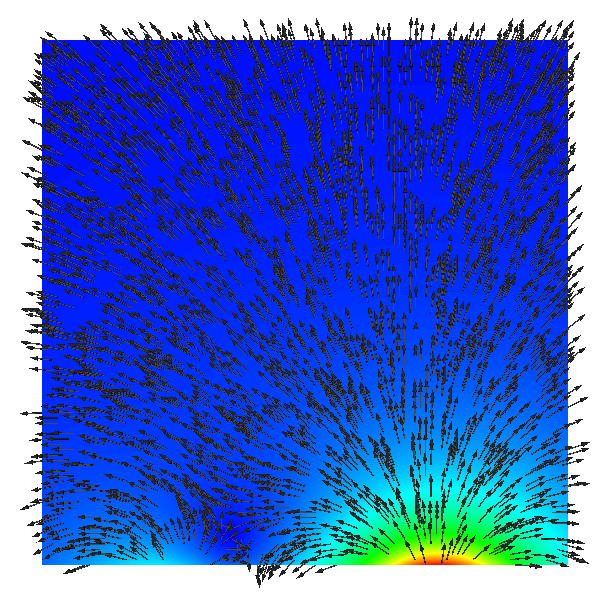}&
\includegraphics[width=42mm]{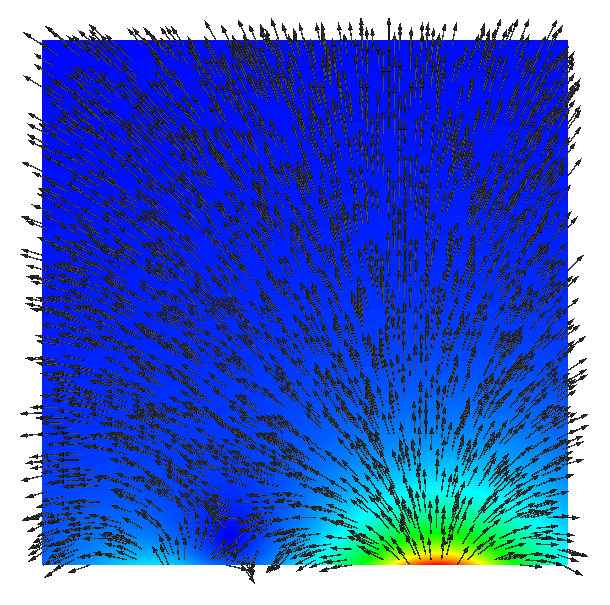}&
\includegraphics[width=42mm]{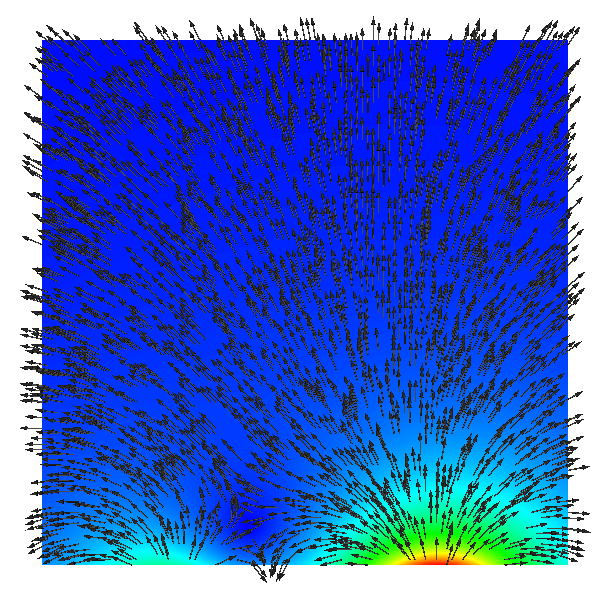}\\
\includegraphics[width=42mm]{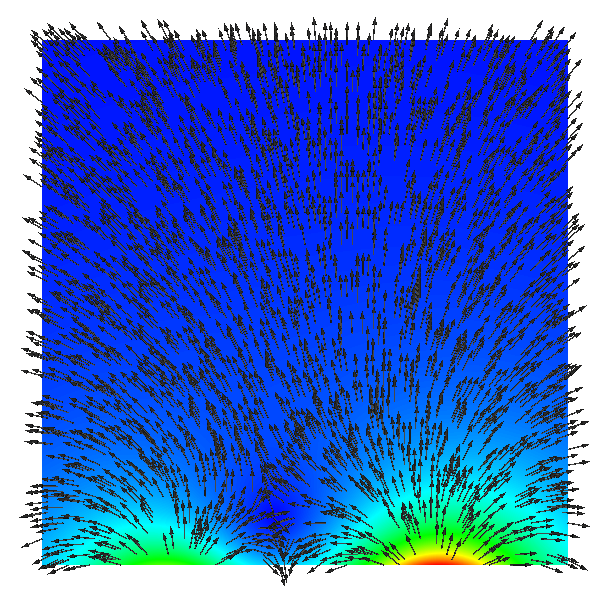}&
\includegraphics[width=42mm]{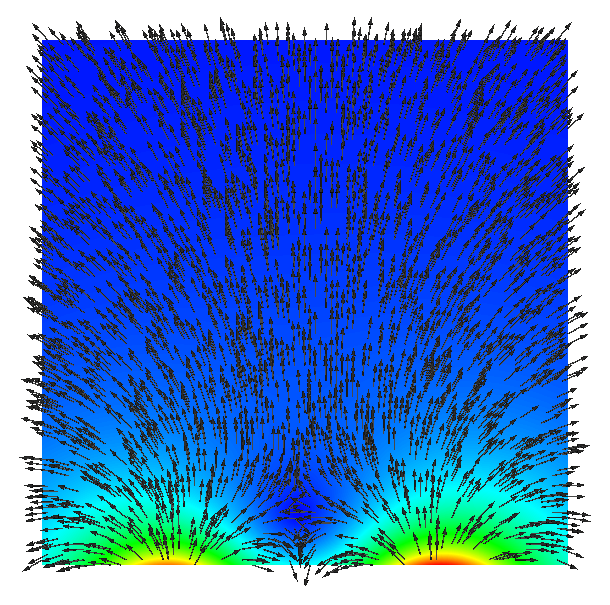}&
\includegraphics[width=42mm]{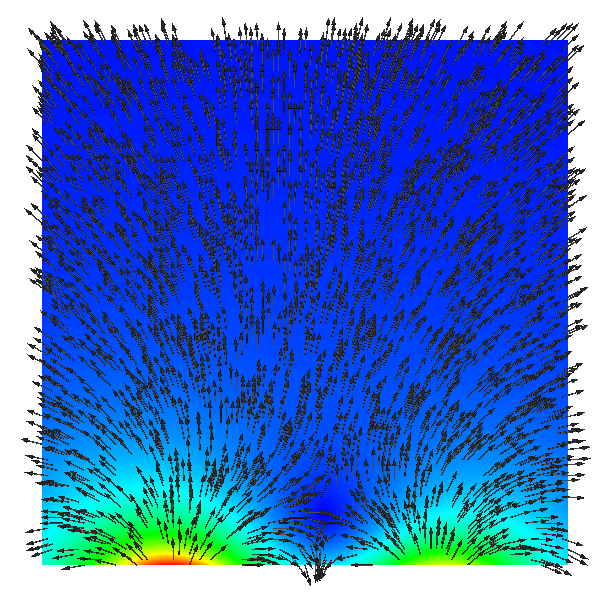}\\
\includegraphics[width=42mm]{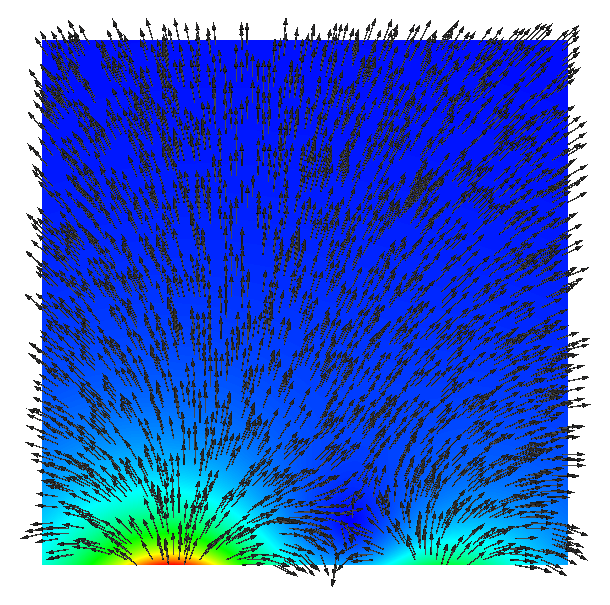}&
\includegraphics[width=42mm]{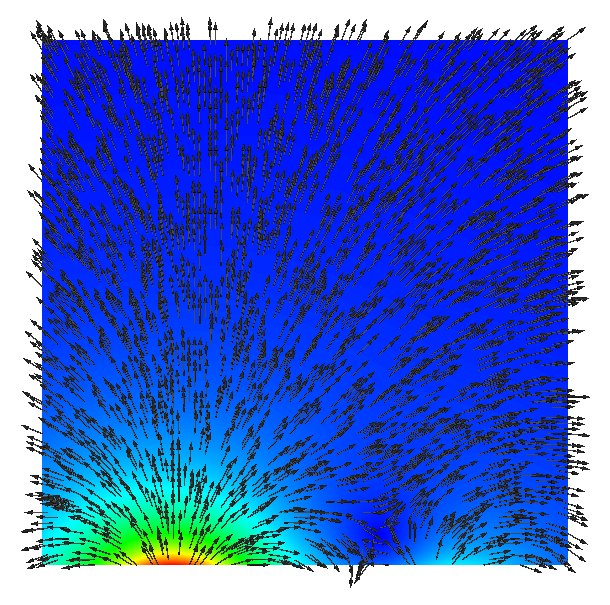}&
\includegraphics[width=42mm]{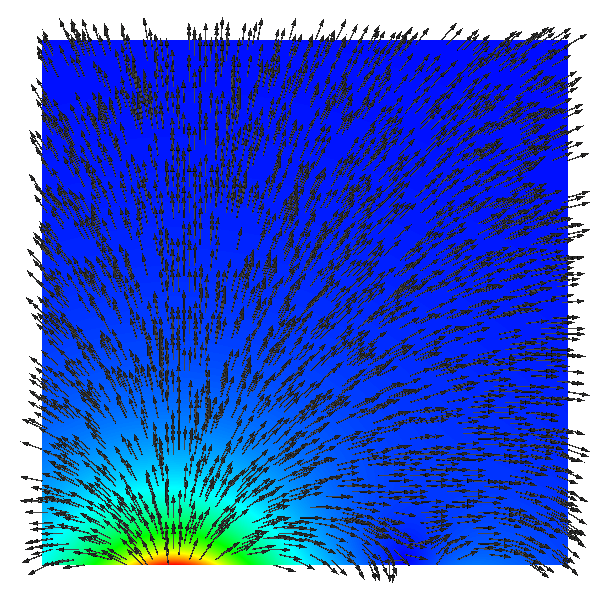}
\end{tabular}
\end{center}
\caption[Ferromagnetic stirring (first approach): evolution of the magnetization field]{\textbf{Ferromagnetic stirring (first approach): evolution of the magnetization field.} This corresponds to the setup described in figure \ref{figura19} during a half period $\tfrac{1}{2f}$. The magnetic field satisfies \eqref{alternating}. The magnets are fixed on the bottom of the container and so is the location where the maximum intensities occur. \label{figura16}}
\end{figure}


\begin{figure}
\begin{center}

  \begin{tabular}{cccc}

    \includegraphics[width=33mm]{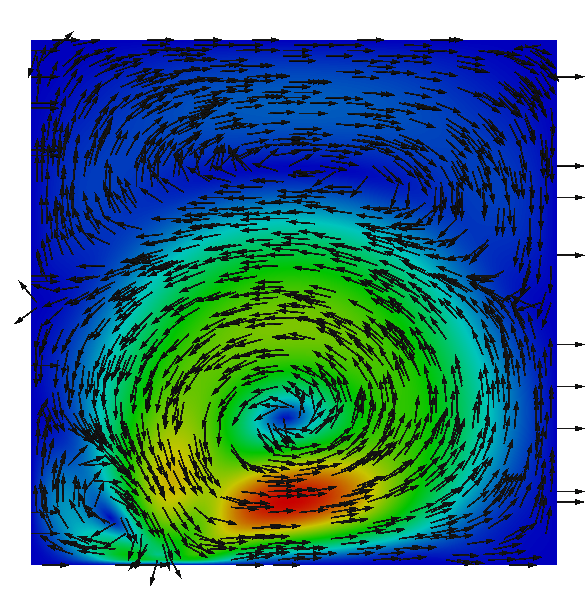}&
    \includegraphics[width=33mm]{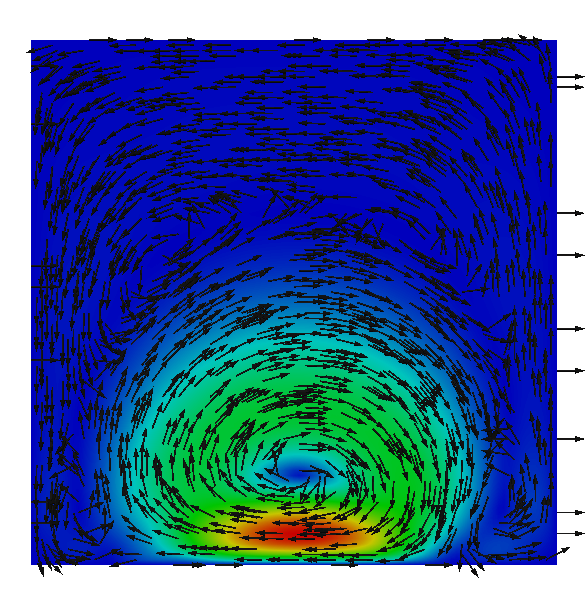}&
    \includegraphics[width=33mm]{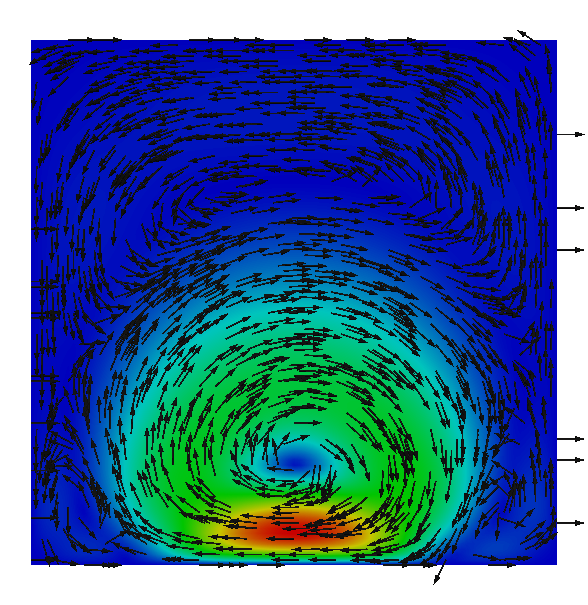}&
    \includegraphics[width=33mm]{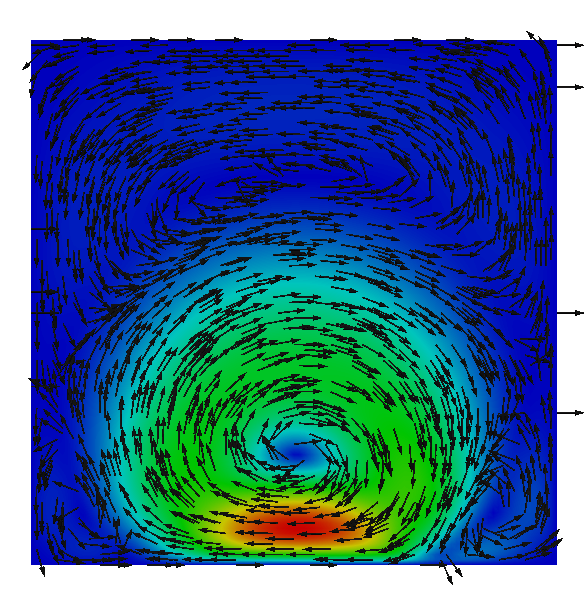}\\
    \includegraphics[width=33mm]{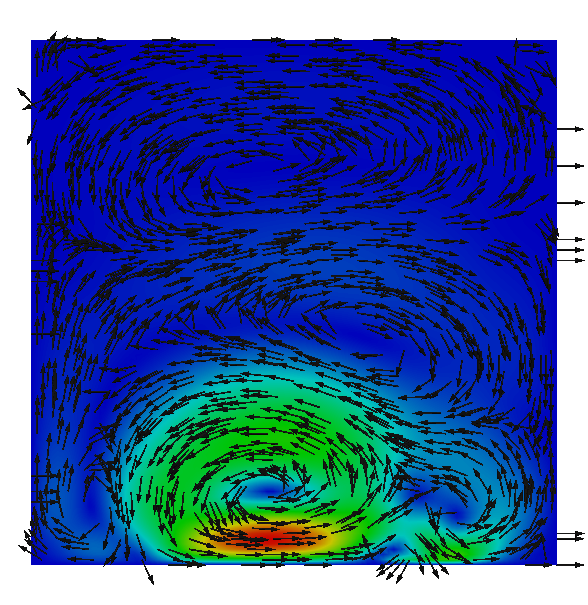}&
    \includegraphics[width=33mm]{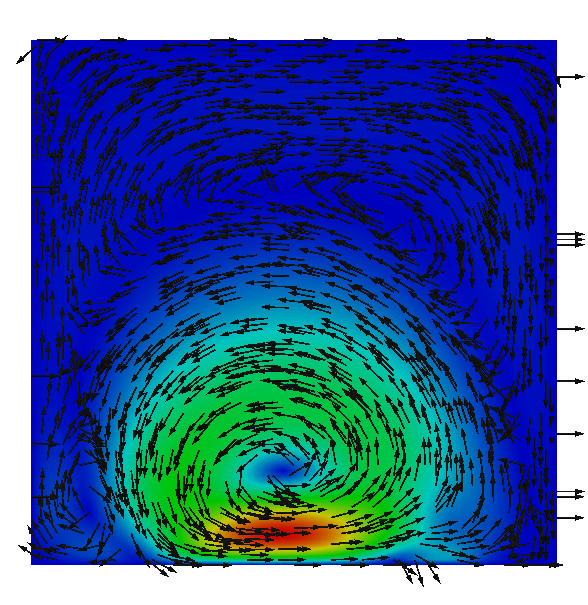}&
    \includegraphics[width=33mm]{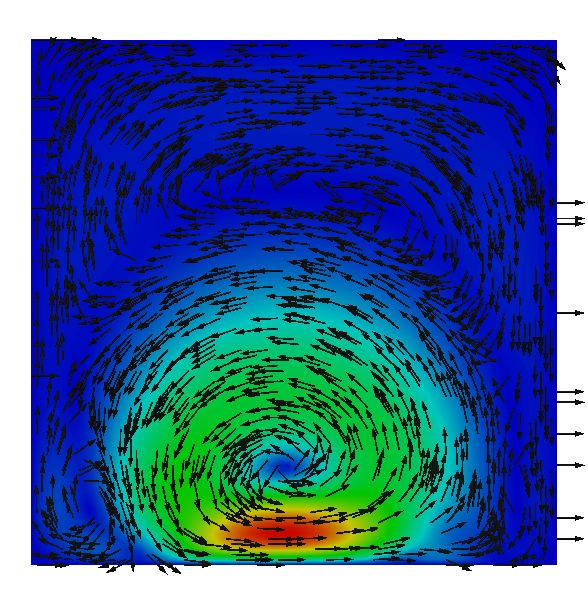}&
    \includegraphics[width=33mm]{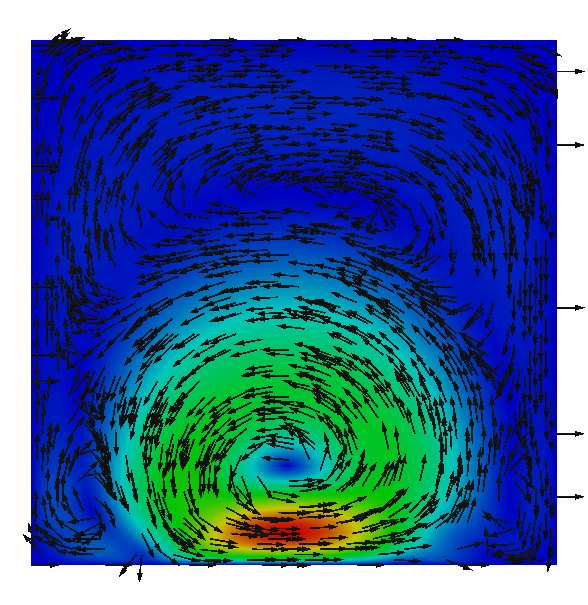}\\
    \includegraphics[width=33mm]{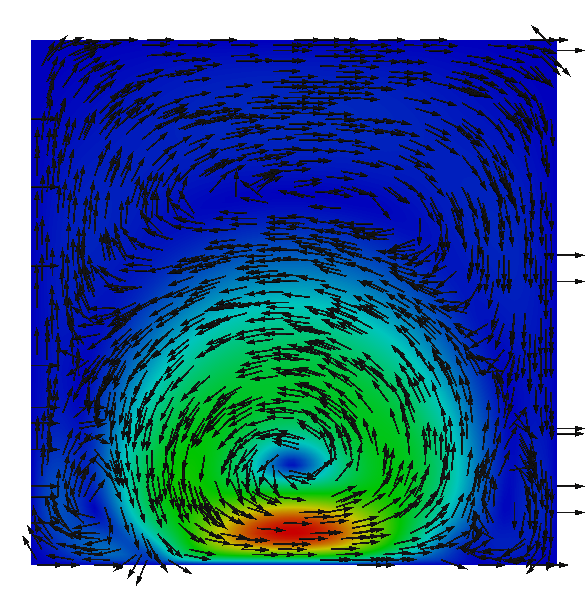}&
    \includegraphics[width=33mm]{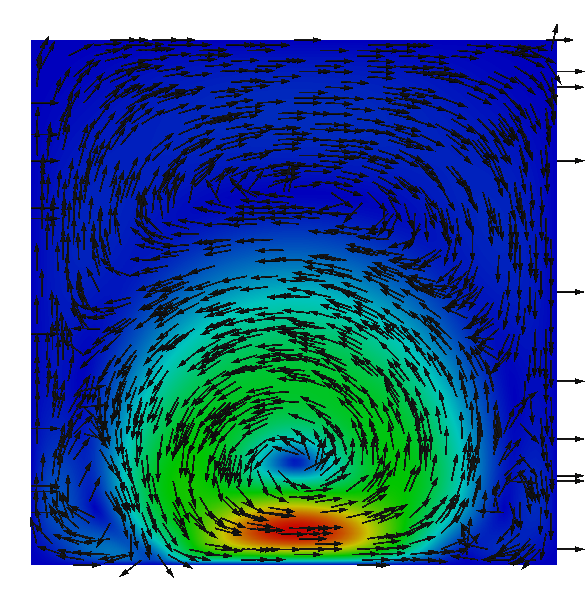}&
    \includegraphics[width=33mm]{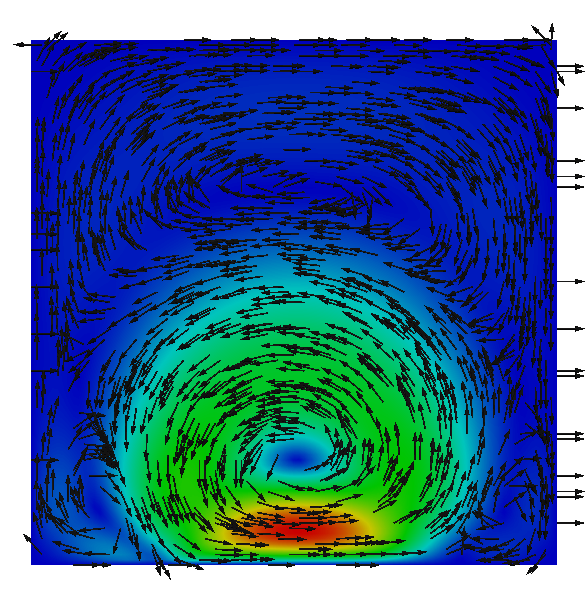}&
    \includegraphics[width=33mm]{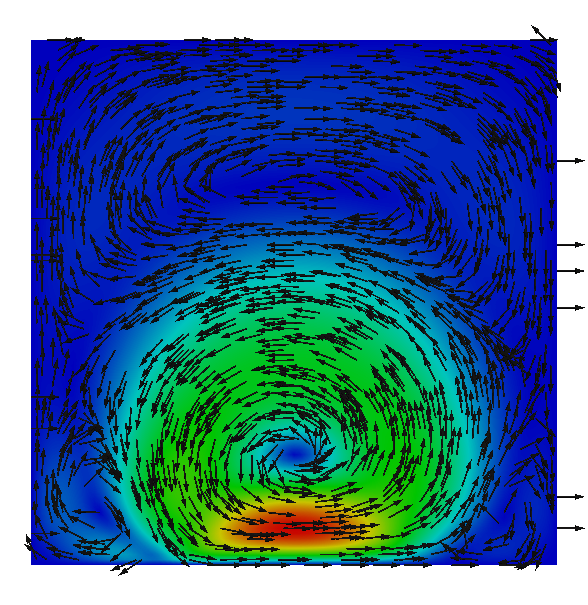}

  \end{tabular}
\end{center}
\caption[Ferromagnetic stirring (first approach): evolution of velocity]{\textbf{Ferromagnetic stirring (first approach): evolution of velocity.} Evolution of the velocity during a half period $\tfrac{1}{2f}$ for the stirring experiment of \S\ref{sub:stirr} (see figure \ref{figura19} for its setup). The magnetic field satisfies \eqref{alternating}. Reading from left to right and top to bottom: there is reversal of the circulation (curl) of the flow between the first two figures, and also between the fourth and fifth figure. Here the scale has been omitted, but the maximum velocity is of the order of $ \simeq 0.008$, enough to induce some mixing of the passive scalar as it can be appreciated in Figure \ref{figura17}, but far from the quality of mixing achieved for instance in Figure \ref{figura15} using the traveling wave approach. \label{figura18}}
\end{figure}

\begin{figure}
\begin{center}
    \setlength\fboxsep{0pt}
    \setlength\fboxrule{1pt}
  \begin{tabular}{cccc}

    \fbox{\includegraphics[width=40mm]{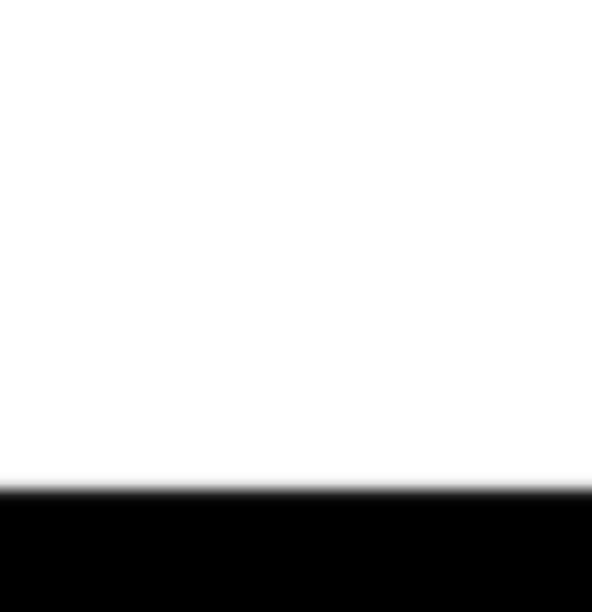}}&
    \fbox{\includegraphics[width=40mm]{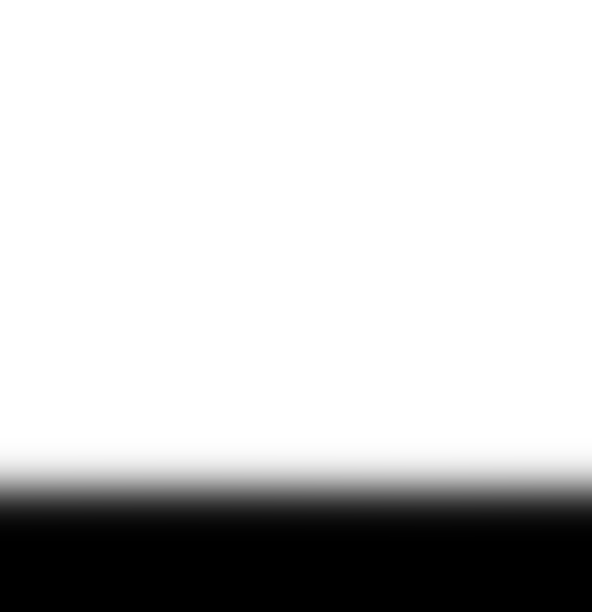}}&
    \fbox{\includegraphics[width=40mm]{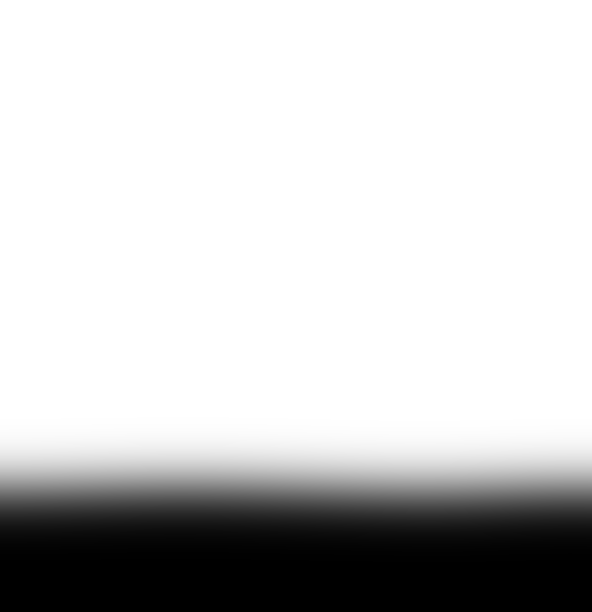}}\\
    \fbox{\includegraphics[width=40mm]{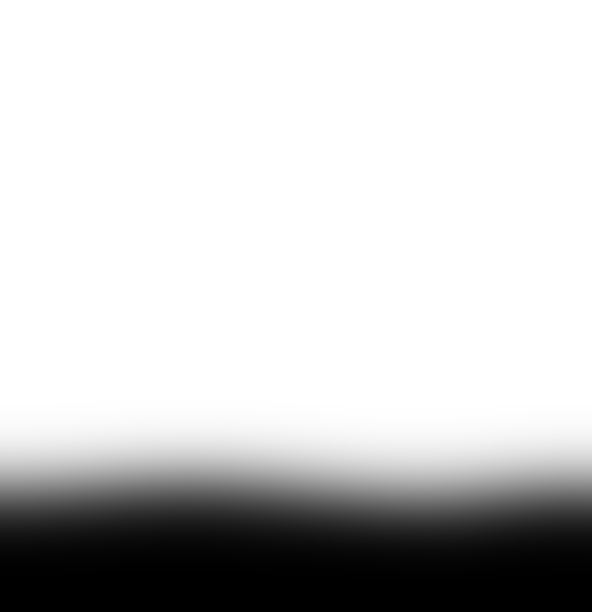}}&
    \fbox{\includegraphics[width=40mm]{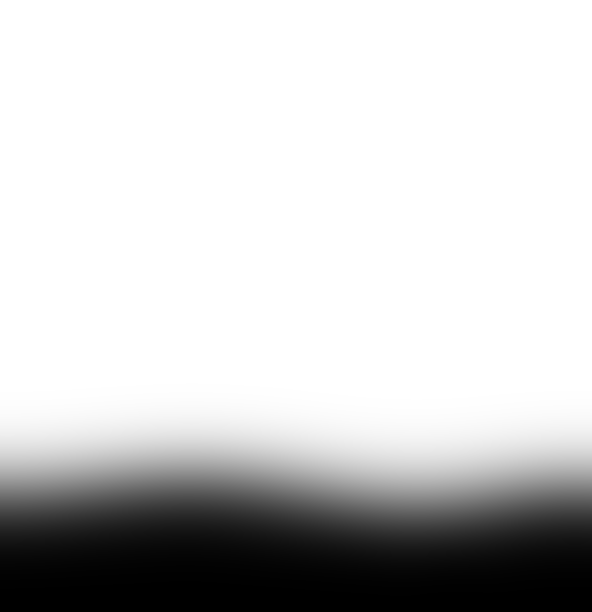}}&
    \fbox{\includegraphics[width=40mm]{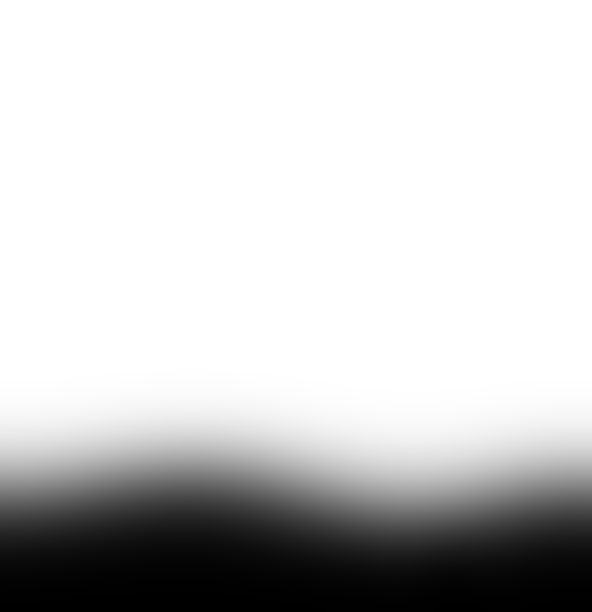}}\\
    \fbox{\includegraphics[width=40mm]{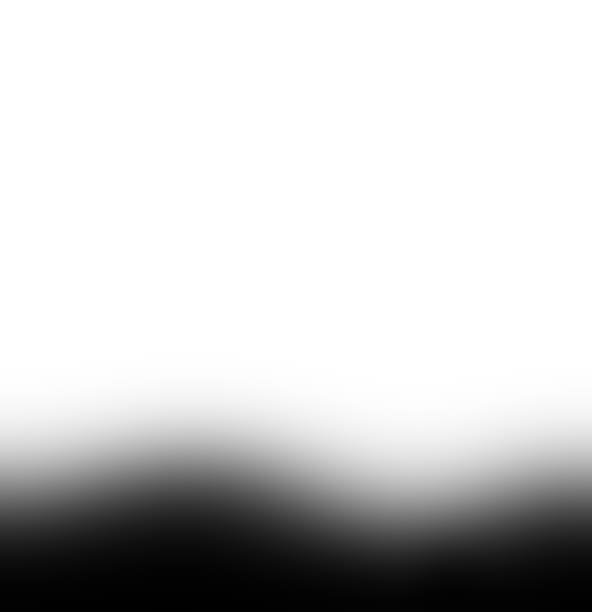}}&
    \fbox{\includegraphics[width=40mm]{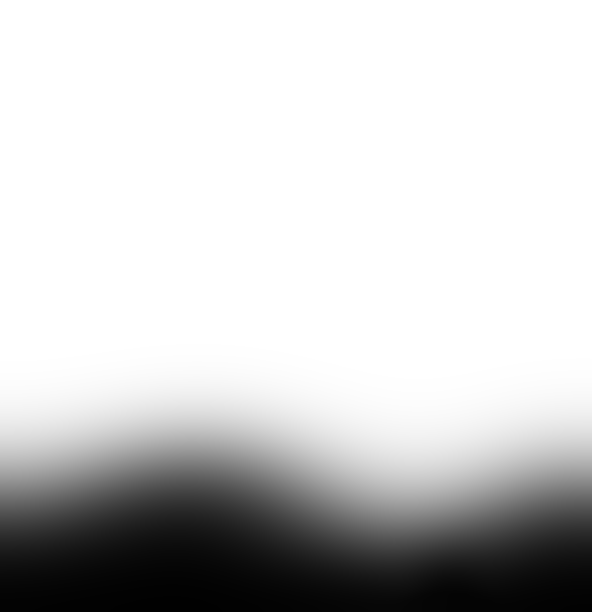}}&
    \fbox{\includegraphics[width=40mm]{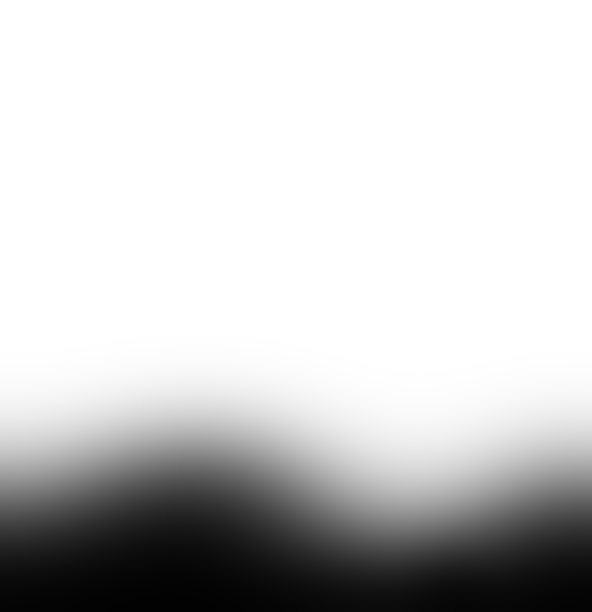}}\\
    \fbox{\includegraphics[width=40mm]{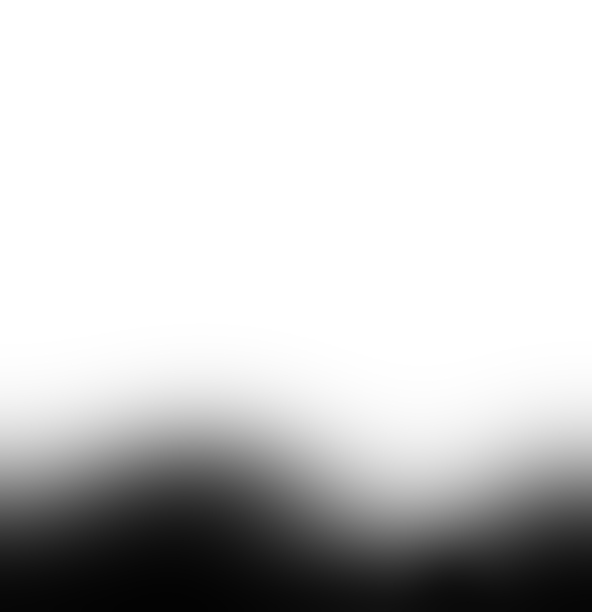}}&
    \fbox{\includegraphics[width=40mm]{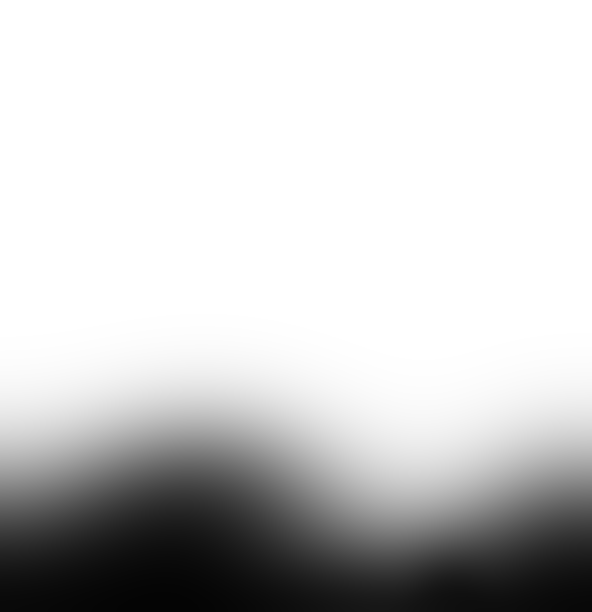}}&
    \fbox{\includegraphics[width=40mm]{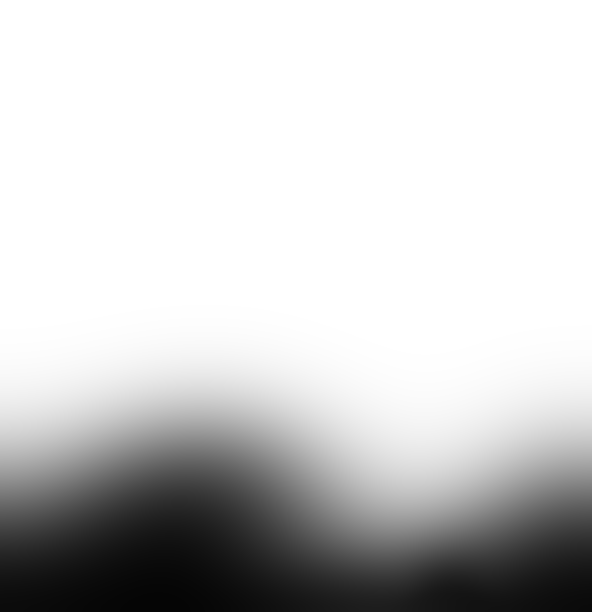}}
  \end{tabular}
\end{center}
\caption[Ferromagnetic stirring (first approach): evolution of the passive scalar]{\textbf{Ferromagnetic stirring (first approach): evolution of the passive scalar.} Evolution of the concentration using the setup described in Figure \ref{figura19}; the magnetizing field satisfies \eqref{alternating}. The sequence of figures shown here was for a total of 4 seconds (100 periods). We start with concentration equal to one on the bottom of the box (the black region on the bottom). These results were obtained with $f = 20\text{Hz}$, and $\nu = \nu_r =0.5$. \label{figura17}}
\end{figure}

%
%
%

A second, and successful, approach is that of a traveling wave. Consider eight dipoles on the lower edge of the box pointing upwards, much like the setup of Figure~\ref{figura19}, so that the magnetic field is given by 
\begin{align}\label{travewavemethod}
\ha = \sum_{s = 1}^{8} \alpha_s \nabla\hapot_s \, , \ \
\alpha_s = \alpha_0 \, |\text{sin}(\omega t - \kappa x_s)| \, , \ \
\kappa = 2 \pi / \lambda \, , \ \
\lambda = 0.8 \, .
\end{align}
Figure~\ref{figura12} displays some promising results. There is mixing, even under quite unfavorable conditions: $f = 20\text{Hz.}$, $\alpha_0 = 5.0$, and $\nu = \nu_r = 0.5$. This setting is sensitive to the inputs, as $\omega$ and $\alpha_0$ increase and the viscosity diminishes, the mixing improves. The magnetization and velocity profile for this setting are shown in Figures \ref{figura13} and \ref{figura14}, respectively. Much more striking results can be found in Figure~\ref{figura15}, where we use a higher value for $\omega$, a lower viscosity, a higher intensity $\alpha_s$, and we also run the simulation for a longer time.

\begin{figure}
\begin{center}
    \setlength\fboxsep{0pt}
    \setlength\fboxrule{1pt}
  \begin{tabular}{cccc}

    \fbox{\includegraphics[width=30mm]{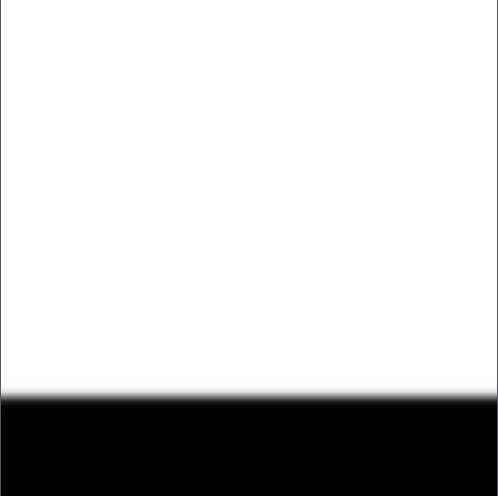}}&
    \fbox{\includegraphics[width=30mm]{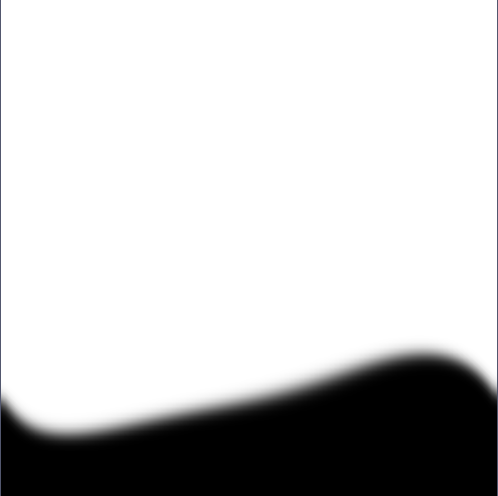}}&
    \fbox{\includegraphics[width=30mm]{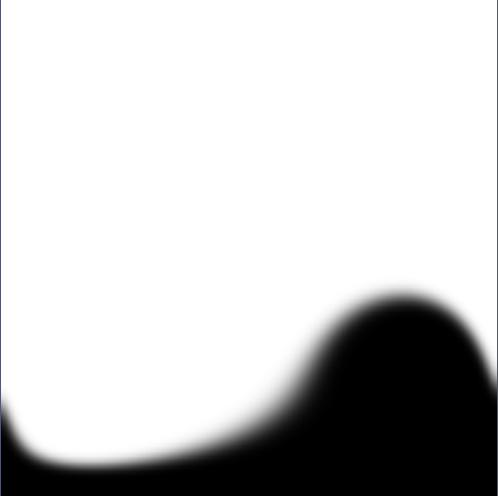}}&
    \fbox{\includegraphics[width=30mm]{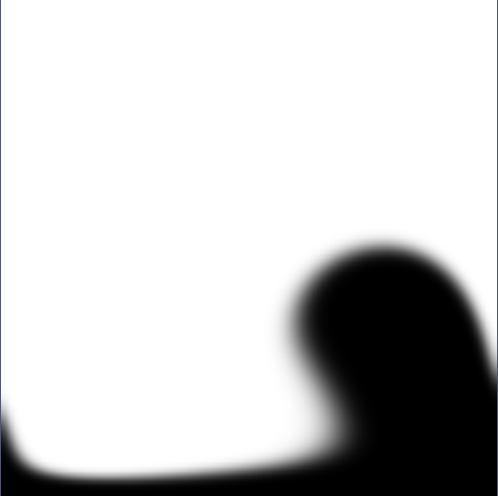}}\\

    \fbox{\includegraphics[width=30mm]{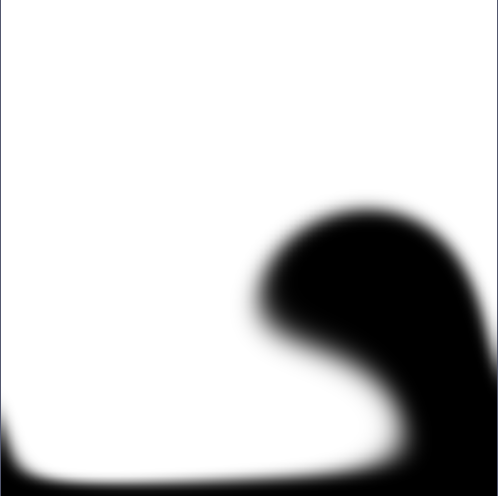}}&
    \fbox{\includegraphics[width=30mm]{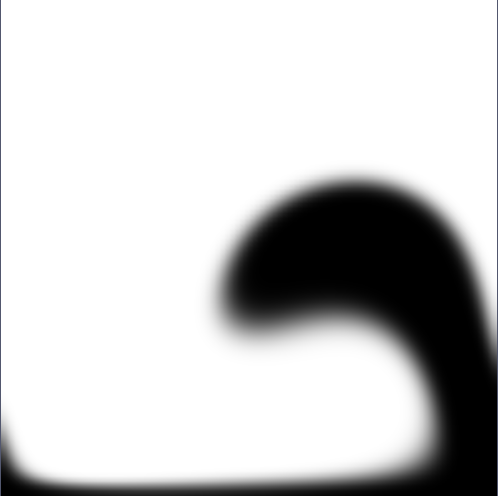}}&
    \fbox{\includegraphics[width=30mm]{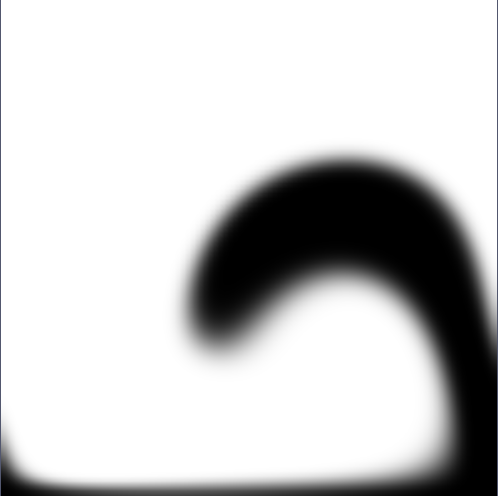}}&
    \fbox{\includegraphics[width=30mm]{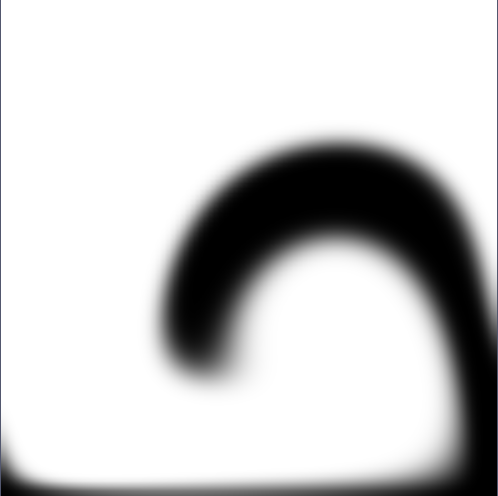}}
  \end{tabular}
\end{center}
\caption[Ferromagnetic stirring (second approach): evolution of the passive scalar]{\textbf{Ferromagnetic stirring (second approach): evolution of the passive scalar.} Contour of the passive scalar satisfying equation \eqref{passiscalareq} from time $t = 0$ to $t = 1.0$ (20 periods). The magnetic field satisfies \eqref{travewavemethod}. We start with concentration equal to one on the bottom of the box, the velocity field induced by the traveling wave of magnetization drags concentration on the bottom and takes it upward. These results were obtained with $f = 20\text{Hz.}$, $\nu = \nu_r =0.5$, and $\alpha_0 = 5.0$. \label{figura12}} 
\end{figure}

%
%

\begin{figure}
\begin{center}
  \begin{tabular}{cccc}
   \includegraphics[width=33mm]{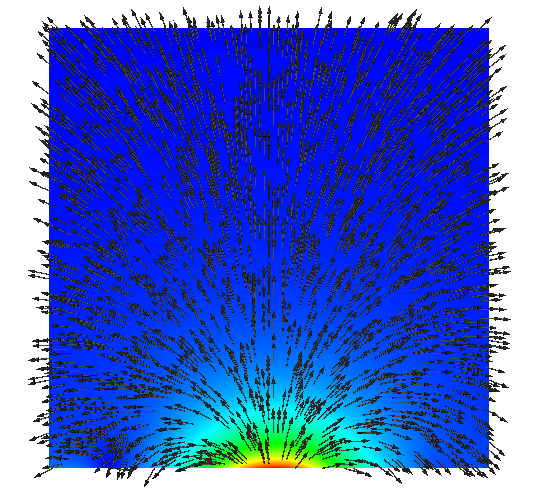}&
   \includegraphics[width=33mm]{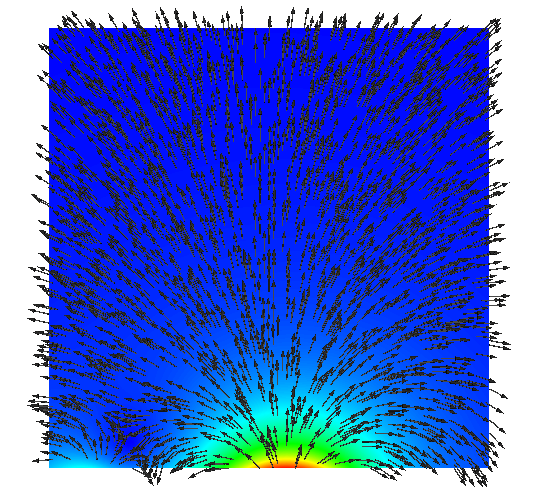}&
   \includegraphics[width=33mm]{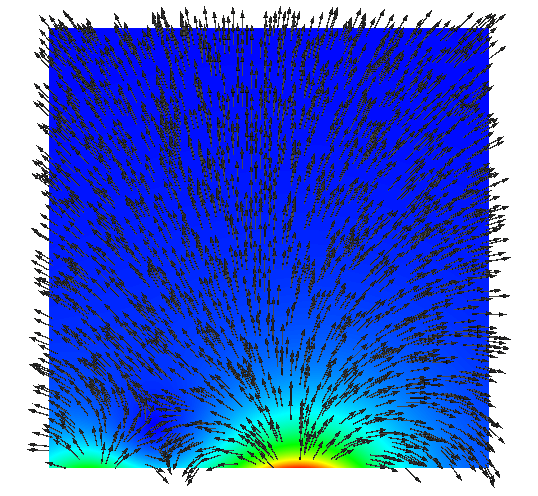}&
   \includegraphics[width=33mm]{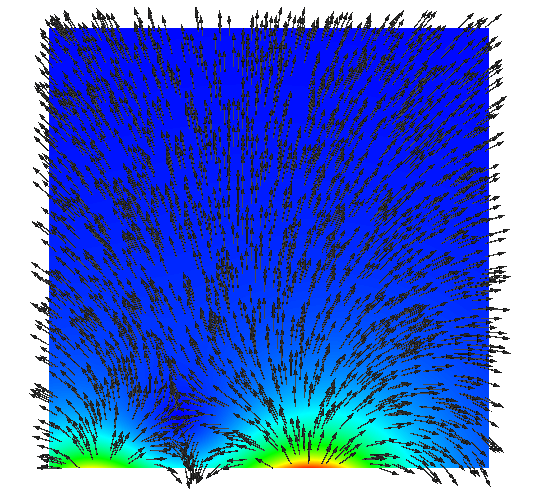}\\
   \includegraphics[width=33mm]{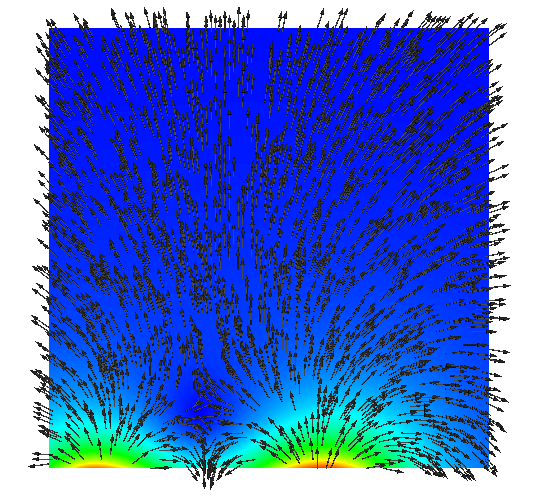}&
   \includegraphics[width=33mm]{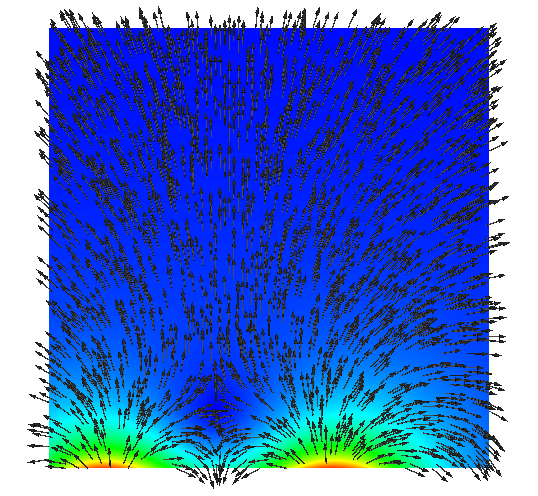}&
   \includegraphics[width=33mm]{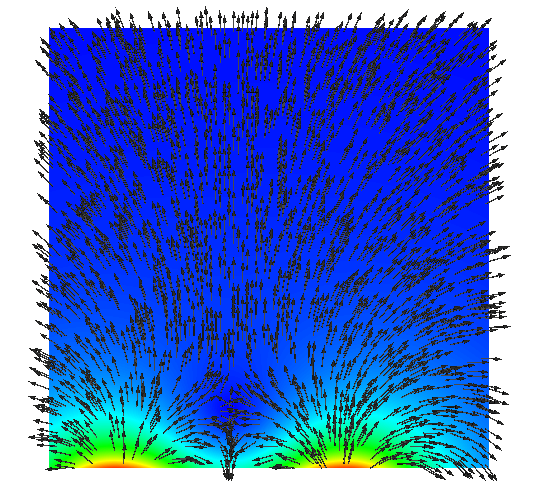}&
   \includegraphics[width=33mm]{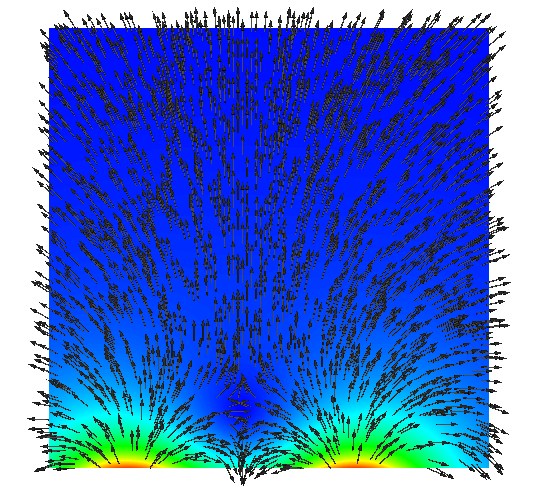}\\
   \includegraphics[width=33mm]{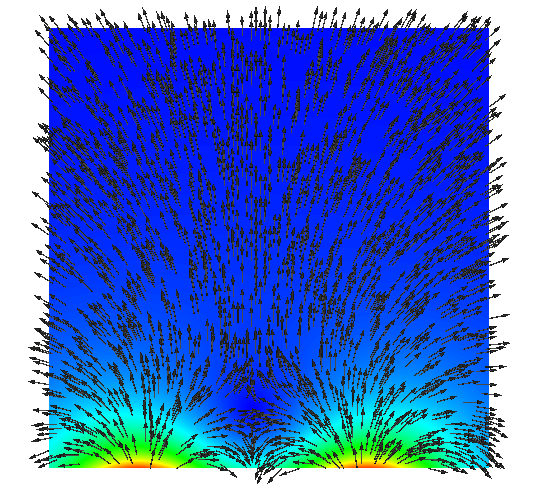}&
   \includegraphics[width=33mm]{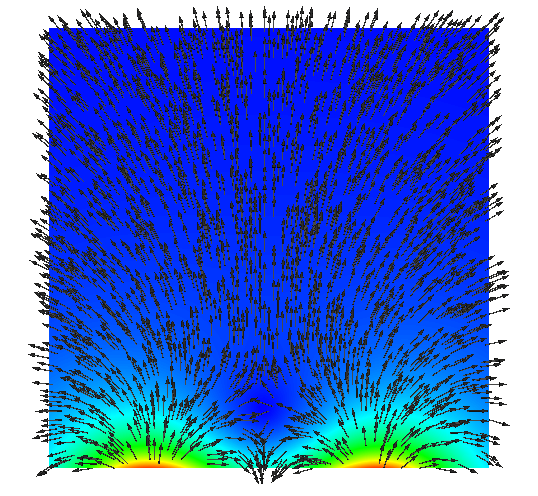}&
   \includegraphics[width=33mm]{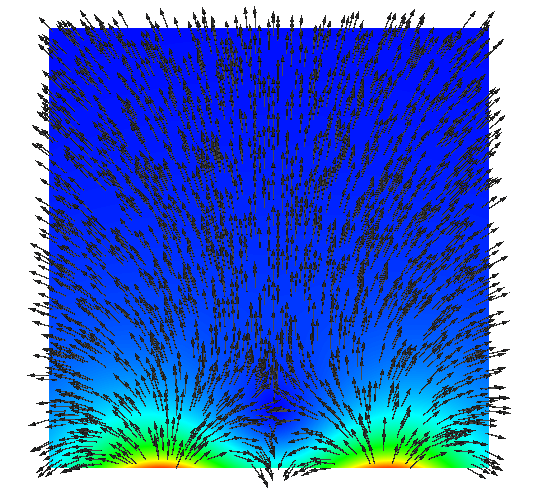}&
   \includegraphics[width=33mm]{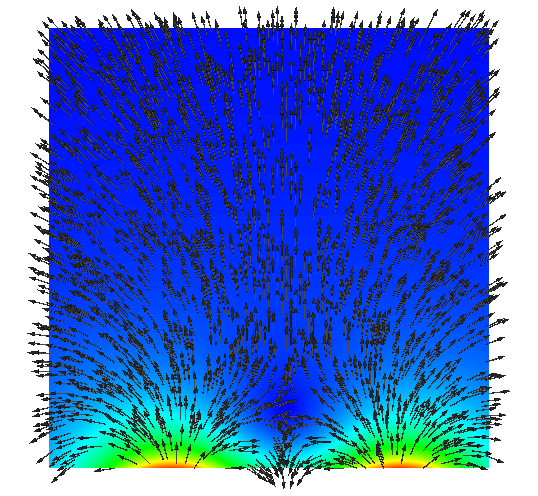}\\
   \includegraphics[width=33mm]{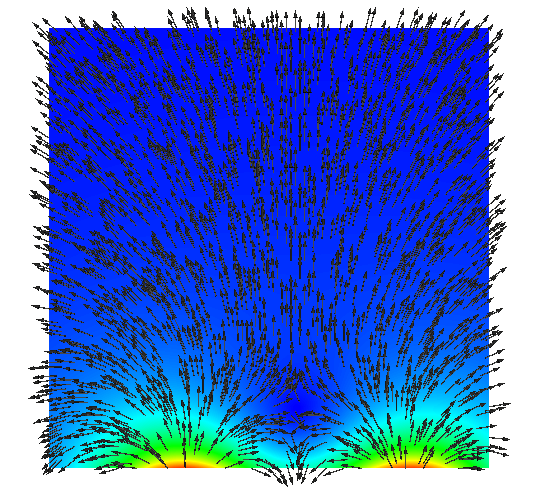}&
   \includegraphics[width=33mm]{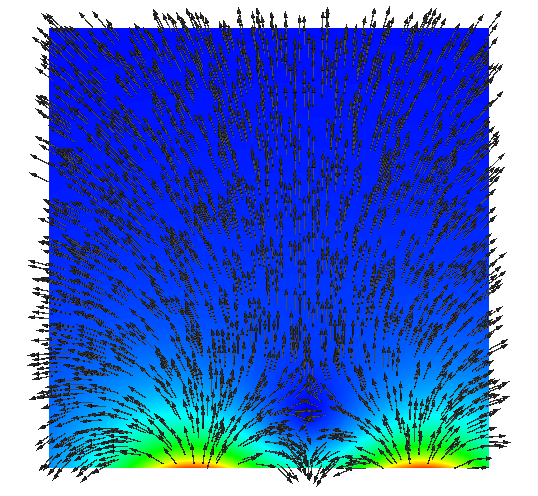}&
   \includegraphics[width=33mm]{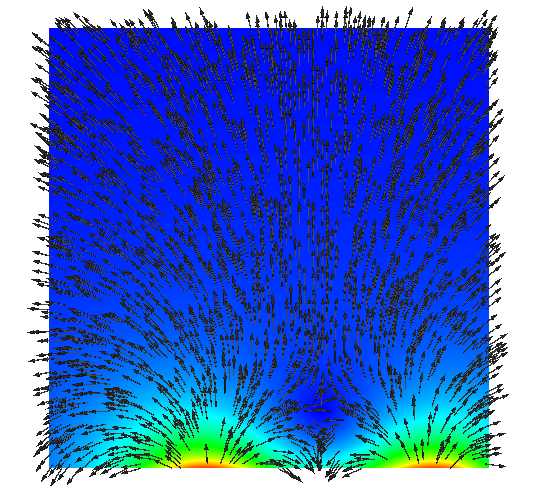}&
   \includegraphics[width=33mm]{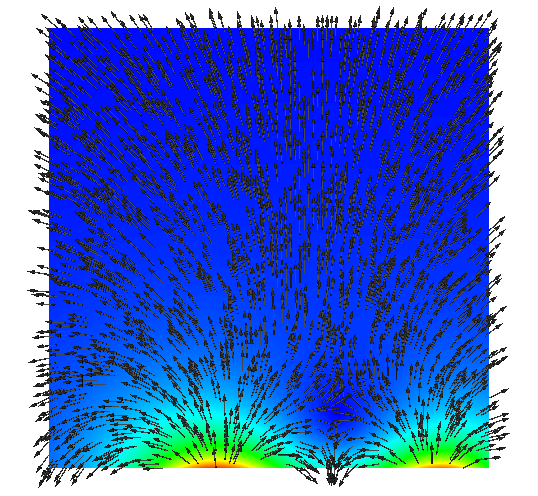}\\
   \includegraphics[width=33mm]{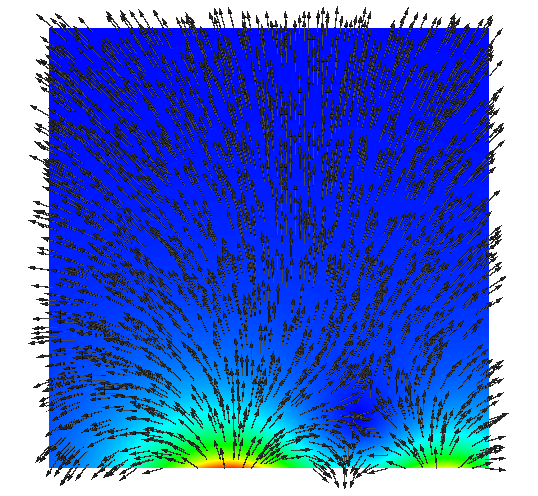}&
   \includegraphics[width=33mm]{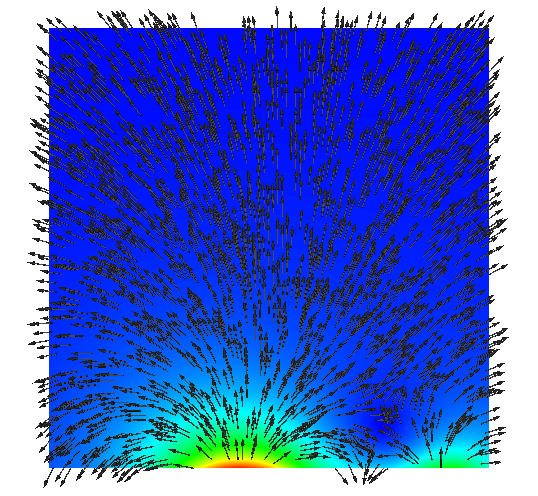}&
   \includegraphics[width=33mm]{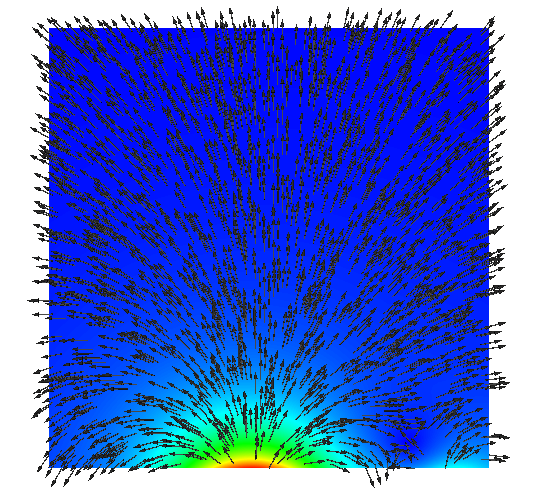}&
   \includegraphics[width=33mm]{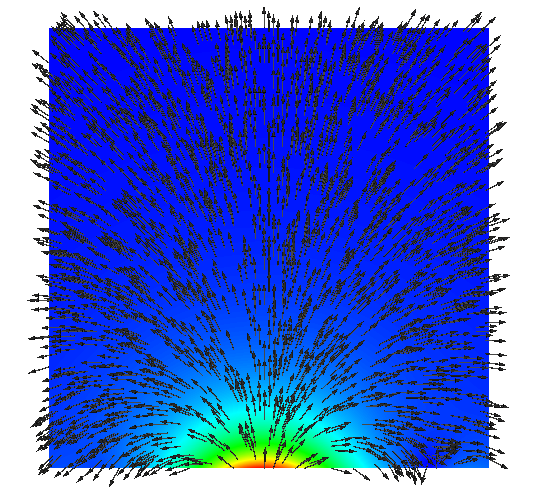}

  \end{tabular}
\end{center}
\caption[Ferromagnetic stirring (second approach): evolution of the magnetization field]{\textbf{Ferromagnetic stirring (second approach): evolution of the magnetization field.} Magnetization vectors and their intensity when using the magnetic field \eqref{travewavemethod}. Here we illustrate a half period of the magnetization profile traveling from left to right. The magnetization is strong in two regions on the lower part of the box. The Kelvin force generated by this magnetization field not only pushes the fluid from left to right, but also creates some effects in the $y$ axis, effectively creating some ripples, which can be appreciated for instance in Figure~\ref{figura15} where streamlines go up and down when they are at the bottom of the box. \label{figura13}} 
\end{figure}

\begin{figure}
\begin{center}
\includegraphics[scale=0.25]{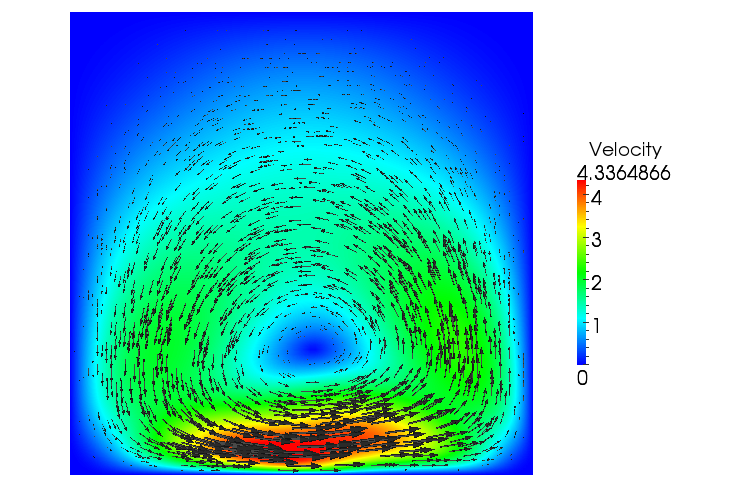}
\end{center}
\caption[Ferromagnetic stirring (second approach): velocity profile]{\textbf{Ferromagnetic stirring (second approach): velocity profile.} Velocity profile induced by the traveling wave of magnetization. This velocity profile gives rise to the evolution of the concentration $c$ in Figure~\ref{figura12}.\label{figura14}}
\end{figure}


\begin{figure}
\begin{center}
  \setlength\fboxsep{0pt}
  \setlength\fboxrule{1pt}
  \begin{tabular}{ccccc}
   \fbox{\includegraphics[width=25mm]{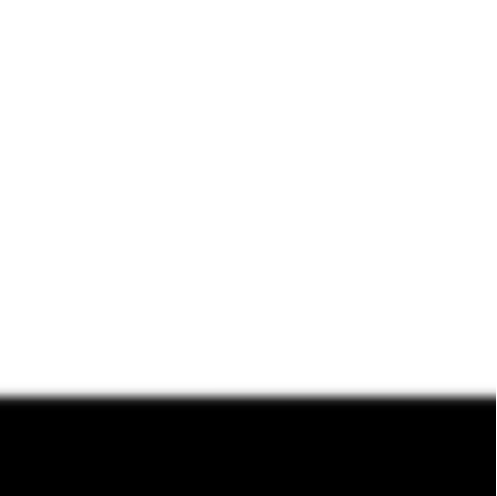}}&
   \fbox{\includegraphics[width=25mm]{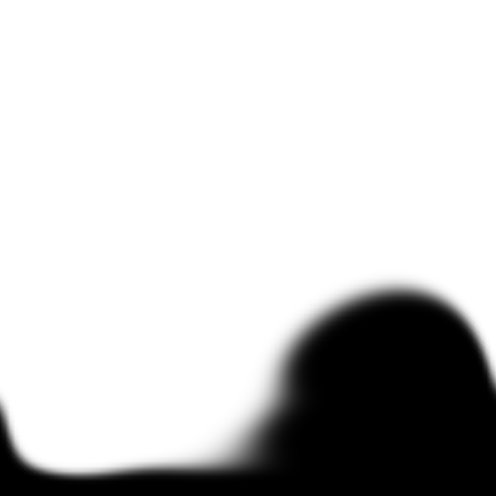}}&
   \fbox{\includegraphics[width=25mm]{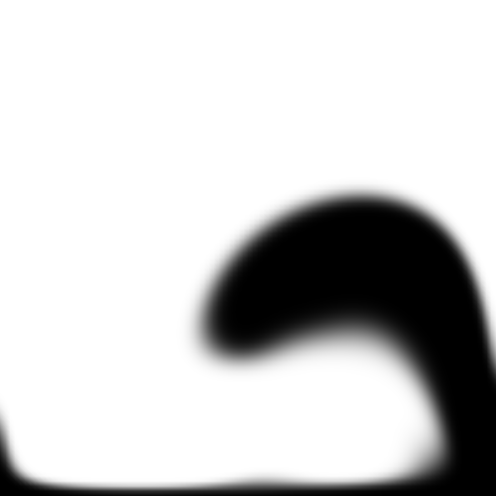}}&
   \fbox{\includegraphics[width=25mm]{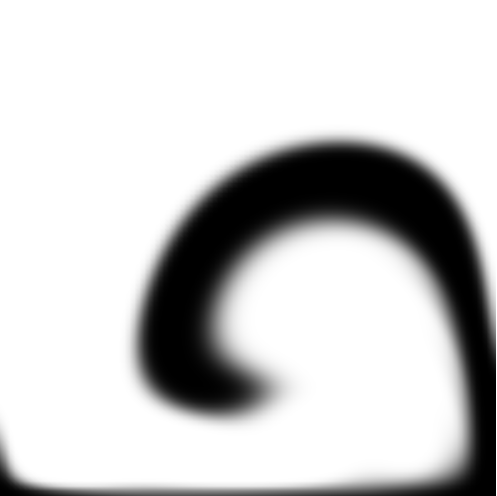}}\\

   \fbox{\includegraphics[width=25mm]{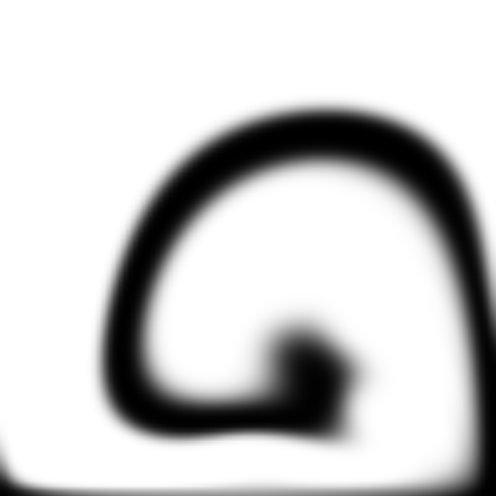}}&
   \fbox{\includegraphics[width=25mm]{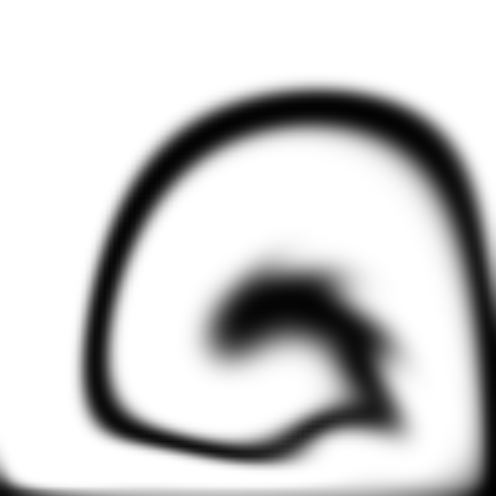}}&
   \fbox{\includegraphics[width=25mm]{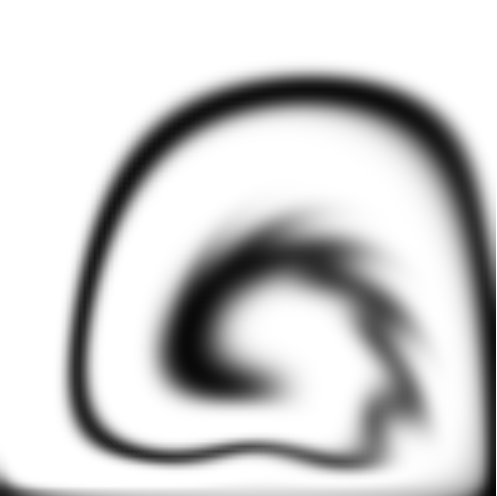}}&
   \fbox{\includegraphics[width=25mm]{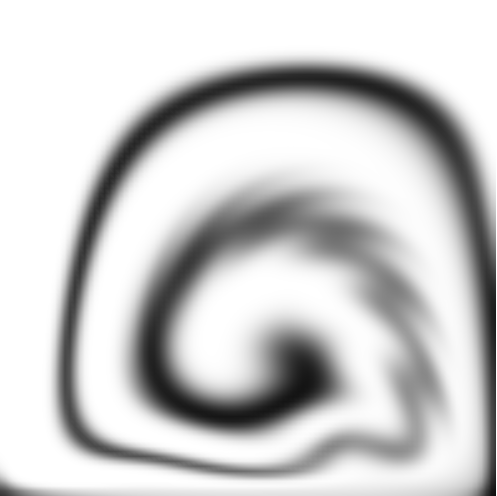}}\\

   \fbox{\includegraphics[width=25mm]{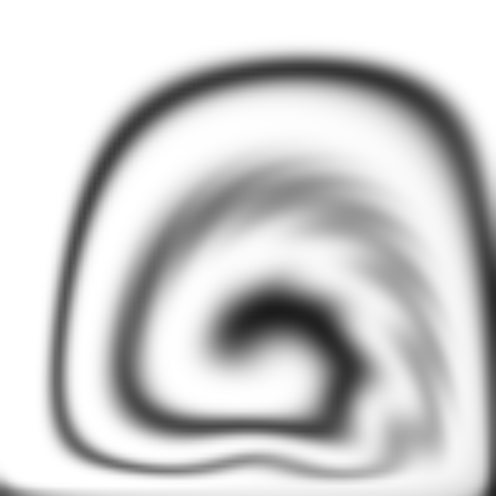}}&
   \fbox{\includegraphics[width=25mm]{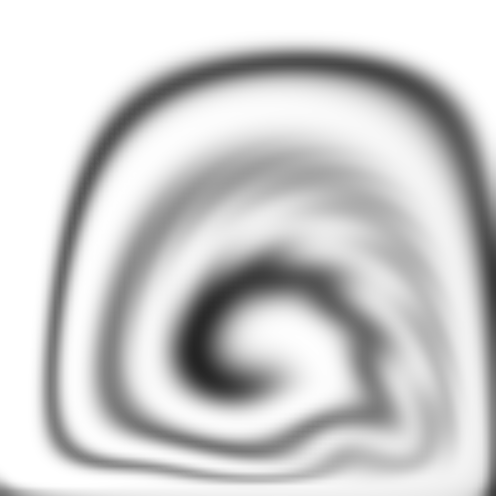}}&
   \fbox{\includegraphics[width=25mm]{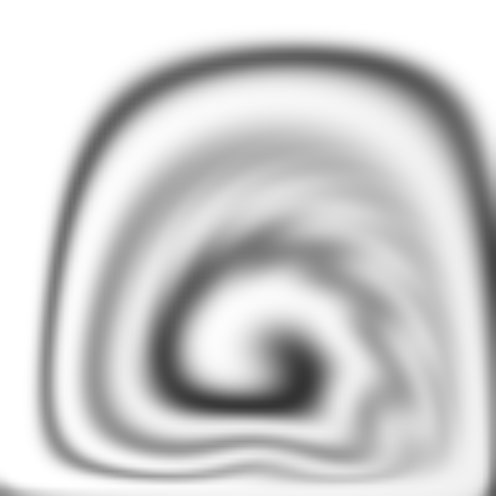}}&
   \fbox{\includegraphics[width=25mm]{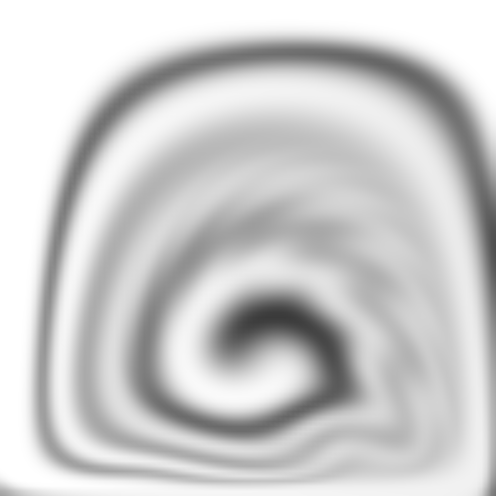}}\\

   \fbox{\includegraphics[width=25mm]{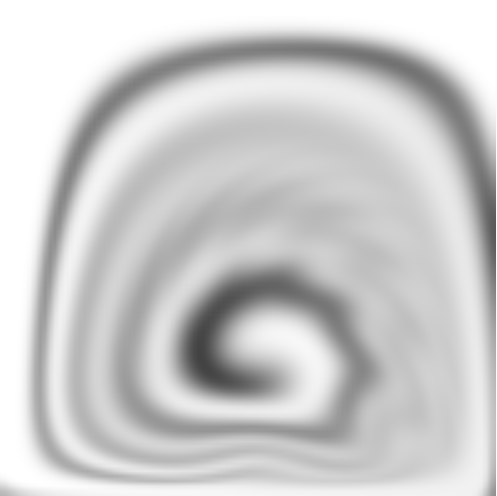}}&
   \fbox{\includegraphics[width=25mm]{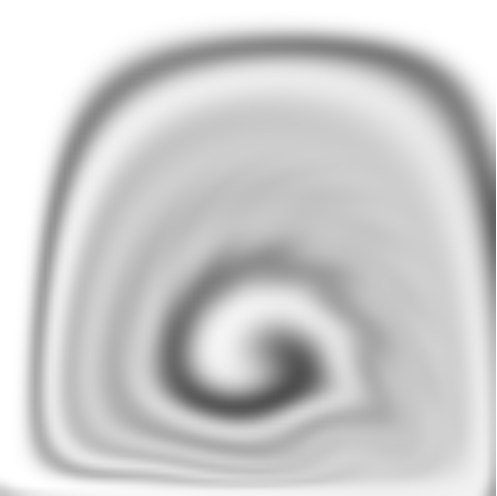}}&
   \fbox{\includegraphics[width=25mm]{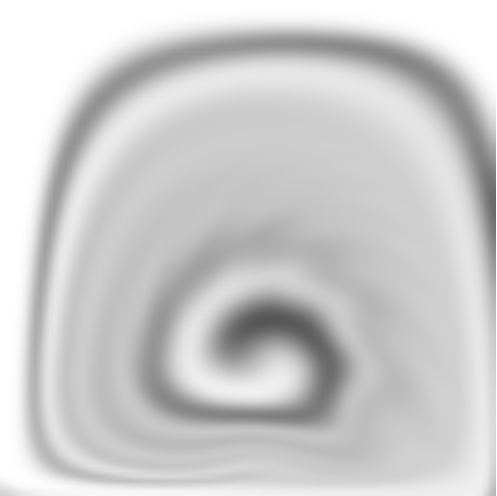}}&
   \fbox{\includegraphics[width=25mm]{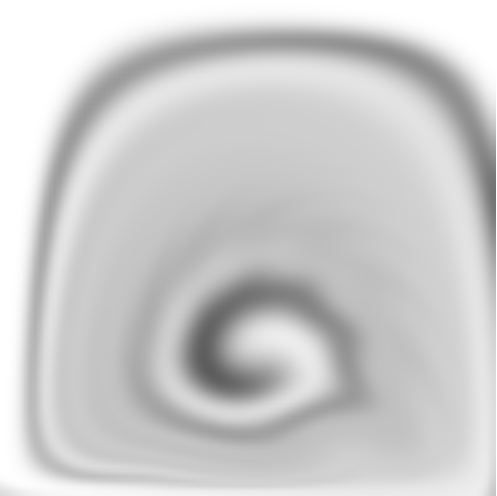}}
  \end{tabular}
\end{center}  
\caption[Ferromagnetic stirring (second approach): evolution of the passive scalar]{\textbf{Ferromagnetic stirring (second approach): evolution of the passive scalar.} Contour of the passive scalar from time $t = 0$ to $t = 4.00s$. The magnetizing field satisfies \eqref{travewavemethod}. These  results where obtained with $f = 40\text{Hz.}$, and $\nu = \nu_r =0.1$. \label{figura15}}
\end{figure}

\section{Conclusions}
\label{sec:conclusions}

In this paper we propose and analyze a numerical scheme for the Rosensweig model of ferrohydrodynamics. The scheme is implicit and we show that, for magnetic diffusion $\magdiff=0$, it is unconditionally stable and that discrete solutions exist. The use of a discontinuous finite element space for the magnetization $\bv{M}$ seems to be mandatory to have a discrete energy law that mimics the continuous one. The scheme delivers the expected convergence rates for smooth solutions. We do show its convergence towards weak solutions under simplifying additional assumptions.

Although not fully understood in the literature, we consider also $\magdiff>0$. The motivation is twofold: adding a regularization to the magnetization equation could be used to obtain global existence of weak solutions. In addition, such regularization enables us to add modeling effects on the boundary. We propose Robin boundary conditions that agree with the tendency of the magnetization to align with the magnetic field. They yield a formal energy estimate which, however, it is hard to reproduce at the discrete level. The main obstruction is the lack of a methodology capable of computing a magnetic field in  $\hcurl \cap \hdiv$ compatible with the energy structure of the system.

We also present some application examples that illustrate the capabilities of the model and the scheme, in particular in the context of ferrofluid pumping and stirring of a passive scalar. These numerical experiments also expose the non-trivial nature of ferrofluids, and how much quantitative tools are needed in order to complement qualitative understanding and experimentation.

Finally, we must comment that many important issues are not discussed. Among them we have to mention how to solve efficiently the system posed by the numerical scheme proposed in this work, modeling of saturation effects (which is a very important physical feature of ferrofluids), and inhomogeneous (e.g. two-phase) ferrofluid flows. We refer to \cite{Tom15} for our first attempt to model and simulate the latter.

\bibliographystyle{siam}
\bibliography{biblio}

\end{document}